\documentclass{amsart}

\usepackage{uconnreu} 
\usetikzlibrary{lindenmayersystems}

\DeclareMathOperator{\vol}{vol}

\title{Geodesic Interpolation on Sierpinski Gaskets}


\author[C. Davis]{Caitlin M. Davis}
\curraddr[C. Davis]{Department of Mathematical Sciences, Lewis \& Clark College, Portland, OR}
\email{caitlindavis@lclark.edu}

\author[L. LeGare]{Laura A. LeGare}
\curraddr[L. LeGare]{Department of Mathematics, University of Notre Dame, Notre Dame, IN}
\email{llegare@nd.edu}

\author[C. McCartan]{Cory W. McCartan}
\curraddr[C. McCartan]{Department of Statistics, Harvard University, Cambridge, MA}
\email{cmccartan@g.harvard.edu}

\author[L.~G.~Rogers]{Luke~G. Rogers}
\curraddr[L.~G.~Rogers]{Department of Mathematics, University of Connecticut, Storrs, CT}
\email{luke.rogers@uconn.edu}

\begin{document}

\begin{abstract}
We study the analogue of a convex interpolant of two sets on Sierpinski gaskets
and an associated notion of measure transport.  The structure of a natural
family of interpolating measures is described and an interpolation
inequality is established.  A key tool is a good description of geodesics on
these gaskets, some results on which have previously appeared in the
literature~\cites{strichartz1999,saltan2016,saltan2018,GuYeXi}.
\end{abstract}

\keywords{Sierpinski gasket, geodesic, geodesic distance, transport, convex interpolation, geodesic interpolation, Brunn-Minkowski inequality, fractal}
\subjclass[2010]{28A80, 39B62, 26D15, 05C12}  

\maketitle

The notion of a convex interpolant $(1-t)A+tB=\{(1-t)a+tb:a\in A, b\in B\}$ for
sets $A,B\subset\mathbb{R}^n$ and $t\in[0,1]$, and the Brunn-Minkowski
inequality $|(1-t)A+tB|^{1/n}\geq (1-t)|A|^{1/n}+t|B|^{1/n}$ for the $n$-volume
$|\cdot|$, have long had a central role in convex geometry.  More recently, the
class of functional inequalities that includes the Brunn-Minkowski inequality
has been used to dramatically extend notions of curvature to more general
settings, and a rich theory has developed around these
advances~\cites{Gardner,Barthe}.

The study of functional inequalities in the setting of fractal metric-measure
spaces is considerably less developed.  One area in which there has been a great
deal of work is in relating  the variation with $\epsilon>0$ of the volume of an
$\epsilon$-neighborhood of a set to the analytic and geometric properties of the
set.  For Euclidean sets with sufficiently smooth boundaries, such results can
be obtained using the Steiner formula and inequalities of Brunn-Minkowski type.
In the case of certain fractal sets and sets with fractal boundary in Euclidean
space, one achievement of the theory developed by Lapidus and collaborators is a
characterization of the volume of $\epsilon$-neighborhoods using complex
dimensions, which in turn are connected to analytic structure on the set through
the zeta function of its Laplacian~\cite{Lapidusbook}.     Functional
inequalities classically associated with curvature are also beginning to be
considered in fractal analytic settings~\cites{BaudoinKelleher,ABCRST3}.

A feature of the preceding work is that it does not generally use convex
interpolation.  Indeed, we are not aware of previous work involving convex
interpolation on fractal sets.  The purpose of this paper is to consider the
elementary notion of convex interpolant in the setting of one well-studied class
of fractals, the Sierpinski gaskets $S_n$ defined on regular $n$-simplices in
$\mathbb{R}^n$.  When endowed with the Euclidean metric restricted to the set,
these examples are geodesic spaces.  Following~\cite{cordero2001} we can
therefore define a convex interpolant $\tilde{Z}_t(A,B)$ which generalizes the Euclidean
notion of $(1-t)A+tB$ by setting
\begin{equation}
	\begin{split}
	\tilde{Z}_t(a,b)&=\{x: d(a,x)=td(a,b) \text{ and } d(x,b)=(1-t)d(a,b)\},\\
	\tilde{Z}_t(A,B)&= \{ Z_t(a,b):a\in A,b\in B\}.
	\end{split}
\end{equation}
Our goal is to study some basic properties of this interpolating set and the
naturally related notion of an interpolating measure on the sets $S_n$.

The study of $\tilde{Z}_t(A,B)$ requires that we have a good understanding of geodesics
in the Sierpinski gasket $S_n$.  These have been studied, for example
in~\cite{strichartz1999} and more recently~\cites{saltan2016,saltan2018,GuYeXi},
but we make some explicit constructions and reprove some fundamental theorems by
methods connected to the barycentric projection in~\cite{strichartz1999} because
they are essential in our treatment of $\tilde{Z}_t$.  The proofs are also, in our view,
simpler than some of those in~\cites{saltan2016,saltan2018,GuYeXi}.  These
results, including some which are new, are in Section~\ref{sec:geodesics}.  Our
study of interpolation occupies
Sections~\ref{sec:interp}--\ref{sec:interp-general}; we first replace  $\tilde{Z}_t(A,B)$ with a slightly simpler but essentially equivalent set $Z_t(A,B)$, then we deal with this set in the case where $A$ is a cell and $B$ a point and when $A$ and $B$ are
disjoint cells.  It is easy to determine that no direct analogue of the
Brunn-Minkowski inequality can hold, but we conclude with one possible
interpolation inequality in Section~\ref{sec:inequality}.

We emphasize to the reader that this study is intentionally limited in scope.
There are so many inequalities and applications of inequalities in this area
that it is not practical to attempt an exhaustive treatment, even when we limit
ourselves to such a  simple class of examples.  Moreover, the naturality of
convex interpolation in our setting is not discussed, and in particular we do
not consider whether the interpolation of measures is an optimizer of a
transport problem.  No doubt each reader will notice problems they think should
perhaps have been considered, and we hope they will be inspired to do so
themselves.


\section{Preliminaries}
\label{sec:prelim}

The \textit{Sierpinski $n$-gasket} $S_n\subseteq\R^n$ is the unique nonempty
compact attractor of the iterated function system (IFS) \[
    F_i : \mathbb{R}^n \to \mathbb{R}^n, \quad
    F_i(x) = \frac{1}{2}(x + q_i),
\] where $i \in \{0,1,\ldots,n\}$, and where $\{q_0,q_1,\ldots,q_n\}$ are the
vertices of an $n$-simplex with sides of unit length.  We begin with some
essential definitions and properties of $S_n$, mostly following the conventions
of Strichartz and coauthors in~\cites{azzam2008,strichartz}.

An \textit{$m$-level cell} of the Sierpinski $n$-gasket is a set of the form
$F_{w_1}\circ F_{w_2} \circ\cdots\circ F_{w_m}(S_n)$. We call the sequence of
letters $w = w_1w_2\cdots w_m$ from the alphabet $\{0,\dotsc,n\}$ a
\textit{finite address} of length $|w|=m$. We identify finite addresses with
cells and use the notation $\br{w}\dfeq F_w(S_n)$. An $(m+k)$-level cell
contained in a given $m$-level cell is called a \textit{level $k$ subcell} of
the $m$-level cell. A given $m$-level cell $\br{w}$ has $(n+1)$ 1-level
subcells, or \textit{maximal subcells}: $\br{w0},\br{w1},\ldots,\br{wn}$. Since
$S_n$ is defined on an $n$-simplex with unit length sides, the side length of an
$m$-level cell is $\qty(\half)^m$.

A strictly descending chain of cells
$S_n\supseteq\br{w_1}\supseteq\br{w_1w_2}\supseteq\cdots$ intersects to a point
with \textit{address} $w_1w_2\cdots$. As with cells, we identify infinite
addresses with points by writing $\br{w_1w_2\cdots}$.  We use an overline to
denote repeating characters in an address, so $20111\cdots=20\bar 1$. In this
notation, we have $q_j=\bar j$ for $j=0,\dotsc,n$.

\subsection{Vertices, address equivalence, and barycentric coordinates}\label{subsec:vertices-address-barycentric}

The \textit{boundary points} of a cell $\br{w}$ are the vertices
$\br{w\bar0},\br{w\bar1},\ldots,\br{w\bar n}$.  The \textit{$m$-level vertex
set} of the $n$-simplex, denoted $V_n^m$, is defined recursively by \[
  V_n^0=\{q_0,q_1, \ldots, q_n\} \qq{and} V_n^m=\bigcup_{i=0}^{n} F_i(V_n^{m-1}). 
\] 
The set of all vertices of $S_n$, denoted $V_n^*$, is defined as
$\bigcup_{m=0}^\infty V_n^m$;  this set is dense in $S_n$.

\begin{figure}[ht]
  \centering
  \begin{tikzpicture}[x=2in,y=\sqTHREE in]
  \PrepSierpinski
  \Sierpinski{7}
  \filldraw (1/2,1) circle (1.5pt) node[align=center,above] {$\br{\bar 0}$};
  \filldraw (0,0) circle (1.5pt) node[below left] {$\br{\bar 1}$};
  \filldraw (1,0) circle (1.5pt) node[below right] {$\br{\bar 2}$};
  
  \filldraw (1/2,0) circle (1.5pt) node[align=center,below] {$\br{2\bar1}=\br{1\bar2}$};
  \filldraw (5/8,3/4) circle (1.5pt) node[above right] {$\br{00\bar2}=\br{02\bar0}$};
  \filldraw (7/32,5/16) circle (1.5pt) node[above left,xshift=-4pt] {$\br{1012\bar0}=\br{1010\bar2}$};
  \end{tikzpicture}
  \caption{$S_2$, with several points labelled.} \label{fig:SG1}
\end{figure}
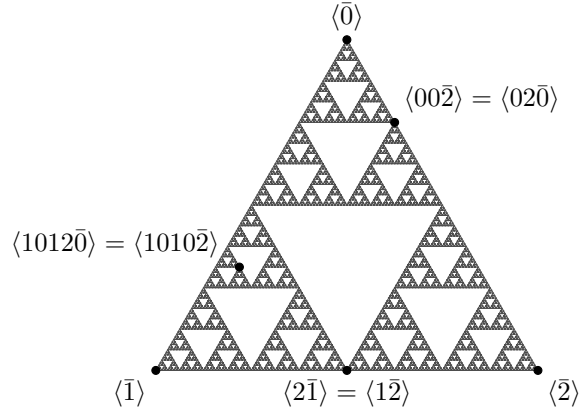

Every vertex sits at the intersection of two neighboring cells, and consequently
can be described by exactly two addresses, which are readily seen to have the
form $wj\bar i$ and $wi\bar j$, where $\br{wi}$ and $\br{wj}$ are the
intersecting cells. Figure \ref{fig:SG1} demonstrates this property in $S_2$ and
illustrates the addressing scheme. Each point in $S_n\setminus V_n^*$ has a
unique address.

In addition to point addresses, we make considerable use of the barycentric
coordinate system on $S_n$.  Recall that the convex hull of
$\{q_0,q_1,\ldots,q_n\}$, which contains $S_n$, consists of the points
\begin{equation}\label{eqn:barycoords}  x=c_0q_0+c_1q_1+ \cdots +c_nq_n,
\end{equation} in which each $c_j\geq0$ and $c_0+c_1+\cdots +c_n=1$.  The $c_j$
are called  the \textit{barycentric coordinates} of $x$, and we denote the
$i$\textsuperscript{th} barycentric coordinate $c_i$ of $x$ by $[x]_i$.

It is useful to consider the dyadic expansions $[x]_i=\sum_{j=1}^\infty c_i^j
2^{-j}$ of the barycentric coordinates, because of the following easy result
that is well known~\cite{Edgar}*{page 10} and will be used frequently throughout
the present work.

\begin{lemma}\label{lem:dyadbary}
A point $x$ is in $S_n$ if and only if there is a dyadic expansion of its
barycentric coordinates with the property that for each $j$ there is a
unique $i\in\{0,\dotsc,n\}$ so that $c_i^j=1$.  In fact,
$x=\br{w_1w_2\cdots}$  if and only if $c_i^j=1$ precisely when $w_j=i$.
\end{lemma}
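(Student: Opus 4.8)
The plan is to reduce the lemma to a single computation describing how the maps $F_i$ act in barycentric coordinates, followed by a passage to the limit. Write $e_i$ for the $i$th standard unit vector, which is the barycentric coordinate vector of $q_i$. Since $F_i$ is affine and $F_i(q_k)=\tfrac12(q_k+q_i)$, affinity gives $[F_i(x)]=\tfrac12\bigl([x]+e_i\bigr)$; equivalently $[F_i(x)]_i=\tfrac12\bigl([x]_i+1\bigr)$ and $[F_i(x)]_k=\tfrac12[x]_k$ for $k\neq i$. Iterating, for a finite address $v=v_1\cdots v_m$ and any point $y$ in the convex hull of $\{q_0,\dots,q_n\}$ one obtains, by an easy induction on $m$, that $[F_v(y)]=2^{-m}[y]+\sum_{j=1}^{m}2^{-j}e_{v_j}$.

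Next I would let $m\to\infty$. For an infinite address $w=w_1w_2\cdots$ the point $\br{w}$ lies in every cell $\br{w_1\cdots w_m}=F_{w_1\cdots w_m}(S_n)$, a set of diameter $2^{-m}\operatorname{diam}(S_n)$; hence $F_{w_1\cdots w_m}(y)\to\br{w}$ for any fixed $y\in S_n$, and since the barycentric coordinate maps are continuous, the formula above yields $[\br{w}]_i=\sum_{j:w_j=i}2^{-j}$ for each $i$. Setting $c_i^j=1$ when $w_j=i$ and $c_i^j=0$ otherwise, this says exactly that $(c_i^j)_j$ is a dyadic expansion of $[\br{w}]_i$ in which, for each $j$, precisely one index $i$ (namely $i=w_j$) has $c_i^j=1$. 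Together with the fact recalled in the preliminaries that every point of $S_n$ has an address, this gives the ``only if'' direction of the equivalence and one direction of the ``in fact'' assertion.

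For the converse, I would suppose $x$ lies in the convex hull and that its barycentric coordinates admit dyadic expansions $[x]_i=\sum_j c_i^j2^{-j}$ such that for each $j$ exactly one index, call it $w_j$, satisfies $c_i^j=1$. Then $[x]_i=\sum_{j:w_j=i}2^{-j}=[\br{w}]_i$ for the address $w=w_1w_2\cdots$, and since a point of the convex hull is determined by its barycentric coordinates, $x=\br{w}\in S_n$; this delivers the ``if'' direction and the remaining direction of the ``in fact'' statement. (A point outside the convex hull has some $[x]_i\notin[0,1]$, which admits no expansion with digits in $\{0,1\}$, consistent with $x\notin S_n$.)

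I expect the only real subtlety to be the bookkeeping around dyadic expansions: these are not unique, so the lemma is a genuinely existential statement and one must avoid speaking of ``the'' binary expansion of a coordinate. For example the vertex $\br{2\bar1}=\br{1\bar2}$ in $S_2$ has barycentric coordinate vector $\bigl(0,\tfrac12,\tfrac12\bigr)$, whose two admissible expansions correspond exactly to its two addresses. Beyond keeping this straight, the argument is routine affine algebra together with the shrinking-diameter limit.
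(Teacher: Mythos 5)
Your proof is correct and takes essentially the same route as the paper's: its two-line argument (``points in the $1$-cell $\br{i}$ have $c_i^1=1$ and all other $c_k^1=0$; the result then follows by self-similarity and induction'') is exactly your computation $[F_i(x)]=\tfrac12([x]+e_i)$, iterated and passed to the limit. You have simply written out the induction, the shrinking-diameter limit, and the converse direction in full detail, and your caution about non-uniqueness of dyadic expansions matches the remark the paper makes immediately after the lemma.
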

\begin{proof}
Observe that points in the $1$-cell $\br{i}$ have $c_i^1=1$ and all other
$c_k^1=0$.  The result then follows by self-similarity and induction.
\end{proof}

\begin{remark}
Points in $V_n^*$ are those
for which each $c_i$ is a dyadic rational, and the two addresses for a vertex
correspond to the two (nonterminating) binary representations of the vertex.
For example, the vertex $\br{1\bar0}=\br{0\bar1}=(\frac{1}{2}, \frac{1}{2},0)$ 
in $S_2$ can be expressed in binary as either $(0.1, 0.0\bar{1}, 0)$ or
$(0.0\bar{1}, 0.1, 0)$; it may also be represented as $(0.1,0.1,0)$, but
this  latter fails to satisfy the condition that exactly one of the
$c_j^k=1$ for a given value of $k$.
\end{remark}

\subsection{Self-similar measure}\label{subsec:measures}

A natural class of measures on $S_n$ are the \textit{self-similar measures}.  A
measure $\mu_n$ of this type is a probability measure determined uniquely from a
set of weights $\{\mu_n^i\}_{i=0}^n$, where each $\mu_n^i>0$ and
$\sum_i\mu_n^i=1$, by the requirement that  for any measurable $X\subseteq S_n$
one has the self-similar identity
\[   \mu_n(X) = \sum_i\mu_n^i\mu_n(F_i^{-1}(X)).\]
The existence and uniqueness of such measures is due to Hutchinson~\cite{hutchinson1981}.

The \textit{standard measure}  on $S_n$ is self-similar with weights
$\mu_n^i=\onpo$ for all $i \in \{0,1,\ldots,n\}$.  The standard measure of any 
$m$-level cell is $(\onpo)^m$.

When we study interpolation in $S_n$, the interpolant sets will be determined by projections of 
the original sets.  The measure on interpolant sets will be a projection of the original 
self-similar measure, and will therefore be self-similar itself. This is a
consequence of the following results, which are well-known when each $F_i$ is a
homothety, as it is here.

\begin{lemma} \label{prop:general-IFS}
    Let $q_0,q_1, \ldots, q_n \in \R^m$, and consider the IFS $\{F_i:\R^m\to
    \R^m\}$ with $F_i(x)=\frac{1}{2}(x+q_i)$. Fix $\vec{v}$ a unit vector and 
    define $\phi:\R^m\to\R$ by $x\mapsto \langle x,v\rangle$. Then, defining 
    $\tilde F_i\dfeq\phi\circ F_i\circ\phi^{-1}$, we have $\tilde F_i(x)=\half
    (x+\phi(q_i))$ for $x\in\R$.
\end{lemma}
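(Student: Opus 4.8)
The plan is first to make sense of the composition $\phi\circ F_i\circ\phi^{-1}$, since $\phi$ is surjective onto $\R$ but very far from injective, so $\phi^{-1}$ cannot be a literal inverse. I read it as any right inverse (section) $s\colon\R\to\R^m$, i.e.\ a map with $\phi\circ s=\mathrm{id}_\R$; the canonical choice is the linear section $s(t)=tv$, for which $\phi(s(t))=\langle tv,v\rangle=t|v|^2=t$ because $v$ is a unit vector. I would begin by recording that $\tilde F_i$ does not depend on this choice: any two sections differ by a map taking values in $\ker\phi$, and since $F_i$ is affine with linear part $\half\,\mathrm{id}$ it sends each coset of $\ker\phi$ into a coset of $\ker\phi$, so $\phi\circ F_i$ takes the same value on all sections of a given point.

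With $s(t)=tv$ fixed, the second step is the one-line computation: for $t\in\R$,
\[
\tilde F_i(t)=\phi\bigl(F_i(tv)\bigr)=\phi\Bigl(\half(tv+q_i)\Bigr)=\half\bigl(t\langle v,v\rangle+\langle q_i,v\rangle\bigr)=\half\bigl(t+\phi(q_i)\bigr),
\]
using bilinearity of the inner product and $|v|=1$ in the last two equalities; this is exactly the asserted formula. The same bilinearity calculation, applied to a general $x\in\R^m$ rather than to $s(t)$, gives the intertwining identity $\phi\circ F_i=\tilde F_i\circ\phi$ on all of $\R^m$.

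There is essentially no obstacle here; the only thing that requires care is the bookkeeping around well-definedness of $\phi\circ F_i\circ\phi^{-1}$, and this is handled by the coset remark above. I would close with a short remark making the point that motivates the lemma: $\{\tilde F_i\}$ is again an IFS of homotheties on $\R$, with common contraction ratio $\half$ and new fixed points $\phi(q_i)$, so by uniqueness of attractors the identity $\phi\circ F_i=\tilde F_i\circ\phi$ forces the pushforward under $\phi$ of the attractor of $\{F_i\}$ to be the attractor of $\{\tilde F_i\}$, and likewise the pushforward under $\phi$ of any self-similar measure for $\{F_i\}$ to be the self-similar measure for $\{\tilde F_i\}$ with the same weights. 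That is precisely the fact used later when the interpolant measures are produced as projections of the standard measure on $S_n$.
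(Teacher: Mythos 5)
Your proof is correct and follows essentially the same route as the paper: both arguments pick a preimage $y$ of $x$ under $\phi$ (your linear section $s(t)=tv$ being one such choice) and use linearity of $\phi$ to compute $\phi(F_i(y))=\tfrac12(\phi(y)+\phi(q_i))=\tfrac12(x+\phi(q_i))$, which simultaneously shows the value is independent of the chosen preimage. Your closing remark about attractors and pushforward measures is not part of this lemma's proof in the paper but is exactly the content of the subsequent Proposition on pushforward measures.
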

\begin{proof}
The map $\phi$ is linear, so if $y$ satisfies $\phi(y)=x$, then 
\begin{equation*}
  \phi\circ F_i (y)
  =\phi\qty(\frac{1}{2}(y + q_i)) 
  = \frac{1}{2}(\phi(y) + \phi(q_i)) 
  = \frac{1}{2}(x\vec{v} + \phi(q_i)).\qedhere
\end{equation*}
\end{proof}

\begin{prop}\label{prop:general-pushforward}
In the setting of Lemma~\ref{prop:general-IFS}, let $K$ denote the attractor of
    the IFS and $\mu$ be the self-similar measure on $K$ with weights $\mu^i$.
    Then the pushforward measure $\phi_*\mu$ satisfies the self-similar identity 
\[
  \phi_*\mu(X) = \sum_i \mu^i  \phi_*\mu(\tilde F_i^{-1}(X))
\]
for measurable $X\subseteq \phi(K)$.
\end{prop}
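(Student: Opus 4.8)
The plan is to reduce the proposition to the intertwining relation $\phi\circ F_i=\tilde F_i\circ\phi$, which is precisely what Lemma~\ref{prop:general-IFS} supplies once its conclusion is read correctly. Indeed, for any $y\in\mathbb{R}^m$, writing $x=\phi(y)$ the lemma gives $\phi(F_i(y))=\tfrac12(\phi(y)+\phi(q_i))=\tilde F_i(\phi(y))$; this is also what makes the definition $\tilde F_i=\phi\circ F_i\circ\phi^{-1}$ unambiguous despite $\phi$ failing to be injective, since $\phi\circ F_i$ depends on $y$ only through $\phi(y)$. So $\phi$ semiconjugates each $F_i$ to $\tilde F_i$, and the homothety hypothesis guarantees that the $\tilde F_i$ are again contractions of the expected form on $\mathbb{R}$, whose attractor is $\phi(K)$.

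Granting the intertwining relation, the rest is a short chain of set-theoretic identities together with the self-similar identity for $\mu$. First I would extend $\mu$ to a Borel measure on all of $\mathbb{R}^m$ with $\mu(\mathbb{R}^m\setminus K)=0$, which costs nothing and lets the self-similar identity $\mu(Y)=\sum_i\mu^i\,\mu(F_i^{-1}(Y))$ be used for any Borel $Y\subseteq\mathbb{R}^m$; since $\phi$ is continuous, $\phi^{-1}(X)$ is Borel whenever $X$ is. Next, from $\phi\circ F_i=\tilde F_i\circ\phi$ one gets $F_i^{-1}(\phi^{-1}(X))=(\phi\circ F_i)^{-1}(X)=(\tilde F_i\circ\phi)^{-1}(X)=\phi^{-1}(\tilde F_i^{-1}(X))$, so applying $\mu$ yields $\mu\bigl(F_i^{-1}(\phi^{-1}(X))\bigr)=\phi_*\mu(\tilde F_i^{-1}(X))$. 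Finally, multiplying by $\mu^i$, summing over $i$, and invoking the self-similar identity for $\mu$ with $Y=\phi^{-1}(X)$ gives
\[
\sum_i\mu^i\,\phi_*\mu(\tilde F_i^{-1}(X))=\sum_i\mu^i\,\mu\bigl(F_i^{-1}(\phi^{-1}(X))\bigr)=\mu\bigl(\phi^{-1}(X)\bigr)=\phi_*\mu(X),
\]
which is the asserted identity.

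The only point demanding any care, and it is minor, is the mismatch between the hypothesis $X\subseteq\phi(K)$ and the fact that $\phi^{-1}(X)$ may contain points of $\mathbb{R}^m\setminus K$; this is dispatched exactly by the convention above, since then $\mu(\phi^{-1}(X))=\mu(\phi^{-1}(X)\cap K)$ and the extra mass is zero. I do not expect a substantive obstacle: the proposition is essentially the observation that a semiconjugacy of iterated function systems pushes a self-similar measure forward to a self-similar measure for the target system, the weights being preserved, and all of the real content — that the pushed-forward maps $\tilde F_i$ are again homotheties with the same contraction ratio — has already been extracted in Lemma~\ref{prop:general-IFS}.
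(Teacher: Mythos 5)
Your proof is correct and follows essentially the same route as the paper's: both rest on the intertwining relation $\phi\circ F_i=\tilde F_i\circ\phi$ from Lemma~\ref{prop:general-IFS}, which gives $F_i^{-1}(\phi^{-1}(X))=\phi^{-1}(\tilde F_i^{-1}(X))$, and then apply the self-similar identity for $\mu$ to $\phi^{-1}(X)$. Your additional remark about extending $\mu$ by zero off $K$ is a harmless bookkeeping point the paper leaves implicit.
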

\begin{proof}
By the definition of the pushforward measure and the self-similarity of $\mu$, 
\begin{equation*}
\phi_*\mu(X) = \mu(\phi^{-1}(X)) 
= \sum_i \mu^i \mu(F_i^{-1}(\phi^{-1}(X))).
\end{equation*}
However $F_i^{-1}\circ \phi^{-1}(X)=\phi^{-1}\circ \tilde{F}_i^{-1}(X)$, because, using Lemma~\ref{prop:general-IFS},
\[ \phi\circ F_i(y)=\phi(\frac12(y+q_i))=\frac12(\phi(y)+\phi(q_i))= \tilde{F}_i\circ\phi(y).\]
Thus
\begin{equation*}
  \phi_*\mu(X) 
  = \sum_i \mu^i\, \mu(\phi^{-1}\circ\tilde F_i^{-1}(X)) 
  = \sum_i \mu^i\, \phi_*\mu(\tilde F_i^{-1}(X)). \qedhere
\end{equation*}
\end{proof}


\section{Geodesics}
\label{sec:geodesics}

Our goal in this section is to relate barycentric coordinates to distance, and to characterize nonuniqueness of geodesics in $S_n$.
We begin by considering geodesics from a point to a boundary point, and then generalize to 
arbitrary points in $S_n$. We prove that there exist at most five distinct geodesics between any two points in $S_2$, and at most eight geodesics between any two points in $S_n$, $n\geq3$.

Let $x,y\in S_n$. A \textit{path} from $x$ to $y$ in $S_n$ is a continuous function $\gamma:[0,1]\to S_n$ 
such that $\gamma(0)=x$ and $\gamma(1)=y$. We say that $\gamma$ \textit{passes through} a point $z$ if 
for some $t\in(0,1)$ we have $\gamma(t)=z$.  The length of a path $\gamma$, given by $\H^1(\gamma([0,1]))$, 
is denoted $|\gamma|$; a priori it may be infinite, but we will only be interested in finite paths.  To avoid the usual problem of distinguishing $\gamma$ from its image we will always assume $\gamma$ is parametrized at constant speed, so $\Bigl|\frac{\partial \H^1(\gamma([0,t]))}{\partial t}\Bigr|=|\gamma|$ for a.e.-$t$, unless some other parametrization is specified.

It is easy to see that there is always a finite path between any $x$ and $y$.  We then define the \textit{intrinsic metric}, $d:S_n\times S_n\to\R$  by 
\[	d(x, y) = \inf\{|\gamma|:\gamma\text{ a path from $x$ to $y$}\}.
\] 
This was previously investigated for the case of $S_2$ in~\cite[Section~8]{strichartz1999}, and later in~\cites{cristea2013, saltan2018}. In particular, the question of the existence of minimizing paths, or \textit{geodesics} has been considered in this case.  Strichartz~\cite{strichartz1999} used barycentric coordinates to give a simple construction of geodesics; we follow his method, but correct his statement that the  maximum number of geodesics between an arbitrary pair of points is four rather than five.  Saltan et al.~\cite{saltan2018}   present a formula for $d$ in terms of address representations of $x$ and $y$, and obtain the correct value for the maximum number of geodesics between $x$ and $y$, but their approach is somewhat complicated and is only done for $n=2$.  This result was generalized in~\cite{GuYeXi} to the case $n\geq3$.  The rest of this section presents an alternative approach to these results for $S_n$, $n\geq3$ and fixes notation that will be needed in our study of geodesic interpolation.

We begin by studying the problem of connecting a boundary point $x$ of a cell to a point $y$ in that cell by a geodesic.  The following lemma proves that if such a geodesic exists it must lie in the cell.

\begin{lemma}\label{lem:geostaysincell}
Two boundary points of a cell $\br{w}$ are joined by a unique geodesic, namely the line segment between them.  If $x$ is a boundary point of a cell $\br{w}$ and $y\in\br{w}$ then for any path between $x$ and $y$ that is not contained in $\br{w}$ there is a strictly shorter path which is contained in $\br{w}$.
\end{lemma}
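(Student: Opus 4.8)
The plan is to prove the two assertions separately, with the second being the substantive one. For the first assertion, let $p=\br{w\bar\imath}$ and $q=\br{w\bar\jmath}$ be two boundary points of $\br w$. The line segment $[p,q]$ is an edge of the simplex $F_w(\Delta)$, and I claim it lies entirely in $\br w$: this follows from Lemma~\ref{lem:dyadbary} by self-similarity, since after applying $F_w^{-1}$ we are looking at the segment between two vertices $q_i,q_j$ of the base simplex, and the points $c\,q_i+(1-c)q_j$ with $c$ dyadic have barycentric expansions using only the digits $i$ and $j$, hence lie in $S_n$ (the dyadic points are dense and $S_n$ is closed, so the whole segment is in $S_n$). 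Since any path in $\mathbb R^n$ between $p$ and $q$ has length at least $|p-q|=\H^1([p,q])$, with equality only for the segment itself, the segment is the unique geodesic.

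For the second assertion, suppose $\gamma$ is a path from $x\in\br w$ (a boundary point) to $y\in\br w$ that is not contained in $\br w$. The key geometric fact is that $\br w$ is a closed set whose topological boundary within $\mathbb R^n$ is contained in the union of the $(n+1)$ edges of the simplex $F_w(\Delta)$ — more precisely, a path can only leave and re-enter $\br w$ through the boundary vertices $\br{w\bar0},\dots,\br{w\bar n}$, since those are the only points of $\br w$ that also belong to the closure of $S_n\setminus\br w$. (This is where the gasket structure is used: distinct maximal subcells of $S_n$, and more generally $\br w$ and its complement, meet only at vertices.) So the maximal open subintervals of $[0,1]$ on which $\gamma$ lies outside $\br w$ have endpoints mapping to boundary vertices of $\br w$. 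On each such subinterval, say $\gamma$ runs from vertex $v_a$ to vertex $v_b$ (both among the $\br{w\bar k}$), replace that portion of $\gamma$ by the straight segment $[v_a,v_b]$, which by the first assertion lies in $\br w$ and has length $|v_a-v_b|\le \H^1(\gamma\restriction[\text{subinterval}])$. If $v_a=v_b$ we simply delete the excursion. Doing this for every excursion yields a path $\gamma'$ contained in $\br w$ with $|\gamma'|\le|\gamma|$. To get \emph{strict} inequality: since $\gamma$ is not contained in $\br w$, at least one excursion is nontrivial, i.e. $\gamma$ actually leaves $\br w$ on some subinterval of positive length; on that subinterval $\gamma$ is not the straight segment between the two boundary vertices (a straight segment between boundary vertices stays in $\br w$), so the strict case of the triangle inequality in $\mathbb R^n$ gives $|v_a-v_b|<\H^1(\gamma\restriction[\text{subinterval}])$, and hence $|\gamma'|<|\gamma|$.

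The main obstacle is making rigorous the claim that a path exits and re-enters $\br w$ only through its boundary vertices, and that the ``excursion'' decomposition is well-defined. One has to handle the possibility of infinitely many excursions and the fact that $\{t:\gamma(t)\notin\br w\}$ is merely open, so it is a countable disjoint union of open intervals $\bigcup_k(s_k,t_k)$ with $\gamma(s_k),\gamma(t_k)\in\br w\cap\overline{S_n\setminus\br w}$; the latter intersection is exactly the finite set of boundary vertices because in the Sierpinski gasket a cell meets the rest of the gasket only in its boundary points (an easy consequence of the addressing: a point of $\br w$ lying in another cell $\br u$ with $u$ not a prefix of $w$ must be a vertex, one of whose two addresses begins with $w$). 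Summability of the replaced lengths is not an issue since $\sum_k|\gamma(s_k)-\gamma(t_k)|\le\sum_k\H^1(\gamma\restriction(s_k,t_k))\le|\gamma|<\infty$, and one checks $\gamma'$ is still a continuous path (constant speed reparametrization at the end). Once these point-set details are in place, the length comparison and the strictness argument are exactly the elementary Euclidean estimates sketched above.
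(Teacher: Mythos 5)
Your proof is correct and follows essentially the same route as the paper's: the paper likewise observes that the segment between two boundary points lies in $\br{w}$ and is the unique Euclidean geodesic, and then shortens an arbitrary path by deleting each excursion that returns to the same boundary vertex and replacing each excursion between distinct boundary vertices by the connecting segment. You simply carry out the point-set bookkeeping (that a path can exit and re-enter $\br{w}$ only through the finitely many boundary vertices, and that the excursion set decomposes into countably many open intervals with summable replacements) more explicitly than the paper, which states the same argument for a single excursion and leaves these details implicit.
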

\begin{proof}
The first statement is an obvious consequence of the fact  that the line segment between two boundary points of $\br{w}$ is contained in $\br{w}$ and is a Euclidean geodesic.  The second uses the following observation: if $\gamma$ is a path from $x$ to $y$ that exits $\br{w}$ at a boundary point $z$ then either $\gamma$ re-enters at $z$ in which case it can be shortened by removing the intervening component or it re-enters at another boundary point $\tilde{z}$, in which case it can be shortened by replacing the intervening component by the line segment from $z$ to $\tilde{z}$, as the latter is geodesic.
\end{proof}

This lemma suggests a substantial reduction of the problem. We fix some notation.
\begin{defn}\label{def:bridgepoints}
For distinct points $x$ and $y$, the unique smallest cell that contains both is called the \textit{common cell} of $x$ and $y$.
If $\br{w}$ is a cell, the intersection points of its $n+1$ maximal
subcells, which have addresses $\br{wi\bar j}$ for all pairs $i,j$, are called the \textit{bridge points} of $\br{w}$.
\end{defn}

It is apparent that if $x$ is the boundary point $\br{w\bar{i}}$ of the common cell $\br{w}$ of $x$ and $y$ then $y$ is in a different maximal subcell $\br{wj}$ than $x$, so $j\neq i$.  Any path from $x$ to $y$ must pass through a boundary point of $\br{wj}$, and such have the form $\br{wj\bar{k}}$.  The next lemma shows that we may assume $k=i$.

\begin{lemma} \label{geolemma}
Let  $x=\br{w\bar i}$ and $y\in\br{wj}$  with $j\neq i$. For any path $\gamma$ from $x$ to $y$ there is a path of shorter or equal length that enters $\br{wj}$ through the bridge point $\br{wi\bar j}=\br{wj\bar{i}}$.
\end{lemma}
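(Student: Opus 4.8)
The plan is to reduce to paths inside $\br w$, pinpoint where such a path first meets $\br{wj}$, and then either conclude immediately or cut out an initial arc and splice in two explicit segments. First, since $x=\br{w\bar i}=\br{wi\bar i}$ is a boundary point of $\br w$ and $y\in\br w$, Lemma~\ref{lem:geostaysincell} lets us assume $\gamma\subseteq\br w$. Put $t^*=\min\{t:\gamma(t)\in\br{wj}\}$ and $p=\gamma(t^*)$: the minimum is attained since $\br{wj}$ is closed and contains $y$, and $t^*>0$ since $\br{wi\bar i}\notin\br{wj}$ (these subcells meet only at $\br{wi\bar j}$ and $i\ne j$). Because $\gamma([0,t^*))$ misses $\br{wj}$ while $\br w\setminus\br{wj}\subseteq\bigcup_{m\ne j}\br{wm}$, a closed set, the point $p$ lies in $\br{wj}\cap\bigcup_{m\ne j}\br{wm}$; the only points of $\br{wj}$ lying in another maximal subcell are its bridge points, so $p=\br{wj\bar k}$ for some $k\ne j$. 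If $k=i$ then $\gamma$ already enters $\br{wj}$ through $\br{wi\bar j}$ and there is nothing to prove, so from now on $i$, $j$, $k$ are distinct.

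The surgery replaces $\gamma|_{[0,t^*]}$ by the concatenation of $\sigma_1$ and $\sigma_2$, where $\sigma_1$ is the segment from $\br{w\bar i}$ to the bridge point $\br{wi\bar j}$ --- equal to $F_{wi}([q_i,q_j])$, hence contained in $\br{wi}$ --- and $\sigma_2$ is the segment from $\br{wi\bar j}=\br{wj\bar i}$ to $p=\br{wj\bar k}$ --- equal to $F_{wj}([q_i,q_k])$, hence contained in $\br{wj}$. Each is the image of an edge of the base simplex under an $F_w$-type contraction, so it lies in $S_n$ and is a geodesic of length $s\dfeq\qty(\half)^{|w|+1}$, the side length of a maximal subcell of $\br w$. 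Following these by $\gamma|_{[t^*,1]}$ yields a path from $x$ to $y$ whose first point of $\br{wj}$ is $\br{wi\bar j}$, since $\sigma_1\subseteq\br{wi}$ meets $\br{wj}$ only there. It therefore suffices to prove that the discarded arc $\gamma|_{[0,t^*]}$ has length at least $2s$, which I expect to be the main obstacle.

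The estimate expresses the intuition that this arc must traverse two full subcell widths. Its endpoint $x=\br{wi\bar i}$ is a vertex of $\br{wi}$ at Euclidean distance exactly $s$ from every bridge point of $\br{wi}$, and its endpoint $p=\br{wk\bar j}$ is a vertex of $\br{wk}$ at Euclidean distance exactly $s$ from every other bridge point of $\br{wk}$. Since $p\notin\br{wi}$ (as $k\ne i$) the arc leaves $\br{wi}$ through some bridge point $\br{wi\bar m}$, and $m\notin\{i,j\}$ because $\gamma$ avoids $\br{wj}\ni\br{wi\bar j}$ before time $t^*$; since $x\notin\br{wk}$ while a neighborhood of $p$ in $\br w$ meets only $\br{wj}$ and $\br{wk}$ and $\gamma$ avoids $\br{wj}$, the arc enters $\br{wk}$ for the last time at some bridge point $\br{wk\bar c}$ with $c\ne j$, strictly before reaching $p$. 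One thus extracts two non-overlapping sub-arcs of $\gamma|_{[0,t^*]}$, from $\br{w\bar i}$ to $\br{wi\bar m}$ and from $\br{wk\bar c}$ to $p$, each of Euclidean diameter $s$ and so of length at least $s$, whence $|\gamma|_{[0,t^*]}|\ge 2s$. The one delicate point is the disjointness of these two sub-arcs: when the entry point $\br{wk\bar c}$ of the final sojourn in $\br{wk}$ happens to lie in $\br{wi}$ it must equal $\br{wi\bar k}$, and one then uses the sub-arc running from $\br{w\bar i}$ straight to that point instead; this is a finite case check rather than a genuine difficulty.
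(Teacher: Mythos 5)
Your proof is correct and takes essentially the same route as the paper's: locate the first entry of $\gamma$ into $\br{wj}$ at a bridge point $\br{wj\bar k}$, dispose of the case $k=i$, and otherwise splice in the two segments $\br{w\bar i}\to\br{wi\bar j}\to\br{wj\bar k}$ of total length $2L$ after showing the discarded initial arc has length at least $2L$. The only divergence is in how that last bound is justified --- you extract two sub-arcs crossing $\br{wi}$ and $\br{wk}$ and use their Euclidean diameters, whereas the paper argues directly that $\br{wj\bar k}$ lies at distance at least $L$ from $\br{wi}$ and hence $2L$ from $\br{w\bar i}$ --- and the overlap issue you flag as delicate is in fact harmless, since the two sub-arcs lie in $\br{wi}$ and $\br{wk}$ respectively, which intersect in a single point.
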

\begin{proof}
We know $\gamma$ enters $\br{wj}$ at a point $\br{wj\bar{k}}$. Moreover, by modifying $\gamma$ as in Lemma~\ref{lem:geostaysincell} to remove all excursions outside $\br{wj}$, we obtain a (possibly shorter) path with the property that the only portion of the path that is outside $\br{wj}$ is the initial segment from $\br{w\bar i}$ to $\br{wj\bar{k}}$. Let $L$ denote the length of a side of a maximal subcell.   If $k=i$ we are done, but if not then the point $\br{wj\bar k}$ is not in $\br{wi}$, so is distance at least $L$ from $\br{wi}$ and thus $2L$ from $\br{w\bar i}$.  However the line segments from $\br{wj\bar k}$ to $\br{wj\bar i}=\br{wi\bar j}$ to $\br{w\bar i}$ have length exactly $2L$, so using this as the initial segment of $\gamma$ makes the path no longer and ensures it enters $\br{wj}$ at the specified point.
\end{proof}

The preceding lemma tells us how to construct a geodesic from a boundary point of a cell to any point inside the cell.   Moreover, it allows us to write the length of this geodesic in terms of the barycentric coordinates.  The latter was previously noted in~\cites{strichartz1999, christensen2008}.

\begin{prop} \label{dist-bary}
Let $x=\br{w\bar{i}}$ and $y\in\br{w}$. Then $d(x,y)=[x]_i-[y]_i$ and there is a geodesic from $x$ to $y$.
\end{prop}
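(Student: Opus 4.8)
The plan is to normalize the problem by self-similarity and then apply Lemmas~\ref{lem:geostaysincell} and~\ref{geolemma} iteratively. Write $m=|w|$, so that $F_w$ is a similarity of ratio $2^{-m}$ carrying $S_n$ onto $\br w$ with $F_w(q_i)=\br{w\bar i}=x$, and put $\hat y=F_w^{-1}(y)\in S_n$. Since $x$ is a boundary point of $\br w$ and $y\in\br w$, Lemma~\ref{lem:geostaysincell} lets us replace any path from $x$ to $y$ by one of no greater length lying inside $\br w$; such paths are exactly the $F_w$-images of paths in $S_n$ from $q_i$ to $\hat y$, with lengths scaled by $2^{-m}$. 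Hence $d(x,y)=2^{-m}d(q_i,\hat y)$, and a geodesic from $q_i$ to $\hat y$ is carried to one from $x$ to $y$. Reading barycentric coordinates off addresses via Lemma~\ref{lem:dyadbary}, a direct computation gives $[x]_i-[y]_i=2^{-m}(1-[\hat y]_i)$, so it suffices to prove $d(q_i,z)=1-[z]_i$, attained by some path, for every $z\in S_n$.

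For the upper bound I would exhibit a path explicitly. Fix an address $z=\br{u_1u_2\cdots}$ and unfold it one letter at a time: if $u_1=i$ there is nothing to do at this stage (recall $F_i(q_i)=q_i$), while if $u_1=j\neq i$ one first traverses the simplex edge from $q_i$ to the bridge point $\br{j\bar i}=F_j(q_i)$, of length $\tfrac12$; in either case one continues recursively inside $\br{u_1}$, from $F_{u_1}(q_i)$ towards $z$. This produces a countable concatenation of segments, each lying along an edge of a cell and hence inside $S_n$, whose $k$-th partial path ends at the vertex $F_{u_1\cdots u_k}(q_i)$ of the cell $\br{u_1\cdots u_k}$; because these cells have diameters tending to $0$, the partial paths converge uniformly to a genuine path from $q_i$ to $z$, of total length $\sum_{k:\,u_k\neq i}2^{-k}=1-[z]_i$ by Lemma~\ref{lem:dyadbary}. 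Thus $d(q_i,z)\le 1-[z]_i$.

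The heart of the matter is the matching lower bound. I would show that any path $\gamma$ from $q_i$ to $z$ can be replaced, without ever increasing its length, by paths agreeing with the canonical path above on longer and longer initial portions. Set $\gamma^{(0)}=\gamma$. At stage $k$ one has a path $\gamma^{(k)}$ from $q_i$ to $z$ consisting of the first $k$ canonical segments followed by a tail contained in the cell $C_k=\br{u_1\cdots u_k}$. Pull this tail back by $F_{u_1\cdots u_k}^{-1}$ to a path in $S_n$ from $q_i$ to $\br{u_{k+1}u_{k+2}\cdots}$, and apply Lemma~\ref{lem:geostaysincell} if $u_{k+1}=i$ (confining the tail to $\br i$), or Lemma~\ref{geolemma} if $u_{k+1}=j\neq i$ (forcing the tail to enter $\br j$ through the bridge point $F_j(q_i)$, preceded by an initial leg from $q_i$ to that bridge point of length at least $\tfrac12$); pushing forward again yields $\gamma^{(k+1)}$ with $|\gamma^{(k+1)}|\le|\gamma^{(k)}|$ in which one further canonical segment is locked in as a subpath. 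Hence $|\gamma|\ge|\gamma^{(k)}|\ge\sum_{\ell\le k,\,u_\ell\neq i}2^{-\ell}$ for every $k$, and letting $k\to\infty$ gives $|\gamma|\ge 1-[z]_i$. Combined with the upper bound, $d(q_i,z)=1-[z]_i$ and the canonical path is a geodesic; the opening reduction then completes the proof.

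I expect the bookkeeping in the lower bound to be the main obstacle: one must rescale the tail correctly at each stage, verify that neither lemma can increase the total length, and justify the passage to the limit. It is worth stressing that no shortcut through a Lipschitz estimate for the affine coordinate function $x\mapsto[x]_i$ is available, since its Lipschitz constant for the Euclidean metric on the simplex is strictly larger than $1$; the geometric fact that actually does the work---edges of $S_n$ are never parallel to the direction of steepest increase of $[x]_i$---enters only indirectly, through the straightening step in Lemma~\ref{geolemma}.
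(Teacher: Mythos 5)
Your proposal is correct and follows essentially the same route as the paper: normalize to the unit gasket, build the canonical path by unfolding the address of $y$ through the bridge points $\br{w_k\bar i}$, read off its length $\sum_{k:\,u_k\neq i}2^{-k}=[x]_i-[y]_i$ via Lemma~\ref{lem:dyadbary}, and use Lemma~\ref{geolemma} to show no path is shorter. The only difference is that you spell out the iterative application of Lemmas~\ref{lem:geostaysincell} and~\ref{geolemma} and the passage to the limit for the lower bound, which the paper compresses into a single sentence; your version is a legitimate (and somewhat more careful) elaboration of the same argument.
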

\begin{proof}
It is sufficient to work on $S_n$, because $x,y\in\br{w}$ implies the first $|w|$ binary terms of $[x]_i$ and $[y]_i$ are equal.  Let  $w_m$ be the level $m$ truncation of an (infinite) address for $y$ and $x_m=\br{w_m\bar i}$, so $x_0=x$.  Define a path $\gamma$ on $[0,1)$ by mapping $[1-2^{-k},1-2^{-(k+1)}]$ to the segment from $x_k$ to $x_{k+1}$.  Since $\{\bar{w_m}\}$ intersects to $y$ we have $x_m\to y$ and thus may extend $\gamma$ continuously to $[0,1]$ by setting $\gamma(1)=y$.  Lemma~\ref{geolemma} ensures any path from $x$ to $y$ is at least as long as $\gamma$, so $\gamma$ is a geodesic.

It remains to compute the length of $\gamma.$  Observe that $\gamma$ is constant on any segment where $x_k=x_{k+1}$, and otherwise has length $2^{-(k+1)}$.  Moreover, $x_k=x_{k+1}$ if and only if the $(k+1)^\text{th}$ letter in the address of $y$ is $i$.  Now Lemma~\ref{lem:dyadbary} says this occurs if and only if the $(k+1)^{\text{th}}$ term in the binary expansion of $[y]_i$ is $1$, and that $x=\br{\bar i}$ implies every term in the binary expansion of $[x]_i$ is $1$.  Thus the binary expansion of $[x]_i-[y]_i$ has coefficient $1$ multiplying $2^{-(k+1)}$ precisely when $x_k\neq x_{k+1}$, proving the formula for $d(x,y)$.
\end{proof}
\begin{remark}
Note that the construction of $\gamma$ terminates (i.e. $x_k=y$ for all sufficiently large $k$) if $y$ is a vertex which has address $y=\br{w'\bar i}$ for some word $w'$.
\end{remark}

\begin{cor}\label{cor:geodesicsexist}
For any distinct pair of points $x$, $y$ there is a geodesic from $x$ to $y$.  All geodesics are contained in the common cell of $x$ and $y$.
\end{cor}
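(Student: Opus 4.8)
The plan is to treat the two assertions separately, the containment one being short. Let $\br{w}$ denote the common cell of $x$ and $y$. The shortening argument proving Lemma~\ref{lem:geostaysincell} in fact applies whenever $x,y\in\br{w}$, not only when one of them is a boundary point: an excursion of a path outside $\br{w}$ begins and ends at boundary points of $\br{w}$, and it can be deleted (if these coincide) or replaced by the Euclidean segment joining them, which lies in $\br{w}$ and is strictly shorter since the excursion left $\br{w}$. Thus every path not contained in $\br{w}$ is strictly longer than some path contained in $\br{w}$; in particular no geodesic from $x$ to $y$ leaves $\br{w}$, and when proving existence we may restrict attention to competitors inside $\br{w}$.

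For existence I would exhibit a finite family of explicit candidate paths and take a shortest one. As $\br{w}$ is the \emph{smallest} cell containing both points, no maximal subcell of $\br{w}$ contains both, so fix $i\ne j$ with $x\in\br{wi}$ and $y\in\br{wj}$; write $L$ for the side length of a maximal subcell of $\br{w}$. For each $a\ne i$, let $\gamma_a$ run a geodesic inside $\br{wi}$ from $x$ to the bridge point $\br{wi\bar a}$ (which exists by Proposition~\ref{dist-bary}), then, if $a=j$, a geodesic inside $\br{wj}$ from $\br{wj\bar i}=\br{wi\bar j}$ to $y$, and, if $a\ne j$, the Euclidean segment inside $\br{wa}$ from $\br{wa\bar i}$ to $\br{wa\bar j}=\br{wj\bar a}$ (of length $L$, by Lemma~\ref{lem:geostaysincell}) followed by a geodesic inside $\br{wj}$ from $\br{wj\bar a}$ to $y$. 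Then $|\gamma_j|=d(x,\br{wi\bar j})+d(\br{wj\bar i},y)$ and, for $a\ne j$, $|\gamma_a|=d(x,\br{wi\bar a})+L+d(\br{wj\bar a},y)$. Let $\gamma$ be a shortest member of this family; the claim is that $\gamma$ is a geodesic.

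To prove the claim, take any path $\sigma$ from $x$ to $y$, which by the first step may be assumed to lie in $\br{w}$, and peel it apart subcell by subcell. Set $a_0=i$ and $\sigma_0=\sigma$; given $\sigma_k$ (which for $k\ge1$ starts at the bridge point $\br{wa_k\bar a_{k-1}}$), if $y\in\br{wa_k}$ stop and put $m=k$, and otherwise let $t_k$ be the last time $\sigma_k$ lies in $\br{wa_k}$, so $\sigma_k(t_k)$ is a bridge point $\br{wa_k\bar a_{k+1}}$, and set $\sigma_{k+1}=\sigma_k|_{[t_k,1]}$. The step I expect to be the main obstacle is checking that $\sigma_{k+1}$ re-enters none of $\br{wa_0},\dots,\br{wa_k}$ --- which holds because near a bridge point the gasket is contained in the union of the two maximal subcells meeting there --- so that $a_0,\dots,a_m$ are distinct, whence $m\le n$ and $a_m=j$. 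Granting this, additivity of length gives
\[
|\sigma|\ \ge\ d(x,\br{wi\bar a_1}) + (m-1)L + d(\br{wj\bar a_{m-1}},y),
\]
because the piece of $\sigma$ before $t_0$ has length at least $d(x,\br{wi\bar a_1})$, each piece through an intermediate cell $\br{wa_k}$ ($1\le k\le m-1$) joins the distinct boundary points $\br{wa_k\bar a_{k-1}}$ and $\br{wa_k\bar a_{k+1}}$ of $\br{wa_k}$ and hence has length at least $L$, and the final piece, from $\br{wj\bar a_{m-1}}$ to $y$, has length at least $d(\br{wj\bar a_{m-1}},y)$. When $m\le2$ the right-hand side is exactly $|\gamma_{a_1}|$; when $m\ge3$ it is at least $|\gamma_{a_{m-1}}|$, since the triangle inequality and Lemma~\ref{lem:geostaysincell} give $d(x,\br{wi\bar a_{m-1}})\le d(x,\br{wi\bar a_1})+L\le d(x,\br{wi\bar a_1})+(m-2)L$. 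In every case $|\sigma|\ge|\gamma|$, so $\gamma$ realizes $d(x,y)$. (Existence alone can also be obtained from a routine Arzel\`a--Ascoli argument applied to a length-minimizing sequence of constant-speed paths in the compact set $\br{w}$, together with lower semicontinuity of length; but the construction above additionally records the structure of geodesics used in later sections.)
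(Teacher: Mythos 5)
Your proof is correct, and the containment half coincides with the paper's (both rest on the excursion-shortening argument of Lemma~\ref{lem:geostaysincell}). For existence, however, you take a genuinely different route. The paper shrinks to disjoint cells $\br{w_x}\ni x$ and $\br{w_y}\ni y$ at a common fine level $m$, joins $x$ and $y$ to the boundary points of those cells by Proposition~\ref{dist-bary}, and then uses Lemma~\ref{geolemma} to reduce the remaining problem to a shortest edge-path in the finite level-$m$ graph, so existence follows from minimizing over a finite collection. You instead stay at the top level of the common cell, split it into its $n+1$ maximal subcells, exhibit the $n$ explicit candidates --- one $P_1$-type and $n-1$ $P_2$-type paths in the later terminology of Definition~\ref{P1-P2} --- and prove that one of them is minimal by peeling an arbitrary competitor into its successive passages through distinct maximal subcells. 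Your version is longer but buys, already at this stage, the structural description of geodesics (geodesic to a bridge point, at most one full edge, geodesic from a bridge point) that the paper only extracts later in Lemma~\ref{lem:1or2bridge} and Theorems~\ref{thm:five} and~\ref{thm:eight}; the paper's version is shorter because it delegates the combinatorics to the level-$m$ graph. Two small points, neither of which is a gap relative to the paper's own level of rigor: the distinctness of the exit indices $a_0,\dots,a_m$ is essentially forced by taking $t_k$ to be the \emph{last} time in $\br{wa_k}$, your local observation about bridge points being needed only to exclude the degenerate case $t_k=t_{k-1}$; and your appeal to ``additivity of length'' over the subdivision $0\le t_0\le\dots\le t_{m-1}\le 1$ is the same implicit use of $|\sigma|\ge\sum_k d(\sigma(t_{k-1}),\sigma(t_k))$ that the paper makes in Lemma~\ref{geolemma} and Proposition~\ref{dist-bary} (immediate for the total-variation notion of length, though not for $\H^1$ of the image of a self-retracing path).
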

\begin{proof}
Take cells $\br{w_x}$ containing $x$ and $\br{w_y}$ containing $y$ with $|w_x|=|w_y|=m$ large enough that the cells do not intersect.  There is a geodesic from each boundary point of $\br{w_x}$ to $x$ and from each boundary point of $\br{w_y}$ to $y$. Moreover, between a boundary point of $\br{w_x}$ and a boundary point of $\br{w_y}$ any geodesic is composed of a finite union of edges in the graph at level $m$ by Lemma~\ref{geolemma}.  This reduces the problem of finding a geodesic to identifying the shortest curve in a finite collection, which may always be solved.

The fact that geodesics stay in the common cell is a consequence of Lemma~\ref{lem:geostaysincell}, because a path that exits this cell may be written as the concatenation of a path from $x$ to the cell boundary, a path between cell boundary points, and a path from the cell boundary to $y$, each of which can be strictly shortened if it exits the cell.
\end{proof}

\begin{cor} \label{cor:bary-dist-n}
Let $x=\br{\bar i}$, and define a hyperplane $H=\{(y_0,y_1,\ldots ,y_n)\in S_n: y_i=a\}$ for some fixed $a$.
Then $d(x,y)$ is constant for all $y\in H$.
\end{cor}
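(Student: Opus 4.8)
The plan is to obtain this as an immediate specialization of Proposition~\ref{dist-bary}. Take $\br{w}$ to be the $0$-level cell with empty address, so that $\br{w}=S_n$; then $x=\br{\bar i}=q_i$ is one of its boundary points (in the notation of Proposition~\ref{dist-bary}, $x=\br{w\bar i}$ with $w$ empty), and every $y\in S_n$ lies in $\br{w}$. Proposition~\ref{dist-bary} therefore applies verbatim and yields $d(x,y)=[x]_i-[y]_i$ for all $y\in S_n$.

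Next I would record that $[x]_i=1$. This is immediate from the definition of barycentric coordinates, since $q_i=0\cdot q_0+\cdots+1\cdot q_i+\cdots+0\cdot q_n$; alternatively, by Lemma~\ref{lem:dyadbary} the address $\br{\bar i}$ has $w_j=i$ for every $j$, so every binary digit of $[x]_i$ equals $1$ and hence $[x]_i=\sum_{j\geq1}2^{-j}=1$. Substituting this into the formula from Proposition~\ref{dist-bary} gives $d(x,y)=1-[y]_i$ for every $y\in S_n$.

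Finally, for $y\in H$ the defining condition is precisely $[y]_i=y_i=a$, so $d(x,y)=1-a$, which is independent of the choice of $y\in H$. This is the desired conclusion. There is no genuine obstacle: the only things to check are that the empty word is an admissible choice of $w$ in Proposition~\ref{dist-bary} (which holds because the empty composition of the $F_j$ is the identity, so $\br{\varnothing}=S_n$) and that the vertex $q_i$ has $i$th barycentric coordinate $1$, both of which are routine.
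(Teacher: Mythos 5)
Your proposal is correct and is exactly the argument the paper intends: the corollary is stated without an explicit proof precisely because it follows immediately from Proposition~\ref{dist-bary} via $d(x,y)=[x]_i-[y]_i=1-a$ for $y\in H$. Your additional checks (that the empty word is admissible and that $[q_i]_i=1$) are routine and consistent with the paper's conventions.
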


\subsection{Uniqueness in $S_2$}\label{subsec:uniqueness-S2}

We now turn to the question of geodesic uniqueness in the Sierpinski $n$-gasket.  As shown in
\cite{saltan2016}, there are at most five geodesics between two points in $S_2$. We provide an alternate, 
more geometrical proof, beginning with the case of geodesics between a boundary point 
of a cell and a point contained in the cell.

\begin{prop}\label{prop:two-geodesics-n}
Let $x=\br{\bar i}$ be a boundary point of $S_n$, and $y\neq x$. Then there is a unique geodesic between $x$ and $y$ unless $y=\br{wk\bar j}=\br{wj \bar k}$ for some finite word $w$ and some $j,k$ such that $i$, $j$ and $k$ are distinct.
\end{prop}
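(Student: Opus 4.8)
The plan is to analyze the construction of geodesics from Proposition~\ref{dist-bary} and track exactly where a choice is available. Recall that a geodesic from $x=\br{\bar i}$ to $y$ is built by writing an address $y=\br{w_1w_2\cdots}$ and concatenating line segments from $x_k=\br{w_1\cdots w_k\bar i}$ to $x_{k+1}=\br{w_1\cdots w_{k+1}\bar i}$. By Lemma~\ref{geolemma} applied inside each cell $\br{w_1\cdots w_k}$, an optimal path must enter the next maximal subcell $\br{w_1\cdots w_k w_{k+1}}$ through the bridge point $\br{w_1\cdots w_k\, i\, \overline{w_{k+1}}}$; and by Lemma~\ref{lem:geostaysincell} it must then follow the straight segment inside that subcell to the next bridge point. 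So the only freedom is in the choice of address for $y$, and addresses of $y$ differ only when $y$ is a vertex, i.e. $y\in V_n^*$. Thus, the first step is: if $y\notin V_n^*$ then $y$ has a unique address, hence (by the rigidity just described) a unique geodesic, and moreover $y$ is certainly not of the excluded form, so the proposition holds vacuously in that case.

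Next I would handle $y\in V_n^*$, so $y$ has exactly two addresses, say $y=\br{w k\bar j}=\br{w j\bar k}$ with $j\neq k$ (here $\br{wk}$ and $\br{wj}$ are the two maximal subcells of $\br{w}$ whose intersection is $y$). There are two candidate geodesics, call them $\gamma_{kj}$ (using the address ending $k\bar j$) and $\gamma_{jk}$ (using $j\bar k$); both have length $[x]_i-[y]_i$ by Proposition~\ref{dist-bary}. The key observation is that these two geodesics agree up through the cell $\br w$: inside $\br w$ the relevant letter is common to both addresses, so both curves arrive at the same bridge point on the boundary of... more precisely, they agree until they reach the last common cell $\br w$ and then must split, one going into $\br{wk}$, the other into $\br{wj}$. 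So $\gamma_{kj}=\gamma_{jk}$ as curves if and only if the two tail pieces — the segment from $x$-restricted-to-$\br w$ into $\br{wk}$ then to the vertex $\br{wk\bar j}$, versus the segment into $\br{wj}$ then to $\br{wj\bar k}$ — coincide, which, since they lie in different subcells, forces them to be a single segment, and that happens exactly when the boundary point of $\br w$ through which the geodesic enters $\br w$ lies on the line through $y$ determined by...

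The crucial computation is therefore: the geodesic enters $\br w$ at the boundary vertex $\br{w\bar i}$ (the "$i$-corner" of $\br w$), travels to the bridge point, and the two tails are distinct curves \emph{unless the point $\br{w\bar i}$, the relevant bridge point(s), and $y$ are collinear in the right way}, and one checks by looking at the subcell $\br w$ (self-similar to all of $S_n$) that this collapse occurs precisely when $i\notin\{j,k\}$, i.e. $i,j,k$ are three distinct letters. Concretely: inside $\br w$ we are asking for a geodesic from the $i$-corner to the vertex shared by subcells $\br{wj}$ and $\br{wk}$; if $i\in\{j,k\}$, say $i=j$, then $y=\br{wj\bar k}=\br{wi\bar k}$ and Proposition~\ref{dist-bary}'s construction gives $x_1=\br{wi}$'s relevant vertex immediately (the address choice $j=i$ is forced to be optimal and the other is not, or they yield the same segment since the path simply runs straight from the $i$-corner toward $y$), so the geodesic in $\br w$ is the single Euclidean segment from $\br{w\bar i}$ to $y$ — unique. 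If $i,j,k$ are distinct, then from the $i$-corner of $\br w$ one may reach $y$ either via the bridge point $\br{wi\bar j}$ (then the segment in $\br{wj}$ to $y=\br{wj\bar k}$) or via the bridge point $\br{wi\bar k}$ (then the segment in $\br{wk}$ to $y=\br{wk\bar j}$); these are genuinely different broken segments, each of length $2L$ where $L$ is the side length of a maximal subcell of $\br w$, hence two distinct geodesics. Combining the two subcell segments gives explicitly the two geodesics, proving non-uniqueness in exactly the stated case.

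I expect the main obstacle to be making the "collapse" dichotomy rigorous and clean: one must argue that beyond the splitting cell $\br w$ the rest of each geodesic is forced (this is the content of Lemmas~\ref{lem:geostaysincell} and~\ref{geolemma} together with the uniqueness already established for non-vertex endpoints and for the straight pieces), so that the \emph{only} place two geodesics can differ is in the single cell $\br w$, and there the count is an elementary planar computation in the triangle $\br w\cong S_n$ with a corner and an opposite-edge vertex as endpoints. Care is also needed because $\br w$ could be all of $S_n$ (i.e. $w$ empty), and because one should confirm that when $i,j,k$ are distinct the two broken-segment paths really are length-minimizing and not merely of equal length — but that is exactly Proposition~\ref{dist-bary}, since both correspond to legitimate addresses of $y$.
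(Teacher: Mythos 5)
Your overall strategy---every geodesic from $\br{\bar i}$ to $y$ should be the canonical path of Proposition~\ref{dist-bary} built from some address of $y$, so that non-uniqueness can only arise from $y$ having two addresses---is the same as the paper's, and your analysis of the terminal cell (a unique geodesic when $i\in\{j,k\}$, exactly two when $i,j,k$ are distinct) matches the paper's ``easy finite computation.'' However, there is a genuine gap at the first and most important step. You assert that ``an optimal path must enter the next maximal subcell through the bridge point $\br{w_1\cdots w_k\, i\, \overline{w_{k+1}}}$,'' citing Lemma~\ref{geolemma}. That lemma only produces a path of \emph{shorter or equal} length passing through that bridge point; it does not force a geodesic to pass through it. The equality case is exactly where a geodesic can deviate---and your own conclusion requires that it sometimes does, since the second geodesic to a vertex $y=\br{wk\bar j}=\br{wj\bar k}$ approaches $y$ through the subcell $\br{wj}$ and meets $\br{wk}$ only at the non-canonical boundary point $y$ itself. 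So the claim that ``the only freedom is in the choice of address'' is precisely what must be proved, and Lemma~\ref{geolemma} alone does not prove it; as written, your justification would also (wrongly) rule out the second geodesic in the vertex case.

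The missing ingredient is the quantitative comparison that constitutes the heart of the paper's proof: if a geodesic $\gamma_2$ enters some cell $\br{w_m}$ through a vertex $z'=\br{w_m\bar j}$ with $j\neq i$ (take $m$ largest with this property), then the modification of Lemma~\ref{geolemma} yields an equal-length path through $z=\br{w_m\bar i}$ followed by $z'$, whence $d(x,z)+d(z,y)=d(x,z)+2^{-m}+d(z',y)$; since Proposition~\ref{dist-bary} gives $d(z,y)\le 2^{-m}=d(z,z')$, this forces $d(z',y)=0$, i.e.\ $y=z'$ is a vertex, so any deviation from the canonical bridge points can occur only at the terminal step. This single computation simultaneously settles the non-vertex case and localizes all possible differences between geodesics to the one cell $\br{w_{m-1}}$, after which your finite check applies. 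Without it, neither the uniqueness claim for $y\notin V_n^*$ nor the claim that a vertex admits only the two address-geodesics is established.
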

\begin{proof}
Let $\gamma_1$ be a geodesic constructed as in Proposition~\ref{dist-bary} from a sequence of cells $\{\br{w_m}\}$ that intersect to $y$ and $\gamma_2$ be any other geodesic from $x$ to $y$. 
Take the largest $m$ such that $\gamma_2$ enters $\br{w_m}$ through a vertex $z'=\br{w_m \bar j}$ with $j\neq i$, and write $z=\br{w_m\bar i}$. Using Lemma~\ref{geolemma} we can modify $\gamma_2$ to form  $\tilde{\gamma}_2$ which passes through $z$ and then $z'$ and has the same length as $\gamma_2$.  Then 
\[ d(x,z)+d(z,y)= |\gamma_1|=|\tilde{\gamma}_2|=d(x,z)+d(z,z')+d(z',y)=d(x,z)+2^{-m}+d(z',y).\]
 However Proposition~\ref{dist-bary} ensures $d(z,y)\leq 2^{-m}=d(z,z')$, so $d(z',y)=0$ and $y=z'$ is a vertex.  Moreover $\tilde{\gamma}_2=\gamma_1$.

Now by our choice of $m$ we know $\gamma_1$ and $\gamma_2$ coincide between $x$ and $\br{w_{m-1}\bar i}$ because they are built from the same bridge points.  Thus the only difference between these geodesics occurs on $\br{w_{m-1}}$, and they begin at $\br{w_{m-1}\bar i}$ and end at $y=\br{w_m \bar j}$, where $w_m=w_{m-1}k$ for some $k$.    Determining the possible paths in $\br{w_{m-1}}$ is therefore the same as determining the geodesics in $S_n$ from $\br{\bar{i}}$ to $\br{k\bar j}=\br{j\bar k}$ for some $k$ and some $j\neq i$. This is an easy finite computation: there is a unique such geodesic if $k=i$ or $k=j$ and exactly two geodesics if $k\neq i,j$, one through $\br{i\bar k}=\br{k\bar i}$ and one through $\br{i\bar j}=\br{j\bar i}$.
\end{proof}
\begin{remark}
If $y$ is a vertex of the form identified in the proposition then  we may use either of its two addresses to construct a geodesic from $x$ to $y$ by the method of Proposition~\ref{dist-bary}.  The two addresses lead to the two distinct geodesics identified in Proposition~\ref{prop:two-geodesics-n}.
\end{remark}


\begin{cor} \label{cor:address-nonunique}
Let $y \in \br{w} \subseteq S_2$. If there are two distinct geodesics from $y$ to a boundary point of $\br{w}$  
then there is only one geodesic from $y$ to each of the other two boundary points of $\br{w}$.
\end{cor}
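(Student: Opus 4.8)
The plan is to reduce to the base case $\br{w}=S_2$ and then read the result directly off Proposition~\ref{prop:two-geodesics-n}. First I would invoke self-similarity: the similarity $F_w$ carries $S_2$ bijectively onto $\br{w}$ and rescales all lengths by $2^{-|w|}$, and by Corollary~\ref{cor:geodesicsexist} every geodesic between $y$ and a boundary point of $\br{w}$ lies in their common cell, which is contained in $\br{w}$. Hence $F_w$ induces a length-rescaling bijection between geodesics in $\br{w}$ joining $y$ to a boundary point and geodesics in $S_2$ joining $F_w^{-1}(y)$ to the corresponding one of $q_0=\br{\bar0}$, $q_1=\br{\bar1}$, $q_2=\br{\bar2}$, and in particular the number of such geodesics is the same. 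So it suffices to prove: if $y\in S_2$ admits two distinct geodesics to one of $q_0,q_1,q_2$, then it admits a unique geodesic to each of the other two.

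The key step is a clean reformulation of Proposition~\ref{prop:two-geodesics-n} for $S_2$. A vertex $v\notin\{q_0,q_1,q_2\}$ has exactly two addresses, of the shape $\br{ua\bar b}$ and $\br{ub\bar a}$ with $a\neq b$; both are eventually constant, with distinct eventual values $b$ and $a$, so the two-element set $E(v)=\{a,b\}$ is well defined and omits exactly one index of $\{0,1,2\}$. Unwinding the exceptional clause in Proposition~\ref{prop:two-geodesics-n} (namely $y=\br{wk\bar j}=\br{wj\bar k}$ with $i,j,k$ distinct), one sees that it says precisely: there is more than one geodesic from $q_i$ to a point $y$ if and only if $y$ is such a vertex and $i\notin E(y)$.

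With this in hand the corollary is immediate. Suppose $y$ has two distinct geodesics to some boundary point $q_i$ of $S_2$. Then $y$ is a vertex with $i\notin E(y)$, and since $E(y)$ has two elements this forces $E(y)=\{0,1,2\}\setminus\{i\}$. Hence for either other boundary point $q_{i'}$ (so $i'\neq i$) we have $i'\in E(y)$, and the criterion of the previous paragraph yields a unique geodesic from $y$ to $q_{i'}$. Transporting back through $F_w$ completes the proof.

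I expect whatever care is needed to lie in the supporting steps rather than the final deduction: confirming that the self-similar reduction genuinely preserves the geodesic count (this is exactly where Corollary~\ref{cor:geodesicsexist} enters, to guarantee the competing geodesics never leave $\br{w}$), and confirming that the exceptional-vertex clause of Proposition~\ref{prop:two-geodesics-n} is faithfully encoded by the condition $i\notin E(y)$ — i.e. that the two addresses produced by that clause really are the only two addresses of $y$ and that their eventual values are genuinely distinct. Both are routine given the address description in Section~\ref{subsec:vertices-address-barycentric}, so there is no substantive obstacle; the entire content of the corollary is the arithmetic triviality that three indices cannot all be the one index missing from a single two-element set $E(y)$.
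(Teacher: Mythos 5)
Your proof is correct and follows essentially the same route as the paper: apply Proposition~\ref{prop:two-geodesics-n} to conclude that $y$ must be a vertex whose two addresses end in the two indices other than $i$, and then observe that the exceptional clause of that proposition therefore cannot hold for either of the remaining boundary points. Your explicit self-similar reduction to $\br{w}=S_2$ and the bookkeeping via the set $E(y)$ are just a more formal packaging of the paper's three-sentence argument.
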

\begin{proof}
There are two distinct geodesics from $y$ to $\br{w\bar{i}}$, so by Proposition~\ref{prop:two-geodesics-n}
 we have $y=\br{wj\bar k}$ where $j$ and $k$ are distinct from each other and from $i$.  The proposition also tells us that such a $y$ has unique geodesics to the boundary points $\br{w\bar j}$ and $\br{w\bar k}$, and since we are in $S_2$ with $i,j,k$ distinct, this covers all boundary points.
\end{proof}

Our results on geodesics between a point in a cell and a boundary point of that cell have implications for geodesics between arbitrary points.  Recall that the bridge points of $S_n$ are the intersection points of maximal subcells.  The following lemma gives a useful classification of geodesics between points in distinct maximal cells according to the number of bridge points they contain.
 
\begin{lemma}\label{lem:1or2bridge}
Any geodesic between two points in $S_n$ passes through at most two bridge points.
\end{lemma}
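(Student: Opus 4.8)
The plan is to bound the number of bridge points a geodesic can contain by exploiting the distance formula from Proposition~\ref{dist-bary}, together with the observation that any geodesic between points in distinct maximal subcells must, between consecutive bridge points it visits, run along a side of a maximal subcell. First I would set up notation: suppose $\gamma$ is a geodesic from $x$ to $y$ that passes through bridge points $b_1,\dots,b_r$ in order, where each $b_\ell$ has the form $\br{i_\ell\overline{\jmath_\ell}}=\br{\jmath_\ell\overline{\imath_\ell}}$. Between $b_\ell$ and $b_{\ell+1}$ the path lies outside the interiors of the maximal subcells only along subcell edges, and by Lemma~\ref{lem:geostaysincell} (applied as in Lemma~\ref{geolemma}) we may assume $\gamma$ restricted to this stretch is the Euclidean segment joining them when they are boundary points of a common maximal subcell; the length of such a segment is exactly $L=\tfrac12$ if they are endpoints of an edge of that subcell.

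The key step is to observe that the maximal subcells of $S_n$, viewed as vertices of a graph with an edge whenever two subcells meet at a bridge point, form the complete graph $K_{n+1}$, and a geodesic passing through bridge points $b_1,\dots,b_r$ induces a walk in this graph. I would argue that this walk visits a subcell at most once — if it returned to a subcell it had already left, Lemma~\ref{lem:geostaysincell} would let us shortcut through that subcell's boundary, contradicting minimality — so the walk is a simple path, hence uses at most $n+1$ vertices and at most $n$ edges, i.e. at most $n$ bridge-point traversals. That alone is too weak for $n\ge 3$; the improvement to ``at most two'' comes from combining this with the distance formula. Since $x$ lies in the first subcell of the walk and $y$ in the last, and $d(x,\br{w\bar i})$ for a boundary point is controlled by a single barycentric coordinate, I would show that once the path has passed through two bridge points it has already ``used up'' the budget: concretely, if $\gamma$ passed through three bridge points $b_1,b_2,b_3$, then the portion from $x$ to $b_2$ together with $d(b_2,y)$ would have to equal $d(x,y)$, but a direct computation of $d(x,b_2)+d(b_2,y)$ using Proposition~\ref{dist-bary} and Corollary~\ref{cor:bary-dist-n} shows this exceeds the length of a path through only $b_1$ (or through a single bridge point of the common cell), since the middle segment $b_1b_2$ of length $L$ is not aligned with the straightest route.

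More precisely, I would reduce to the common cell $\br{w}$ of $x$ and $y$ (geodesics stay in it, by Corollary~\ref{cor:geodesicsexist}), and within $\br{w}$ decompose $\gamma$ as: a path from $x$ to the first bridge point of $\br{w}$ it meets, then a segment along bridge points, then a path into the subcell containing $y$ and down to $y$. The first and last pieces are geodesics of the ``boundary point to interior point'' type, whose lengths are given by Proposition~\ref{dist-bary}. A geodesic through $k$ bridge points of $\br{w}$ uses $k-1$ full subcell-edge segments of length $L=\tfrac12$ in the middle, plus entry/exit into the extreme subcells; comparing the total against the candidate geodesic that enters $y$'s subcell directly through a single bridge point $\br{wi\bar j}$ (length $d(x,\br{wi\bar j})+d(\br{wi\bar j},y)\le 2L$ by the argument of Lemma~\ref{geolemma}) forces $k\le 2$.

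I expect the main obstacle to be the bookkeeping that shows the middle walk in $K_{n+1}$ cannot profitably use three or more edges — one must rule out the possibility that a longer detour through several subcells is compensated by shorter entry/exit legs. The cleanest way to close this is the ``no repeated subcell'' shortcutting argument from Lemma~\ref{lem:geostaysincell} to get a simple path, followed by the quantitative comparison: each additional bridge point beyond the first two adds at least $L$ to the middle portion while saving strictly less than $L$ on the entry leg, because entering a far subcell can reduce the relevant barycentric-coordinate gap by at most what one edge traversal already accomplishes. Making that trade-off inequality precise, using $d(x,y)\le d(x,b)+d(b,y)$ with $b$ ranging over the bridge points of $\br{w}$ and the explicit coordinate formula, is the technical heart of the proof.
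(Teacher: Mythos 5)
Your proposal assembles the right raw ingredients --- the single-bridge-point comparison path of length at most $2L=1$, and the fact that bridge points are mutually at distance at least $L=\tfrac12$ --- but it never closes the argument: you explicitly defer ``the technical heart'' (the claim that each extra bridge point adds at least $L$ to the middle while saving strictly less than $L$ on the entry/exit legs) to a trade-off inequality you do not prove. That leg-by-leg trade-off is both unproven and unnecessary. The paper's proof is a two-line global comparison: concatenating a geodesic from $x$ to $\br{i\bar j}$ with one from $\br{i\bar j}$ to $y$, each of length at most $\tfrac12$ by Proposition~\ref{dist-bary}, shows $d(x,y)\le 1$; since any two distinct bridge points are at distance at least $\tfrac12$, a geodesic passing through three of them would already have length at least $1$ from its middle portion alone, leaving nothing for the segments joining $x$ and $y$ to the extreme bridge points. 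There is no possibility of ``a longer detour compensated by shorter entry legs'' to rule out, because $d(x,y)\le 1$ bounds the whole geodesic rather than any individual leg; once you have it, the count of bridge points follows from the triangle inequality with no barycentric-coordinate bookkeeping at all.

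The surrounding machinery in your plan (the walk on $K_{n+1}$, the simple-path shortcutting argument, the observation that this only yields the bound $n$) is correct as far as it goes but does no work toward the stated bound. Moreover, one point of your accounting is imprecise: you treat consecutive bridge points on the geodesic as endpoints of an edge of a common maximal subcell, at distance exactly $L$, whereas in $S_n$ with $n\ge 3$ they may lie in disjoint subcells at distance $\sqrt{2}/2$; this happens not to matter only because the inequality you need is ``$\ge L$'' rather than ``$=L$''. As written, the proposal is a plausible plan whose decisive step is missing; to repair it, discard the trade-off analysis and combine the global upper bound $d(x,y)\le 1$ with the pairwise separation of bridge points.
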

\begin{proof}
Let $x\in \br{i}$ and $y\in \br{j}$.  We may construct a path between them by concatenating 
a geodesic from $x$ to $\br{i \bar j}$ and a geodesic from $\br{i \bar j}$ to $y$. By Proposition~\ref{dist-bary} each such geodesic has length at most $\half$, so $d(x,y)\leq 1$.  However, bridge points are separated by distance $\half$, so there can be at most two on a geodesic.
\end{proof}

We note that if the geodesic from $x$ to $y$ passes through only one bridge point it must be the intersection point of  the maximal cells containing them.

\begin{defn} \label{P1-P2}
Let $x \in \br{i}$ and $y \in \br{j}$, $i\neq j$. The geodesic $\gamma$ from $x$ to $y$ is a  \textit{$P_1$ geodesic} if 
it passes through the bridge point $\br{i\bar j}=\br{i}\cap\br{j}$ and a  \textit{$P_2$ geodesic} if it passes through two bridge 
points, $\br{i\bar k}$ and $\br{j\bar k}$, where $k \neq i,j$. 
\end{defn}

Cristea and Steinsky~\cite{cristea2013} provide geometric criteria for $S_2$ and $S_3$ that 
allow one to determine whether one or both types of geodesics exist between some pair of points. The proof of the following theorem, which gives a sharp bound on the number of geodesics between $x$ and $y$ in $S_2$, recovers their results for $S_2$.  The sharp bound was previously proved in~\cite{saltan2016} by a different method.

\begin{thm}\label{thm:five}
There are at most five distinct geodesics between any two points in $S_2$, and this bound is sharp.
\end{thm}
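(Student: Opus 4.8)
The plan is to reduce, by self-similarity, to the case where the common cell of $x$ and $y$ is all of $S_2$; Corollary~\ref{cor:geodesicsexist} then says every geodesic from $x$ to $y$ lies in $S_2$, and after relabelling the vertices we may assume $x\in\br{1}$ and $y\in\br{2}$, with the third vertex $q_0$. By Lemma~\ref{lem:1or2bridge} each geodesic from $x$ to $y$ passes through one or two bridge points, and since the only bridge points of $S_2$ are $\br{1\bar0}$, $\br{2\bar0}$ and $\br{1\bar2}$, each geodesic is either a $P_1$ geodesic through $\br{1\bar2}=\br{1}\cap\br{2}$ or a $P_2$ geodesic through the pair $\br{1\bar0},\br{2\bar0}$ (the only admissible $k\ne 1,2$ being $k=0$). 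These classes are disjoint, since they meet different numbers of bridge points.

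Next I would bound each class. A $P_1$ geodesic decomposes uniquely into a geodesic from $x$ to $\br{1\bar2}$ inside $\br{1}$ followed by a geodesic from $\br{1\bar2}$ to $y$ inside $\br{2}$, and is determined by this pair; hence the number of $P_1$ geodesics is at most $a_1 b_1$, where $a_1$, $b_1$ are the numbers of geodesics from $x$ to $\br{1\bar2}$ and from $y$ to $\br{1\bar2}$. Similarly, since $\br{1\bar0}$ and $\br{2\bar0}$ are joined by a unique geodesic (the segment, by Lemma~\ref{lem:geostaysincell}), the number of $P_2$ geodesics is at most $a_2 b_2$, where $a_2$, $b_2$ count geodesics from $x$ to $\br{1\bar0}$ and from $y$ to $\br{2\bar0}$. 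By Proposition~\ref{prop:two-geodesics-n}, applied inside the maximal subcells (scaled copies of $S_2$), each of $a_1,a_2,b_1,b_2$ lies in $\{1,2\}$. The key extra ingredient is Corollary~\ref{cor:address-nonunique}: applied to $x$ and the cell $\br{1}$ it forbids $a_1=a_2=2$, and applied to $y$ and $\br{2}$ it forbids $b_1=b_2=2$. Checking the admissible tuples $(a_1,a_2,b_1,b_2)$ gives $a_1 b_1 + a_2 b_2 \le 5$, with equality exactly when one of $\{a_1,a_2\}$ and the matching one of $\{b_1,b_2\}$ equal $2$ while the other two equal $1$.

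For sharpness I would take $x=\br{10\bar1}$ and $y=\br{20\bar2}$, which are interchanged by the reflection of $S_2$ fixing $q_0$ and swapping $q_1,q_2$. Using Proposition~\ref{dist-bary} one computes $d(x,\br{1\bar2})=d(y,\br{1\bar2})=\tfrac12$ and $d(x,\br{1\bar0})=d(y,\br{2\bar0})=\tfrac14$, so the minimal $P_1$ length $\tfrac12+\tfrac12$ and the minimal $P_2$ length $\tfrac14+\tfrac12+\tfrac14$ both equal $1=d(x,y)$; thus geodesics of both types occur. The point $\br{10\bar1}=\br{11\bar0}$ has the exceptional form of Proposition~\ref{prop:two-geodesics-n} relative to the boundary point $\br{1\bar2}$ of $\br{1}$ but not relative to $\br{1\bar0}$, and symmetrically for $y$, so $a_1=b_1=2$ and $a_2=b_2=1$: there are $2\cdot 2=4$ distinct $P_1$ geodesics and one $P_2$ geodesic. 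The $P_2$ geodesic does not meet $\br{1\bar2}$ (for instance $[\,\cdot\,]_0=\tfrac12$ along its middle segment while $[\br{1\bar2}]_0=0$, and its other two segments are too short to reach $\br{1\bar2}$), so it is distinct from the four $P_1$ geodesics, giving five in all.

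The main obstacle is the counting step: Proposition~\ref{prop:two-geodesics-n} alone only gives $a_1 b_1 + a_2 b_2 \le 2\cdot 2 + 2\cdot 2 = 8$, and it is precisely Corollary~\ref{cor:address-nonunique}, which prevents a point from having non-unique geodesics to two different boundary points of a single cell, that sharpens this to $5$. The delicate part is invoking that corollary correctly at both endpoints and confirming that the $P_1$ and $P_2$ counts cannot overlap (which follows since the two types meet different numbers of bridge points); the reduction to the common cell and the verification of the example are routine.
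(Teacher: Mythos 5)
Your proposal is correct and follows essentially the same route as the paper: reduce to the common cell, split geodesics into $P_1$ and $P_2$ classes, bound each factor by Proposition~\ref{prop:two-geodesics-n}, and use Corollary~\ref{cor:address-nonunique} to rule out two non-unique endpoints at the same vertex, with your tuple count $a_1b_1+a_2b_2\le 5$ being just a compact rewriting of the paper's three-case analysis. Your sharpness example $x=\br{10\bar1}$, $y=\br{20\bar2}$ is exactly the configuration in Figure~\ref{fig:5-geo-fig}.
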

\begin{proof}
Fix $x$ and $y$.  Corollary~\ref{cor:geodesicsexist} tells us that all geodesics between $x$ and $y$ lie in their common cell, so we may assume the common cell is $S_2$.  Thus $x\in\br{i}$ and $y\in\br{j}$ with  $i\neq j$, and the geodesics between them are either $P_1$ geodesics through $\br{i\bar j}$ or $P_2$ geodesics through $\br{i\bar k}$ and $\br{j\bar l}$ where $k\neq i,j$.

If $\gamma$ is a geodesic from $x$ to $y$ then its restriction to $\br{i}$ is a geodesic from $x$ to either $\br{i\bar j}$ or $\br{i\bar k}$.  Proposition~\ref{prop:two-geodesics-n} says there are at most two geodesics to either of these points and Corollary~\ref{cor:address-nonunique} says that if there are two to one such point then there is only one to the other, so there are at most three options for the restriction of $\gamma$ to $\br{i}$. Similarly there are at most three options for the restriction of $\gamma$ to $\br{j}$.

We now consider how the pieces of geodesic previously described may be combined. 
\begin{enumerate}
\item[Case 1]  There are two distinct geodesics between $x$ and $\br{i\bar{j}}$ and two between $y$ and $\br{i\bar{j}}$, providing  four $P_1$ paths. In this case the geodesics from $x$ to $\br{i\bar k}$ and $y$ to $\br{j\bar k}$ are unique, as is that between these bridge points, so there is one $P_2$ path.  If the $P_1$ and $P_2$ geodesics are the same length then there are five geodesics in total, otherwise there are four or one.    Figure \ref{fig:5-geo-fig} shows that five may be achieved.
\item[Case 2] There are two distinct geodesics between $x$ and $\br{i\bar{j}}$ and two between $y$ and $\br{j\bar{k}}$. Then there is one from $y$ to $\br{i\bar j}$ so there are two $P_1$ paths, and there is one from $x$ to $\br{i\bar k}$, so there are two $P_2$ paths. If all of these have the same length there are four geodesics, otherwise there are two.
\item[Case 3] There is only one geodesic from $x$ to each of $\br{i\bar{j}}$ and $\br{i\bar{k}}$ and only one from $y$ to each of $\br{j\bar{i}}$ and $\br{j\bar{k}}$. Then there is one $P_1$ and one $P_2$ path; if they are the same length there are two geodesics, and otherwise there is one. \qedhere
\end{enumerate}
\end{proof}

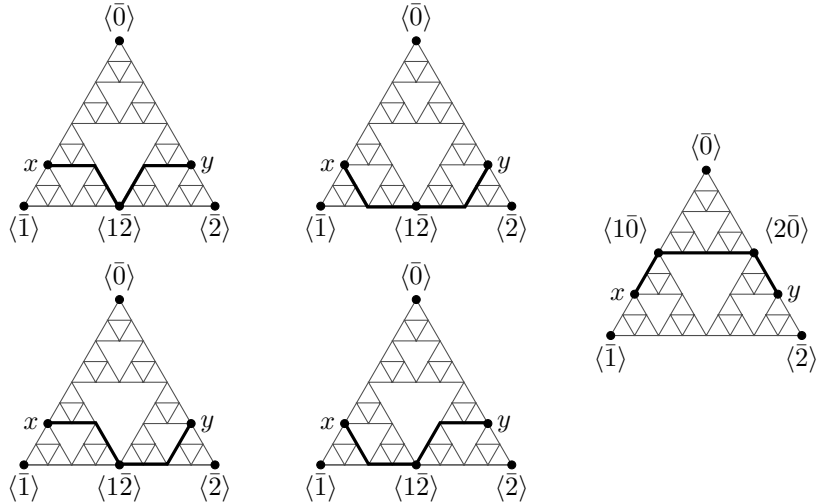
\begin{figure}[H]
\centering
\begin{minipage}{0.55\textwidth}
\begin{subfigure}[b]{0.3\textwidth}
  \centering
  \begin{tikzpicture}[x=1in,y=0.5*\sqTHREE in]
  \Sierpinski[1in]{3}
  \filldraw (1/2,1/8) node[above]{};
  \draw[very thick] (0,-0.005) +(1/8-0.005,1/4) -- +(3/8-0.005,1/4) -- +(1/2-0.005,0) 
    -- +(1/2+0.005, 0) -- +(5/8+0.005,1/4) -- +(7/8+0.005,1/4);
  \filldraw (1/8,1/4) circle (1.5pt) node[left] {$x$};
  \filldraw (7/8,1/4) circle (1.5pt) node[right] {$y$};
  \filldraw (1/2,1) circle (1.5pt) node[align=center,above] {$\br{\bar 0}$};
  \filldraw (0,0) circle (1.5pt) node[below] {$\br{\bar 1}$};
  \filldraw (1,0) circle (1.5pt) node[below] {$\br{\bar 2}$};
  \filldraw (1/2,0) circle (1.5pt) node[align=center,below] {$\br{1 \bar 2}$};
  \end{tikzpicture}
\end{subfigure}
\hspace{4em}
\begin{subfigure}[b]{0.3\textwidth}
  \centering
  \begin{tikzpicture}[x=1in,y=0.5*\sqTHREE in]
  \Sierpinski[1in]{3}
  \filldraw (1/2,1/8) node[above] {};
  \draw[very thick] (0,-0.005) +(1/8-0.005,1/4) -- +(1/4-0.005,0)     -- +(1/2,0) -- +(3/4+0.005,0) -- +(7/8+0.005,1/4);
  \filldraw (1/8,1/4) circle (1.5pt) node[left] {$x$};
  \filldraw (7/8,1/4) circle (1.5pt) node[right] {$y$};
 \filldraw (1/2,1) circle (1.5pt) node[align=center,above] {$\br{\bar 0}$};
  \filldraw (0,0) circle (1.5pt) node[below] {$\br{\bar 1}$};
  \filldraw (1,0) circle (1.5pt) node[below] {$\br{\bar 2}$};
  \filldraw (1/2,0) circle (1.5pt) node[align=center,below] {$\br{1 \bar 2}$};
  \end{tikzpicture}
\end{subfigure}
\\
\begin{subfigure}[b]{0.3\textwidth}
  \centering
  \begin{tikzpicture}[x=1in,y=0.5*\sqTHREE in]
  \Sierpinski[1in]{3}
  \filldraw (1/2,1/8) node[above]{};
  \draw[very thick] (0,0.005) +(1/8,1/4) -- +(3/8,1/4) -- +(1/2,0) -- +(3/4,0) -- +(7/8,1/4);
  \filldraw (1/8,1/4) circle (1.5pt) node[left] {$x$};
  \filldraw (7/8,1/4) circle (1.5pt) node[right] {$y$};
 \filldraw (1/2,1) circle (1.5pt) node[align=center,above] {$\br{\bar 0}$};
  \filldraw (0,0) circle (1.5pt) node[below] {$\br{\bar 1}$};
  \filldraw (1,0) circle (1.5pt) node[below] {$\br{\bar 2}$};
  \filldraw (1/2,0) circle (1.5pt) node[align=center,below] {$\br{1 \bar 2}$};
  \end{tikzpicture}
\end{subfigure}
\hspace{4em}
\begin{subfigure}[b]{0.3\textwidth}
  \centering
  \begin{tikzpicture}[x=1in,y=0.5*\sqTHREE in]
  \Sierpinski[1in]{3}
  \filldraw (1/2,1/8) node[above]{};
  \draw[very thick] (0,0.005) +(1/8,1/4) -- +(1/4,0) -- +(1/2,0) -- +(5/8,1/4) -- +(7/8,1/4);
  \filldraw (1/8,1/4) circle (1.5pt) node[left] {$x$};
  \filldraw (7/8,1/4) circle (1.5pt) node[right] {$y$};
 \filldraw (1/2,1) circle (1.5pt) node[align=center,above] {$\br{\bar 0}$};
  \filldraw (0,0) circle (1.5pt) node[below] {$\br{\bar 1}$};
  \filldraw (1,0) circle (1.5pt) node[below] {$\br{\bar 2}$};
  \filldraw (1/2,0) circle (1.5pt) node[align=center,below] {$\br{1 \bar 2}$};
  \end{tikzpicture}
\end{subfigure}
\end{minipage}
\begin{minipage}{0.25\textwidth}
\begin{subfigure}[b]{0.3\textwidth}
  \centering
  \begin{tikzpicture}[x=1in,y=0.5*\sqTHREE in]
  \Sierpinski[1in]{3}
  \filldraw (1/2,1/2) node[below]{};
  \draw[very thick] (1/8,1/4) -- (1/4,1/2) -- (3/4,1/2) -- (7/8,1/4);
  \filldraw (1/8,1/4) circle (1.5pt) node[left] {$x$};
  \filldraw (7/8,1/4) circle (1.5pt) node[right] {$y$};
  \filldraw (1/2,1) circle (1.5pt) node[align=center,above] {$\br{\bar 0}$};
  \filldraw (0,0) circle (1.5pt) node[below] {$\br{\bar 1}$};
  \filldraw (1,0) circle (1.5pt) node[below] {$\br{\bar 2}$};
  \filldraw (1/4,1/2) circle (1.5pt) node[above left] {$\br{1\bar 0}$};
  \filldraw (3/4,1/2) circle (1.5pt) node[above right] {$\br{2\bar 0}$};
  \end{tikzpicture}
\end{subfigure}
\end{minipage}
\caption{Points $x$ and $y$ that are connected by five distinct geodesics: four $P_1$ geodesics (left)
  and one $P_2$ geodesic (right).}
  \label{fig:5-geo-fig}
\end{figure}

\subsection{Uniqueness in $S_n$} \label{subsec:uniqueness-Sn}

The proof of Theorem~\ref{thm:five} relies on two facts about $S_2$:  there are only three bridge points, so there is at most one pair of bridge points through which a $P_2$ geodesic can pass, and nonuniqueness of a geodesic to one bridge point implies uniqueness to the other two bridge points (Corollary~\ref{cor:address-nonunique}). Neither of these arguments is directly applicable to $S_n$, $n\geq 3$.  However we can obtain a sharp bound in this more general setting by making a more detailed analysis of $P_2$ geodesics.

\begin{lemma} \label{2P2}
Let $x\in\br{i}$ and $y\in\br{j}$, where $i\neq j$.  Then there exist $P_2$ geodesics between
$x$ and $y$ passing through at most two distinct pairs of bridge points.
\end{lemma}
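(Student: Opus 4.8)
The plan is to translate the statement into an inequality among barycentric coordinates, and then observe that three distinct $P_2$ geodesics would force too much coordinate mass onto too few indices. First I would record the relevant distances. Write $\br{i\bar k}$ for the bridge point $\br{i}\cap\br{k}$, defined when $k\neq i$. Since $\br{i\bar k}$ is a boundary point of the cell $\br{i}$ that contains $x$, and its address $i\bar k$ gives $[\br{i\bar k}]_k=\sum_{m\geq2}2^{-m}=\frac12$, Proposition~\ref{dist-bary} yields $d(x,\br{i\bar k})=\frac12-[x]_k$, and symmetrically $d(\br{j\bar k},y)=\frac12-[y]_k$; moreover $\br{i\bar k}$ and $\br{j\bar k}$ are two boundary points of the cell $\br{k}$, so by Lemma~\ref{lem:geostaysincell} the segment joining them has length $\frac12$. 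I would then check that a $P_2$ geodesic through the pair $\{\br{i\bar k},\br{j\bar k}\}$ must reach $\br{i\bar k}$ before $\br{j\bar k}$ — it has to leave $\br i$, hence cross $\partial\br i$ at a point that is one of its (at most two, by Lemma~\ref{lem:1or2bridge}) bridge points, and this point is neither the corner $q_i$ nor $\br{j\bar k}\notin\br i$ — so that its length is exactly $(\frac12-[x]_k)+\frac12+(\frac12-[y]_k)=\frac32-[x]_k-[y]_k$. Concatenating geodesics $x\to\br{i\bar j}$ and $\br{i\bar j}\to y$ exhibits a path of length $1-[x]_j-[y]_i\leq1$, so $d(x,y)\leq1$.

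Next I would argue by contradiction. Suppose $P_2$ geodesics between $x$ and $y$ pass through three distinct pairs of bridge points. By Definition~\ref{P1-P2} these correspond to three distinct indices $k_1,k_2,k_3\in\{0,\dots,n\}\setminus\{i,j\}$, and for each $\ell$ the $P_2$ geodesic through the $k_\ell$-pair is length-minimizing, so $\frac32-[x]_{k_\ell}-[y]_{k_\ell}=d(x,y)\leq1$, whence $[x]_{k_\ell}+[y]_{k_\ell}\geq\frac12$. Summing over $\ell=1,2,3$ gives $\sum_{\ell}\bigl([x]_{k_\ell}+[y]_{k_\ell}\bigr)\geq\frac32$. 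On the other hand the $k_\ell$ are distinct and none equals $i$, so $\sum_{\ell}[x]_{k_\ell}\leq\sum_{k\neq i}[x]_k=1-[x]_i$; and $x\in\br i$ forces the first binary digit of $[x]_i$ to be $1$, i.e.\ $[x]_i\geq\frac12$ (Lemma~\ref{lem:dyadbary}), so $\sum_{\ell}[x]_{k_\ell}\leq\frac12$. Symmetrically $\sum_{\ell}[y]_{k_\ell}\leq1-[y]_j\leq\frac12$, so $\sum_{\ell}\bigl([x]_{k_\ell}+[y]_{k_\ell}\bigr)\leq1<\frac32$ — a contradiction. Hence at most two distinct pairs of bridge points can be used by $P_2$ geodesics.

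The inequalities at the heart of the proof are elementary; the one step that needs a little care is the claim that a $P_2$ geodesic through the $k$-th pair has length exactly $\frac32-[x]_k-[y]_k$, since this depends on the order in which the two bridge points are visited and on the geodesic not making a detour through a corner vertex. I expect this to be the main (though minor) obstacle, and I would dispatch it using Lemma~\ref{lem:geostaysincell} and Lemma~\ref{lem:1or2bridge}, noting that every boundary point of a maximal subcell other than a corner of $S_n$ is a bridge point.
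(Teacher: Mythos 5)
Your proof is correct and follows essentially the same route as the paper's: both compute the length of a $P_2$ geodesic through the $k$-th pair of bridge points as $\tfrac32-[x]_k-[y]_k$ via Proposition~\ref{dist-bary}, deduce $[x]_k+[y]_k\geq\tfrac12$ from $d(x,y)\leq1$, and then use $\sum_k([x]_k+[y]_k)=2$ together with $[x]_i\geq\tfrac12$, $[y]_j\geq\tfrac12$ to cap the number of admissible $k$ at two. Your extra care about the visiting order of the bridge points is a minor refinement of a step the paper treats as immediate.
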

\begin{proof}
Suppose there is a $P_2$ geodesic $\gamma$ from $x$ to $y$ through $\br{i\bar k}$ and $\br{j\bar k}$.  Applying Proposition \ref{dist-bary} we have $d(x, \br{i\bar k})=[\br{i\bar k}]_k-[x]_k=\half-[x]_k$ and similarly for $d(y,\br{j \bar k})$, so
\begin{equation*}
 d(x,y)=|\gamma|
= \half+ d(x,\br{i\bar k})+d(y,\br{j \bar k})
 = \frac{3}{2}-[x]_{k}-[y]_{k}.
\end{equation*}
However in the proof of Lemma~\ref{lem:1or2bridge} we saw that $d(x,y)\leq 1$, so $[x]_k+[y]_k\geq\half$.

Now $\sum_{k=0}^n [x]_k+ [y]_k =2$ from the definition of the barycentric coordinates, and $x\in\br{i}$, $y\in\br{j}$ implies $[x]_i\geq\half$ and $[y]_j\geq\half$, so the number of $k$ for which $[x]_k+[y]_k\geq\half$ is at most $2$, which implies there are at most two values of $k$ for which there is a $P_2$ geodesic through the bridge points $\br{i\bar k}$ and $\br{j\bar k}$.
\end{proof}

It is apparent in the preceding proof that the locations of $x$ and $y$ are tightly constrained when they admit geodesics through two distinct pairs of bridge points. The following result makes this precise.

\begin{lemma} \label{lemma:P2-unique}
Let $x\in \br{i}$ and $y\in \br{j}$ with $i \neq j$. If there are $P_2$ geodesics from 
$x$ to $y$ through two distinct pairs of bridge points, then there are exactly two $P_2$ geodesics from $x$ to $y$.
\end{lemma}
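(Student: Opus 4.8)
The plan is to combine the rigidity implicit in the proof of Lemma~\ref{2P2}, which will locate $x$ and $y$ almost completely, with a piece-by-piece count of geodesics using Proposition~\ref{prop:two-geodesics-n}. By Lemma~\ref{2P2}, if there are $P_2$ geodesics through two distinct pairs of bridge points, then these pairs must be $\{\br{i\bar{k_1}},\br{j\bar{k_1}}\}$ and $\{\br{i\bar{k_2}},\br{j\bar{k_2}}\}$ for four distinct indices $i,j,k_1,k_2$, and the computation in that proof shows $[x]_{k_\ell}+[y]_{k_\ell}\ge\half$ for $\ell=1,2$. Since $x\in\br i$ and $y\in\br j$ also force $[x]_i\ge\half$ and $[y]_j\ge\half$, adding these four inequalities and using $\sum_{m=0}^{n}\bigl([x]_m+[y]_m\bigr)=2$ together with nonnegativity of barycentric coordinates shows that each of the four is an equality and that $[x]_m=[y]_m=0$ for every $m\notin\{i,j,k_1,k_2\}$. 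Hence $[x]_i=[y]_j=\half$, $[x]_j=[y]_i=0$, the coordinates of $x$ are supported on $\{i,k_1,k_2\}$, and those of $y$ on $\{j,k_1,k_2\}$; by Lemma~\ref{lem:dyadbary} this means $x$ has an address $iu$ and $y$ an address $jv$ with $u,v\in\{k_1,k_2\}^{\infty}$.

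Next I would show that each admissible pair carries at most one $P_2$ geodesic. Fix $\ell$. By Corollary~\ref{cor:geodesicsexist}, any $P_2$ geodesic through $\br{i\bar{k_\ell}}$ and $\br{j\bar{k_\ell}}$ is the concatenation of a geodesic in $\br i$ from $x$ to $\br{i\bar{k_\ell}}$, the segment from $\br{i\bar{k_\ell}}$ to $\br{j\bar{k_\ell}}$, and a geodesic in $\br j$ from $\br{j\bar{k_\ell}}$ to $y$. Applying the similarity $F_i^{-1}$, which carries $\br i$ onto $S_n$, reduces counting geodesics in $\br i$ from $x$ to $\br{i\bar{k_\ell}}$ to counting geodesics in $S_n$ from the boundary point $\br{\overline{k_\ell}}$ to the point $\br u$. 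Because $u\in\{k_1,k_2\}^{\infty}$, Lemma~\ref{lem:dyadbary} gives $[\br u]_m=0$ for every $m\notin\{k_1,k_2\}$; as a digit $1$ in position $p$ of the expansion attached to any address of $\br u$ would force that coordinate to be $\ge 2^{-p}$, every address of $\br u$ lies in $\{k_1,k_2\}^{\infty}$. Consequently $\br u$ is never of the form $\br{\sigma a\bar b}=\br{\sigma b\bar a}$ with $k_\ell,a,b$ pairwise distinct --- if $\br u$ is a vertex, the two indices appearing in its pair of addresses both lie in $\{k_1,k_2\}$, which already contains $k_\ell$ --- so Proposition~\ref{prop:two-geodesics-n} yields a unique geodesic from $x$ to $\br{i\bar{k_\ell}}$. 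The third piece is handled symmetrically, so the pair indexed by $k_\ell$ carries at most one $P_2$ geodesic. Since by hypothesis each of the two pairs carries at least one, there are exactly two $P_2$ geodesics, and they are distinct because the one through $\br{i\bar{k_1}}$ cannot, by Lemma~\ref{lem:1or2bridge}, also pass through the distinct bridge point $\br{i\bar{k_2}}$ that lies on the other.

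I expect the pinning-down step to be the main obstacle: one must extract from the single scalar bound $d(x,y)\le 1$ the full conclusion that $x$ and $y$ lie on the one-parameter families $\{\half\,q_i+s\,q_{k_1}+(\half-s)\,q_{k_2}\}\cap S_n$ and $\{\half\,q_j+t\,q_{k_1}+(\half-t)\,q_{k_2}\}\cap S_n$, and then pass correctly to addresses via Lemma~\ref{lem:dyadbary}, bearing in mind that the coordinates involved are dyadic rationals, so that one must reason about \emph{every} address of $\br u$ rather than only the obvious one. Once the positions of $x$ and $y$ have been fixed, the geodesic count in the second paragraph is a routine application of Proposition~\ref{prop:two-geodesics-n}.
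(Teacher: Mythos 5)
Your proof is correct, and it follows the same overall architecture as the paper's -- first pin down $x$ and $y$, then show each admissible pair of bridge points carries exactly one $P_2$ geodesic and invoke Lemma~\ref{2P2} -- but both steps are executed by different means. For the localization, the paper writes $2=1+d(\br{i\bar k},\br{i\bar l})+d(\br{j\bar k},\br{j\bar l})\leq|\gamma_1|+|\gamma_2|\leq 2$ and concludes from equality in the triangle inequality that $x$ lies on the line segment joining $\br{i\bar k}$ to $\br{i\bar l}$ (and $y$ on the corresponding segment in $\br{j}$); your barycentric bookkeeping, saturating $\sum_m([x]_m+[y]_m)=2$ against the inequalities $[x]_i,[y]_j\geq\tfrac12$ and $[x]_{k_\ell}+[y]_{k_\ell}\geq\tfrac12$, extracts exactly the same conclusion, since support on $\{i,k_1,k_2\}$ with $[x]_i=\tfrac12$ is precisely membership in that segment. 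For piece-uniqueness the two arguments genuinely diverge: the paper observes that since $x$ sits on the unique geodesic (a Euclidean segment, by Lemma~\ref{lem:geostaysincell}) between two boundary points of $\br{i}$, its geodesics to each endpoint are forced to be subsegments of that line, hence unique; you instead pass to addresses, note that every address of $F_i^{-1}(x)$ is a word in $\{k_1,k_2\}$, and rule out the nonuniqueness criterion of Proposition~\ref{prop:two-geodesics-n} because the two letters at a doubly-addressed vertex would have to exhaust $\{k_1,k_2\}\ni k_\ell$. Your route is longer and requires the care you flag about reasoning over \emph{all} addresses of a vertex, but it has the minor virtue of making the set of possible positions of $x$ and $y$ completely explicit in coordinates; the paper's metric argument is shorter and avoids addresses entirely. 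Both are sound, and your final observation that the two resulting geodesics are distinct (via Lemma~\ref{lem:1or2bridge}) is a point the paper leaves implicit.
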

\begin{proof}
Let $\gamma_1$ be a $P_2$ geodesic passing through $\br{i\bar k}$ and $\br{j\bar k}$, and let 
$\gamma_2$ be a $P_2$ geodesic passing through $\br{i\bar l}$ and $\br{j\bar l}$. It follows from the proof of Lemma~\ref{lem:1or2bridge} that $|\gamma_1|=|\gamma_2|\leq 1$, so $|\gamma_1|+
|\gamma_2|\leq 2$.  Now by the triangle inequality
\begin{align*}
2&= 1+d(\br{i\bar{k}},\br{i\bar{l}})+d(\br{j\bar{k}},\br{j\bar{l}})\\
&\leq \half +d(\br{i\bar{k}},x)+d(x,\br{i\bar{l}}) + \half + d(\br{j\bar{k}},y)+d(y,\br{j\bar{l}})\\
&=  |\gamma_1|+|\gamma_2|
\leq2.
\end{align*}
Thus equality holds in the triangle inequality and $x$ lies on the geodesic connecting $\br{i\bar{l}}$ to $\br{i\bar{k}}$, and $y$ lies on the geodesic connecting $\br{j\bar{l}}$ to $\br{j\bar{k}}$, both of which are lines in $\mathbb{R}^n$. This shows the geodesics from $x$ to $\br{i\bar k}$ and to $\br{i\bar l}$ are unique and similarly for $y$; the result then follows from Lemma~\ref{2P2}
\end{proof}

Lemmas~\ref{2P2} and~\ref{lemma:P2-unique} provide us with sufficient restrictions on the number
of geodesics to prove our main result on geodesics in $S_n$, $n\geq3$.

\begin{thm} \label{thm:eight}
There exist at most eight distinct geodesics between any two points in $S_n$, $n\geq3$ and this bound is sharp.
\end{thm}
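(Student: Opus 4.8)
By Corollary~\ref{cor:geodesicsexist} we may assume the common cell of $x$ and $y$ is $S_n$ itself, so that $x\in\br{i}$ and $y\in\br{j}$ with $i\neq j$ and every geodesic between them is a $P_1$ geodesic or a $P_2$ geodesic in the sense of Definition~\ref{P1-P2}. The plan is to bound the number of $P_1$ geodesics by four, bound the number of $P_2$ geodesics by four (or by two in one exceptional configuration), add these, and then produce an explicit pair $x,y$ realising all eight.

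For the $P_1$ count, I would observe that a $P_1$ geodesic restricts to a geodesic from $x$ to the boundary point $\br{i\bar j}$ inside $\br{i}$ and to a geodesic from $\br{j\bar i}=\br{i\bar j}$ to $y$ inside $\br{j}$; since these cells are scaled copies of $S_n$, Proposition~\ref{prop:two-geodesics-n} allows at most two choices for each restriction, hence at most four $P_1$ geodesics. For the $P_2$ count I would apply Lemma~\ref{2P2}: the $P_2$ geodesics pass through at most two distinct pairs of bridge points. If two pairs occur, Lemma~\ref{lemma:P2-unique} forces exactly two $P_2$ geodesics, for at most six in all. If only the pair $\br{i\bar k},\br{j\bar k}$ occurs, then a $P_2$ geodesic is the concatenation of a geodesic $x\to\br{i\bar k}$ in $\br{i}$, the segment $\br{k\bar i}\to\br{k\bar j}$ in $\br{k}$ (forced by Lemma~\ref{lem:geostaysincell}), and a geodesic $\br{j\bar k}\to y$ in $\br{j}$; Proposition~\ref{prop:two-geodesics-n} gives at most two choices for the first and last, so at most four $P_2$ geodesics and at most eight geodesics altogether.

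For sharpness I would work in $S_3$ (the construction uses only four of the labels $0,\dots,n$ and every lemma above holds in each $S_n$, so this suffices) and take $x=\br{020\bar 3}=\br{023\bar 0}$ and $y=\br{121\bar 3}=\br{123\bar 1}$. Reading binary expansions of barycentric coordinates via Lemma~\ref{lem:dyadbary} gives $[x]_1=[y]_0=0$ and $[x]_2=[y]_2=\frac14$, so Proposition~\ref{dist-bary} yields $d(x,\br{0\bar 1})=d(\br{1\bar 0},y)=\frac12$ and $d(x,\br{0\bar 2})=d(\br{1\bar 2},y)=\frac14$; since $\br{0\bar 2}=\br{2\bar 0}$ and $\br{1\bar 2}=\br{2\bar 1}$ are boundary points of $\br{2}$ at distance $\frac12$, both the shortest $P_1$ path (through $\br{0\bar 1}$) and the shortest $P_2$ path (through $\br{0\bar 2}$ and $\br{1\bar 2}$) have length $1$, and one checks the remaining $P_2$ pairs give longer paths, so $d(x,y)=1$. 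I would then verify that $F_0^{-1}(x)=\br{20\bar 3}=\br{23\bar 0}$ has the exceptional form of Proposition~\ref{prop:two-geodesics-n} with respect to each of the boundary vertices $\br{\bar 1}$ and $\br{\bar 2}$, so there are two geodesics from $x$ to $\br{0\bar 1}$ and two from $x$ to $\br{0\bar 2}$, and symmetrically two from $y$ to each of $\br{1\bar 0}$ and $\br{1\bar 2}$. Concatenating halves yields four $P_1$ and four $P_2$ geodesics, all of length $1$; they are pairwise distinct, since the four of each type differ in their restriction to $\br{0}$ or to $\br{1}$, and a $P_2$ path through $\br{0\bar 2}$ and $\br{1\bar 2}$ cannot also pass through $\br{0\bar 1}$ without meeting three bridge points, contradicting Lemma~\ref{lem:1or2bridge}. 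Hence $x$ and $y$ are joined by exactly eight geodesics.

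I expect the main obstacle to be the sharpness construction, not the upper bound (which only assembles Proposition~\ref{prop:two-geodesics-n} with Lemmas~\ref{2P2} and~\ref{lemma:P2-unique}): one must position $x$ and $y$ so that the minimal $P_1$ length equals the minimal $P_2$ length \emph{and} so that the doubling criterion of Proposition~\ref{prop:two-geodesics-n} holds for $x$ simultaneously relative to $\br{0\bar 1}$ and to $\br{0\bar 2}$ (and likewise for $y$) --- which is impossible in $S_2$ by Corollary~\ref{cor:address-nonunique} but becomes available once a fourth simplex vertex is present.
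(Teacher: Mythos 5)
Your argument is correct and follows essentially the same route as the paper: the upper bound is assembled exactly as in the paper's proof, combining the four-$P_1$ count from Proposition~\ref{prop:two-geodesics-n} with the dichotomy on $P_2$ geodesics given by Lemmas~\ref{2P2} and~\ref{lemma:P2-unique}. Your sharpness example is, up to relabelling the vertices of $S_3$, the same configuration as the paper's Example~\ref{eg:eight} (each endpoint at distance $\frac12$ from the connecting bridge point and $\frac14$ from a second bridge point, with the two-address vertex form relative to both), and your verification of the eight equal-length paths checks out.
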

\begin{proof}
Let $x\in\br{i}$ and $y\in\br{j}$. As in the proof of Theorem~\ref{thm:five} we may assume $i\neq j$.
By Proposition~\ref{prop:two-geodesics-n}, there exist at most two geodesics from $x$ to $\br{i\bar j}$ and 
two geodesics from $\br{i\bar j}$ to $y$.  Concatenations of these pairs of geodesics yield a maximum of 
four $P_1$ geodesics.

By Lemma \ref{2P2}, there exist $P_2$ geodesics through at most two distinct pairs of bridge points.  If there are $P_2$ geodesics through two distinct pairs of bridge points, then by Lemma~\ref{lemma:P2-unique}, there are exactly two $P_2$ geodesics between $x$ and $y$. If, in addition, there exist $P_1$ geodesics, then there are at most six total geodesics.

If, instead, there are $P_2$ geodesics only through a single pair of bridge points $\br{i\bar k}$, $\br{j\bar k}$, then they are obtained by concatenating geodesics from $x$ to $\br{i\bar k}$ (of which there are at most two by Proposition~\ref{prop:two-geodesics-n}) and $y$ to $\br{j\bar k}$ (of which there are again at most two) with the interval from  $\br{i\bar k}$ to $\br{j\bar k}$.  This yields at most four $P_2$ geodesics between $x$ and $y$.  If, in addition, there exist $P_1$ geodesics, then there are at most eight geodesics in total.  Sharpness is demonstrated by Example~\ref{eg:eight}, which is written in $S_3$ and embeds in $S_n$ for all $n\geq 3$.
\end{proof}

\begin{example}\label{eg:eight}
Let $x = \br{202\bar 1} = \br{201 \bar 2}$ and $y = \br{303 \bar 1} = \br{301 \bar 3}$. 
Then the following are the geodesics from $x$ to $y$:
\begin{align*}
\gamma_1 &: x\to\br{202\bar3}\to\br{20\bar3}\to\br{2\bar3}=\br{3\bar2}\to\br{30\bar2}\to\br{303\bar2}\to y, \\
\gamma_2 &: x\to\br{202\bar3}\to\br{20\bar3}\to\br{2\bar3}=\br{3\bar2}\to\br{30\bar2}\to\br{301\bar2}\to y, \\
\gamma_3 &: x\to\br{201\bar3}\to\br{20\bar3}\to\br{2\bar3}=\br{3\bar2}\to\br{30\bar2}\to\br{303\bar2}\to y, \\
\gamma_4 &: x\to\br{201\bar3}\to\br{20\bar3}\to\br{2\bar3}=\br{3\bar2}\to\br{30\bar2}\to\br{301\bar2}\to y, \\
\gamma_5 &: x\to\br{202\bar0}\to\br{20\bar0}=\br{2\bar0}\to\br{3\bar0}=\br{30\bar0}\to\br{303\bar0}\to y, \\
\gamma_6 &: x\to\br{202\bar0}\to\br{20\bar0}=\br{2\bar0}\to\br{3\bar0}=\br{30\bar0}\to\br{301\bar0}\to y, \\
\gamma_7 &: x\to\br{201\bar0}\to\br{20\bar0}=\br{2\bar0}\to\br{3\bar0}=\br{30\bar0}\to\br{303\bar0}\to y, \\
\gamma_8 &: x\to\br{201\bar0}\to\br{20\bar0}=\br{2\bar0}\to\br{3\bar0}=\br{30\bar0}\to\br{301\bar0}\to y,
\end{align*}
where each geodesic is composed of the edges in $S_n$ joining each pair of consecutive points.

Note that each portion of these connecting $x$ or $y$ to a bridge point is geodesic because it is constructed by the algorithm in Proposition~\ref{dist-bary}.  The first four are $P_1$ paths, with length $2(\frac18+\frac18+\frac14)=1$.  The second four are $P_2$ paths, with length $2(\frac18+\frac18)+\frac12=1$.  We have constructed all the candidates to be geodesic from $x$ to $y$ (as described in Corollary~\ref{cor:geodesicsexist}), so the fact that they are all the same length ensures all are geodesics.
\end{example}

For use in later sections of this paper, it is convenient to note the following consequence of our analysis of the structure of geodesics.

\begin{thm}\label{thm:nullset}
The set of pairs of points $(x,y)\in S_n\times S_n$ such that there is more than one geodesic from $x$ to $y$ has zero $\mu_n\times\mu_n$-measure.
\end{thm}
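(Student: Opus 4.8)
The plan is to decompose the bad set according to the common cell of the pair, reduce by self-similarity to pairs whose common cell is all of $S_n$, and then use the geodesic structure established above to confine those pairs to a finite union of $\mu_n\times\mu_n$-null sets. The one genuinely analytic point — and the step I expect to be the main obstacle — is showing that the ``balanced'' locus, where a $P_1$ and a $P_2$ geodesic have equal length, is null; this comes down to non-atomicity of the pushforward of the standard measure under a barycentric-coordinate functional.

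\emph{Reduction to the full gasket.} Let $B\subseteq S_n\times S_n$ be the set of pairs $x\neq y$ joined by more than one geodesic and $B_0\subseteq B$ the subset of pairs whose common cell is $S_n$. By Corollary~\ref{cor:geodesicsexist} every geodesic from $x$ to $y$ lies in their common cell $\br w$, and $F_w$ is a similarity of $S_n$ onto $\br w$ that carries paths to paths (scaling lengths by $2^{-|w|}$); hence $x$ and $y$ are joined by exactly as many geodesics as $F_w^{-1}x$ and $F_w^{-1}y$, and the common cell of the latter pair is $S_n$. Thus $B\subseteq\bigcup_w(F_w\times F_w)(B_0)$, the union over all finite words $w$. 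Since self-similarity of the standard measure gives $\mu_n(F_w(A))=(\onpo)^{|w|}\mu_n(A)$ (in particular single points, lying in cells of measure $(\onpo)^m\to0$, are $\mu_n$-null), summing a geometric series yields $(\mu_n\times\mu_n)(B)\leq\tfrac{n+1}{n}(\mu_n\times\mu_n)(B_0)$, so it suffices to prove $(\mu_n\times\mu_n)(B_0)=0$.

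\emph{Confining $B_0$.} Fix $(x,y)\in B_0$. If $x$ or $y$ lies in $V_n^*$ then $(x,y)\in(V_n^*\times S_n)\cup(S_n\times V_n^*)$; otherwise $x\in\br i$ and $y\in\br j$ for a unique pair with $i\neq j$, and by Lemma~\ref{lem:1or2bridge} and Definition~\ref{P1-P2} every geodesic from $x$ to $y$ is a $P_1$ or a $P_2$ geodesic. Each $P_1$ geodesic, and each $P_2$ geodesic through a given pair of bridge points, splits into a geodesic from $x$ and a geodesic from $y$ to single bridge points; by Proposition~\ref{prop:two-geodesics-n} (applied in the relevant maximal subcell rescaled to $S_n$) two geodesics from a non-vertex to a boundary point of its cell cannot occur, so there is at most one $P_1$ geodesic and at most one $P_2$ geodesic through each pair of bridge points. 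Hence having more than one geodesic forces either (i) the existence of $P_2$ geodesics through two distinct pairs of bridge points, in which case Lemma~\ref{lemma:P2-unique} places $x$ on the segment between two bridge points of $\br i$, i.e.\ on an edge $F_i(\overline{q_kq_l})$ of $\br i$; or (ii) the coexistence of a $P_1$ geodesic through $\br{i\bar j}$ and a $P_2$ geodesic through $\br{i\bar k},\br{j\bar k}$ with $k\neq i,j$, which by Proposition~\ref{dist-bary} have lengths $1-[x]_j-[y]_i$ and $\tfrac32-[x]_k-[y]_k$, necessarily equal, so that $[x]_k-[x]_j+[y]_k-[y]_i=\half$. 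Altogether
\[
B_0\subseteq(V_n^*\times S_n)\cup(S_n\times V_n^*)\cup\Bigl(\bigcup_{i,k,l}F_i(\overline{q_kq_l})\times S_n\Bigr)\cup\Bigl(\bigcup_{i\neq j,\,k\neq i,j}\bigl\{(x,y):[x]_k-[x]_j+[y]_k-[y]_i=\half\bigr\}\Bigr),
\]
all unions finite.

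\emph{Each piece is null.} The set $V_n^*$ is countable, hence $\mu_n$-null, so the first two sets are $\mu_n\times\mu_n$-null. Each side $\overline{q_kq_l}$ of the simplex is covered by its two halves $F_k(\overline{q_kq_l})$ and $F_l(\overline{q_kq_l})$, so $\mu_n(\overline{q_kq_l})\leq\tfrac{2}{n+1}\mu_n(\overline{q_kq_l})$ and hence $\mu_n(\overline{q_kq_l})=0$ as $n\geq2$; thus $\mu_n(F_i(\overline{q_kq_l}))=0$ and the third set is $\mu_n\times\mu_n$-null. For the fourth, fix $x$; the $y$-slice is $\{y:[y]_k-[y]_i=c\}$ for a constant $c$. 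Since $y\mapsto[y]_k-[y]_i$ is affine, Proposition~\ref{prop:general-pushforward} (after subtracting the additive constant) shows that its pushforward of $\mu_n$ is a self-similar measure on $\mathbb{R}$ for the IFS of $n+1\geq3$ homotheties $t\mapsto\half(t+a_m)$, where $a_m=[q_m]_k-[q_m]_i\in\{-1,0,1\}$, whose fixed points are not all equal. Such a measure is non-atomic: if $\alpha>0$ were its largest atom mass, attained at a point $p$, the self-similar identity forces each $2p-a_m$ to carry mass $\alpha$, and applying this once more to the extreme such point produces an atom outside the bounded support, a contradiction. So every $y$-slice is $\mu_n$-null, and by Fubini the fourth set is $\mu_n\times\mu_n$-null. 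Hence $(\mu_n\times\mu_n)(B_0)=0$, which completes the proof; the argument is uniform in $n\geq2$, case (i) being vacuous when $n=2$.
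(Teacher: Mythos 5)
Your proof is correct, and its analytic core coincides with the paper's: after discarding vertex pairs (null, by countability of $V_n^*$), Proposition~\ref{prop:two-geodesics-n} forces geodesics from $x$ and from $y$ to bridge points to be unique, so non-uniqueness can only arise from an equal-length coincidence between distinct routes; that coincidence confines $(x,y)$ to a level set of a barycentric-type functional, whose pushforward of $\mu_n$ is self-similar and is shown non-atomic by the maximal-atom argument, whence Fubini finishes. Where you genuinely differ is the combinatorial packaging. The paper never rescales to the common cell: it covers the bad set by countably many configurations consisting of a pair of small cells $\br{w_x}\ni x$, $\br{w_y}\ni y$ together with two distinct connecting geodesics entering at boundary points $a_x\neq a_x'$, $a_y\neq a_y'$, and treats \emph{every} configuration uniformly via the level set $\{d(x,a_x)-d(x,a_x')=s\}$, which Proposition~\ref{dist-bary} identifies as a barycentric projection. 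You instead rescale to the common cell, invoke the $P_1$/$P_2$ classification there, and split into the two-$P_2$ case and the $P_1$-plus-$P_2$ case; your treatment of the former via Lemma~\ref{lemma:P2-unique} and the covering $\overline{q_kq_l}\cap S_n\subseteq F_k(\overline{q_kq_l})\cup F_l(\overline{q_kq_l})$ (so simplex edges are $\mu_n$-null for $n\geq 2$) is a self-contained geometric shortcut that the paper absorbs into its uniform level-set mechanism. Your version is arguably more explicit about \emph{which} coincidences can occur at the top level, at the cost of a case analysis; the paper's is shorter because one mechanism handles everything. Two minor remarks: iterating the atom argument at the extreme point, as you do, is actually a touch more careful than the paper's single application (which overlooks the configuration where all three preimages of the maximal atom land in the support); and the geometric series giving $\frac{n+1}{n}(\mu_n\times\mu_n)(B_0)$ is unnecessary, since a countable union of null sets is null.
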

\begin{proof}
First observe that sets of the form $\{a\}\times S_n$ or $S_n\times \{b\}$ are null for $\mu_n\times\mu_n$.  Taking the countable union over $a\in V_n^*$ and over $b\in V_n^*$ gives a null set.  Now observe from Proposition~\ref{prop:two-geodesics-n} that $x$ is connected to any boundary of a cell containing $x$ by more than one geodesic then $(x,y)$ is in one of these null sets and similarly for $y$.  Accordingly, we can assume that there is a unique geodesic from $x$ to any boundary point of a cell containing $x$, and similarly for $y$.

In this circumstance the only way $x$ and $y$ can be joined by more than one geodesic involves at least one $P_2$ geodesic.  Precisely,  there is a cell $\br{w_x}$ containing $x$ and a cell $\br{w_y}$ containing $y$, these cells are joined by distinct geodesics $\gamma$ and $\gamma'$ such that $\gamma$ enters $\br{w_x}$ at $a_x$ and $\br{w_y}$ at $a_y$, and $\gamma'$ enters $\br{w_x}$ at $a'_x$ and $\br{w_y}$ at $a'_y$.  Moreover the fact that these have equal length may be written as $d(x,a_x)+d(y,a_y)+|\gamma|=d(x,a'_x)+d(y,a'y)+|\gamma'|$. There are countably many choices of pairs $\br{w_x}$, $\br{w_y}$ and, for each pair, finitely many possibilities for $\gamma$ and $\gamma'$, so to prove the set of pairs $(x,y)$ joined by non-unique geodesics of this type is null we need only prove that for such a pair of cells, geodesics and boundary points, one has
\begin{equation}\label{eqn:nullseteqn}
	 (\mu_n\times\mu_n)\bigl( \{(x,y): d(x,a_x)-d(x,a'_x)+d(y,a_y)-d(y,a'_y)=|\gamma'|-|\gamma|\}\bigr)=0.
	\end{equation}
Moreover, since $\mu_n\times\mu_n$ is a product measure, by Fubini's theorem it is sufficient that this set has zero $\mu_n$ measure for each fixed $y$. More precisely, since fixing $y$ fixes the value $d(y,a_y)-d(y,a'_y)$, it is enough that for any $s$,  $\mu_n\bigl(\{ x: d(x,a_x)-d(x,a'_x)=s\}\bigr)=0$.  Clearly the question of whether this set is null is invariant under rescaling the cell $\br{w_x}$ to be $S_n$, and by symmetry we may assume $a_x=q_0$, $a'_x=q_1$; we assume this is the case.

Now we use Proposition~\ref{dist-bary} to write $d(x,q_0)$ as the projection on the barycentric coordinate corresponding to $q_0$, and writing $d(x,q_1)$ in the same manner we find that $d(x,q_0)-d(x,q_1)$ is the projection of $x-q_0$ on the unit vector $q_1-q_0$ which is parallel to an edge of the simplex.  Parametrizing the position along the line from $q_0$ to $q_1$ by $[0,1]$ and writing $\pi:S_n\to[0,1]$ for the projection on $q_1-q_0$ in this parametrization, we see that the measure $\mu_n\bigl(\{ x: d(x,q_0)-d(x,q_1)\in E\}\bigr)=\mu_n\circ\pi^{-1}(E)$ is the pushforward measure $\pi_*\mu_n$ of $\mu_n$ under $\pi$.  However this is a self-similar measure on $[0,1]$ by Proposition~\ref{prop:general-pushforward}, with the self-similarity relation
\begin{equation*}
	\pi_*\mu_n(s) = \frac1{n+1} \Bigl( \pi_*\mu_n (2s) + \pi_*\mu_n(2s-1) + (n-1) \pi_*\mu_n\bigl(2s-\frac12\bigr) \Bigr).
	\end{equation*}
This measure is non-atomic.  To see this, suppose the contrary.  It is a probability measure, so there is an atom which attains the maximal mass among atoms; we let $s_0$ be the location of such an atom. Then the self-similarity relation says
\begin{equation*}
	(n+1)\pi_*\mu_n(\{s_0\}) = \pi_*\mu_n (\{2s_0\}) + \pi_*\mu_n(\{2s_0-1\}) + (n-1) \pi_*\mu_n\bigl(\{2s_0-\frac12\}\bigr)
	\end{equation*}
but at most two of the points $2s_0$, $(2s_0-1)$ and $(2s_0-\frac12)$ are in $[0,1]$, and the atoms at these points have mass not exceeding $\mu_n(\{s_0\})$, so that
\begin{equation*}
	(n+1)\pi_*\mu_n(\{s_0\}) \leq n \pi_*\mu_n(\{s_0\}) 
	\end{equation*}
and thus $\pi_*\mu_n(\{s_0\})=0$.

The fact that $\pi_*\mu_n$ is non-atomic says precisely that  $\mu_n\bigl(\{ x: d(x,q_0)-d(x,q_1)=s\}\bigr)=0$ for every choice of $s$. As previously noted, this ensures the measure of the set in~\eqref{eqn:nullseteqn} is zero, and by taking the union over the countably many possible cells and geodesics connecting their boundary points we complete the proof.
\end{proof}

\begin{remark}
The paper~\cite{GuYeXi} states Theorem~\ref{thm:nullset} as their Theorem~1.3, but it appears to us that something is missing in the proof.  Specifically, the authors reduce to the situation where, in our notation, $x\in\br{0}$, $y\in\br{1}$ and there are a $P_1$ and a $P_2$ geodesic between these points (see the reasoning following their Lemma~4.3, where they say that there are vertices from our $V_1$ which they call $b_1$, $b_i$ and $b'_i$, such that 
$$ d(x,b_1)+d(b_1,y)=d(x,b_i)+d(b_i,b'_i)+d(b_i,y)$$
with $d(b_i,b'_i)=\frac12$.)  Using their Lemmas~4.4--4.7 they appear to be saying, in the proof of Proposition~4.8, that then $x$ and $y$ are points of $V_*$.  (They write this as $x=\sigma1^\infty$, $y=\sigma'0^\infty$.)  Yet we can give an example of points $x$ and $y$ in $S_2$ that are as described above but are not from $V_*$, as follows.  Consider the three line segments forming a triangle around the central hole of the gasket $S_2$. These have vertices with addresses $\br{0\bar{1}}$, $\br{0\bar{2}}$ and $\br{1\bar{2}}$.  Take a point $x$ in $\br{0}$ at distance $s$ from $\br{0\bar{2}}$ where $s$ is not a dyadic rational (so $x\notin V_*$) and $s<\frac14$.  Take $y$ in $\br{1}$ at distance $\frac14-s$ from $\br{1\bar{2}}$. Evidently, any geodesic between these points lies on the three line segments.  Now the distance from $x$ to $y$ through the points $\br{0\bar{2}}=\br{2\bar{0}}$ and $\br{2\bar{1}}=\br{1\bar{2}}$ is $s+\frac12+(\frac14-s)=\frac34$, because it includes the edge through the cell $\bar{2}$.  It is equally apparent that the distance via $\br{0\bar{1}}=\br{1\bar{0}}$ is $\frac12-s + \frac12-(\frac14-s)=\frac34$.  So there are two geodesics joining these points but neither point is in $V_*$.
\end{remark}


\section{Interpolation}\label{sec:interp}

In Euclidean space the barycenter of sets $A$ and $B$ is
$(1-t)A+tB=\{(1-t)a+tb:a\in A, b\in B\}$.  In a geodesic space the natural
analogue, introduced in~\cite{cordero2001}, is the set defined by
\begin{equation}\label{eq:Zt}
	\begin{split}
	\tilde Z_t(a,b)&=\{x: d(a,x)=td(a,b) \text{ and } d(x,b)=(1-t)d(a,b)\},\\
	\tilde Z_t(A,B)&= \{ \tilde Z_t(a,b):a\in A,b\in B\}.
	\end{split}
\end{equation}
From our results on geodesics we know that $\tilde Z_t(a,b)$ is a single point for
almost all $a$ and $b$, but in any case contains at most $8$ points.

The classical Brunn-Minkowski inequality in $\mathbb{R}^n$ says
$\vol(\tilde{Z}_t(A,B))^{1/n}\geq (1- t)\vol(A)^{1/n}+t\vol(B)^{1/n}$.  This convexity
result has many applications, for which we refer to the survey~\cite{Gardner}.
Our first result makes it clear that no such result can be true for the
self-similar measure $\mu_n$ on $S_n$.

\begin{prop}\label{prop:onedim}
    For $A,B\subset S_n$ the set $\cup_{t\in(0,1)} \tilde Z_t(A,B)$ has
    Hausdorff dimension at most $1$ and hence $\mu_n$-measure zero.
\end{prop}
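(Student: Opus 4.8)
The plan is to confine $\bigcup_{t\in(0,1)}\tilde Z_t(A,B)$ inside a countable union of line segments built from the cell structure of $S_n$, which is automatically small in both Hausdorff dimension and $\mu_n$-measure. The starting point is that every point of this set lies on a geodesic: if $x\in\tilde Z_t(a,b)$ then $d(a,x)+d(x,b)=td(a,b)+(1-t)d(a,b)=d(a,b)$, so joining a geodesic from $a$ to $x$ to a geodesic from $x$ to $b$ (both exist by Corollary~\ref{cor:geodesicsexist}) yields a geodesic from $a$ to $b$ through $x$. Hence $\bigcup_{t\in(0,1)}\tilde Z_t(A,B)\subseteq\mathcal G$, where $\mathcal G$ is the union of the images of all geodesics of $S_n$.

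The main step is to show $\mathcal G\subseteq\mathcal E$, where $\mathcal E$ is the union, over all cells $\br{w}$ of $S_n$, of the finitely many line segments joining two boundary points of $\br{w}$ (the \emph{sides} of $\br{w}$). By Corollary~\ref{cor:geodesicsexist} together with self-similarity, it suffices to treat a geodesic $\gamma$ whose endpoints $x\in\br{i}$, $y\in\br{j}$ have common cell $S_n$, so $i\neq j$. By Lemma~\ref{lem:1or2bridge} and Definition~\ref{P1-P2}, $\gamma$ is a $P_1$ or $P_2$ geodesic, so it decomposes as a geodesic from $x$ to a boundary point of $\br{i}$, possibly a segment joining two bridge points, and a geodesic from a boundary point of $\br{j}$ to $y$. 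The first and last pieces are concatenations of countably many sides, because this is exactly the output of the construction in the proof of Proposition~\ref{dist-bary}: there each nontrivial segment $[x_k,x_{k+1}]$ joins two boundary points of the cell $\br{w_k i}$. The middle segment of a $P_2$ geodesic joins $\br{i\bar k}=\br{k\bar i}$ to $\br{j\bar k}=\br{k\bar j}$, two boundary points of the cell $\br{k}$, so it too is a side. Thus $\mathcal G\subseteq\mathcal E$.

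Finally, $\mathcal E$ is a countable union of line segments, each of Hausdorff dimension $1$; by countable stability of Hausdorff dimension, $\bigcup_{t\in(0,1)}\tilde Z_t(A,B)\subseteq\mathcal E$ has Hausdorff dimension at most $1$. To see $\mu_n(\mathcal E)=0$ it suffices to check $\mu_n(L)=0$ for one side $L$, which by self-similarity we may take to be $\overline{q_0 q_1}\subseteq S_n$; by Lemma~\ref{lem:dyadbary} a point of $S_n$ lies on $\overline{q_0 q_1}$ only if its address uses just the letters $0$ and $1$, hence lies in $\bigcup_{w\in\{0,1\}^m}\br{w}$ for every $m$, a set of $\mu_n$-measure at most $(2/(n+1))^m\to 0$ since $n\geq 2$. (This last conclusion can also be read off the dimension bound, as $\mu_n$ is Ahlfors $\log_2(n+1)$-regular with $\log_2(n+1)>1$.) I expect no real obstacle: everything geometric is already in Section~\ref{sec:geodesics}, and the only care needed is in checking that the bridge-to-bridge segment of a $P_2$ geodesic is a cell side and in the one-line measure estimate.
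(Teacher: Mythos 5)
Your proof is correct and follows essentially the same route as the paper's: both arguments confine $\bigcup_{t\in(0,1)}\tilde Z_t(A,B)$ to the countable collection of Euclidean line segments joining vertices of $V_n^*$ (equivalently, cell sides) produced by the geodesic construction of Proposition~\ref{dist-bary}, and then conclude via countable stability of Hausdorff dimension. You spell out two steps the paper leaves implicit---that any point of $\tilde Z_t(a,b)$ must lie on a geodesic from $a$ to $b$, and the explicit $(2/(n+1))^m$ estimate showing a single segment is $\mu_n$-null---but the underlying approach is the same.
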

\begin{proof}
    We have shown that all geodesics are constructed as in
    Corollary~\ref{cor:geodesicsexist} using the method from the proof of
    Proposition~\ref{dist-bary}.  In that argument, the set
    $\cup_{t\in(0,1)}\tilde{Z}_t(a,b)$ lies entirely on the countable collection of
    Euclidean line segments joining vertices from $V_n^*$, no matter what $a$
    and $b$ are.  Hence $\cup_{t\in(0,1)} \tilde Z_t(A,B)$ also lies in this countable
    collection of Euclidean line segments, which is a set of Hausdorff dimension
    $1$.
\end{proof}

We note in passing an amusing consequence of the preceding proof which
emphasizes the difference with the Euclidean case. We call a set $A$ convex when
$a,b\in A$ implies $\tilde Z_t(a,b)\subset A$ for all $t$, and observe that the
intersection of convex sets is convex, so $A$ has a smallest closed convex
superset, called its convex hull.
\begin{cor}
The convex hull of a closed set $A$ has the same $\mu_n$ measure as $A$.
\end{cor}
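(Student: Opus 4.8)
The plan is to use the fact, extracted from the proof of Proposition~\ref{prop:onedim}, that every geodesic in $S_n$ lies on one fixed countable family of line segments; enlarging a set to its convex hull can then only adjoin points of this family, which is $\mu_n$-negligible. Write $\mathcal L$ for the countable collection of Euclidean line segments in $S_n$, each joining two vertices of $V_n^*$, that appears in that proof. If $p,q\in S_n$ and $x\in\tilde Z_t(p,q)$ for some $t\in(0,1)$, then $x$ lies strictly between $p$ and $q$ on some geodesic, so $x\in\mathcal L$; hence $\bigcup_{t\in(0,1)}\tilde Z_t(p,q)\subseteq\mathcal L$ for every pair $p,q$. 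For $S\subseteq S_n$ put $G(S)=\{x:\ d(p,x)+d(x,q)=d(p,q)\text{ for some }p,q\in S\}$, the union of the images of geodesics joining points of $S$. Then $G$ is monotone, $S\subseteq G(S)=S\cup\bigcup_{p,q\in S}\bigcup_{t\in(0,1)}\tilde Z_t(p,q)$, and $G(S)\setminus S\subseteq\mathcal L$; moreover $S$ is convex in the sense of the corollary exactly when $G(S)=S$. Consequently the set-theoretic convex hull $\operatorname{conv}(A)$, i.e. the smallest convex superset of $A$, equals $\bigcup_{k\ge0}G^{k}(A)$, and an easy induction (using $G(T)\setminus T\subseteq\mathcal L$ at each step) gives $\operatorname{conv}(A)\setminus A\subseteq\mathcal L$.

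Next I would show that $G$ maps compact sets to compact sets. Suppose $S$ is compact and $x_j\in G(S)$, say $x_j$ lies on a geodesic from $p_j$ to $q_j$ with $p_j,q_j\in S$; passing to subsequences, $p_j\to p\in S$ and $q_j\to q\in S$ by compactness of $S$, and $x_j\to x$ by compactness of $S_n$. Since the intrinsic metric $d$ induces the Euclidean subspace topology on $S_n$, it is continuous there, so letting $j\to\infty$ in $d(p_j,x_j)+d(x_j,q_j)=d(p_j,q_j)$ yields $d(p,x)+d(x,q)=d(p,q)$, i.e. $x\in G(S)$. Thus $A\subseteq G(A)\subseteq G^{2}(A)\subseteq\cdots$ is an increasing chain of compact sets whose union is $\operatorname{conv}(A)$.

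The remaining, and main, difficulty is that the convex hull in the corollary must be \emph{closed}, so one must control $\overline{\operatorname{conv}(A)}$; a crude bound is useless, since $\overline{\mathcal L}=S_n$. The cleanest route is to prove that the chain $\{G^k(A)\}$ stabilizes, i.e. $G^{K+1}(A)=G^{K}(A)$ for some finite $K$, so that $\operatorname{conv}(A)=G^{K}(A)$ is compact, hence already closed, hence equal to the convex hull of the corollary. This is exactly the step I expect to be the obstacle, and it is where the structure theory of Section~\ref{sec:geodesics} is needed: by Lemma~\ref{lem:1or2bridge} (and its self-similar rescalings, together with Definition~\ref{P1-P2}) a geodesic meets any given cell in an arc through at most two of its bridge points, so the combinatorial type of any new arc produced by an application of $G$ inside a fixed cell is one of finitely many possibilities; one then argues that no genuinely new arcs can appear inside a cell after finitely many iterations. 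Granting stabilization, $\operatorname{conv}(A)$ is the convex hull of $A$, and since $\operatorname{conv}(A)\setminus A\subseteq\mathcal L$ has Hausdorff dimension at most $1$, it has $\mu_n$-measure zero exactly as in Proposition~\ref{prop:onedim}. Therefore the convex hull and $A$ have the same $\mu_n$-measure.
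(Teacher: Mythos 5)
Your reduction to the one--dimensional family $\mathcal L$ is sound, and the verification that $G$ preserves compactness and that $\bigcup_k G^k(A)$ is the smallest convex superset of $A$ are both correct. But the argument has a genuine gap exactly where you flag it: the corollary concerns the smallest \emph{closed} convex superset, and your route requires the chain $G^k(A)$ to stabilize after finitely many steps so that the algebraic hull is already compact. You do not prove stabilization, and it is far from clear that it holds: each application of $G$ can create new arcs (e.g.\ geodesics between a point on a $P_2$ arc and a point on a $P_1$ arc cut across and add segments not previously present), and when $A$ is an infinite closed set these new arcs appear at arbitrarily small scales near accumulation points of $A$. The finiteness heuristic you invoke (finitely many combinatorial types of arcs per cell) does not control the \emph{endpoints} of the sub-segments produced, which can proliferate under iteration. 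Since $\overline{\mathcal L}=S_n$, as you note, no crude closure bound rescues the argument; without stabilization the proof does not close.

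The paper avoids this issue entirely by never iterating. It builds a closed convex superset in one step: let $V$ be the set of vertices of $V_n^*$ lying on geodesics between points of $A$, and let $B$ be the union of all geodesics between pairs of points of $V$. The key structural observation is that any $x\in V$ (or $x\in B$) with $d(x,A)\geq\delta$ must lie on an edge of a cell of size at least $\delta$ that meets $A$, and there are only finitely many such edges; hence $\{b\in B: d(b,A)\geq\delta\}$ is closed for every $\delta>0$, so $A\cup B$ is closed. Convexity of $A\cup B$ is then checked directly by expressing geodesics between points of $A\cup B$ as (limits of) unions of geodesics between points of $V$. Since $A\cup B$ is a closed convex superset of $A$ and $B$ is a countable union of line segments, the closed convex hull has the same $\mu_n$-measure as $A$. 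If you want to complete your version, you should either prove the stabilization claim (which looks harder than the original problem) or replace it with a one-shot construction of a closed convex superset along these lines.
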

\begin{proof}
The essential idea of the proof is to take a closed convex set by adjoining to
$A$ a portion of the union of line segments in Proposition~\ref{prop:onedim}
which accumulates only at $A$.

Given $A$, let $V=V_n^*\cap(\cup_{t\in(0,1)} \tilde Z_t(A,A)$ be the set of
vertices on geodesics between points of $A$.  Observe that if $x\in V$ and
$d(x,A)\geq\delta>0$ then from the construction of all geodesics in
Corollary~\ref{cor:geodesicsexist} and Proposition~\ref{dist-bary} it must
be that $x$ lies on the edge of a cell which intersects $A$ and has size at
least $\delta$.   Let $B$ consist of all geodesics between all pairs of
points in $V$.  Note that if $x\in B$ and $d(x,A)\geq\delta$ it lies on one
of the finitely many edges of cells of size at least $\delta$, and thus
$\{b\in B:d(b,A)\geq\delta\}$ is closed. It  follows that any accumulation
point of $B$ that is at a positive distance from $A$ is in $B$, and thus
that $A\cup B$ is closed.

Let us consider the geodesics between points of $A\cup B$.  If $a,b\in B$ then
the geodesic from $a$ to $b$ is in $B$ because it is a subset of a geodesic
between points of $V$.  If $a,b\in A$ then the geodesic between them is the
increasing union of geodesics between points of $V$ with ends that
accumulate to $a$ and $b$, so is also in $A\cup B$.  Similarly, if $a\in A$
and $b\in B$ the geodesic between them is a union of this type except that
one part of the geodesic between points of $V$ is terminated at $b$.  So
$A\cup B$ is convex.

Since $A\cup B$ is closed and convex it must contain the convex hull of $A$.
However, $B$ is a countable union of line segments, so it is one-dimensional
and $\mu_n(A\cup B)=\mu_n(A)$. 
\end{proof}

Proposition~\ref{prop:onedim} tells us that there is no hope that a power of
$\mu_n(\tilde Z_t(A,B))$ is convex in $t$, but it remains possible that the
measure-theoretic properties of $\tilde Z_t(A,B)$ reflect some aspects of the
geometric structure of the Sierpinski gasket $S_n$. We record some definitions
and basic notions that are useful in investigating this question.

\begin{defn} \label{def:commonpath}
    Let $A,B\subseteq S_n$. A  \textit{common path} $\hat\gamma:[0,1]\to S_n$
    from $A$ to $B$ is a finite length path  such that  for each $a\in A$ and
    $b\in B$ there is a  geodesic $\hat\gamma_{a\to b}$ and $t_1,t_2\in[0,1]$,
    called the entry and exit times, with $\hat\gamma=\hat\gamma_{a\to b}([t_1,
    t_2])$. The \textit{initial} and \textit{final entry times} of
    $\hat{\gamma}$ are, respectively, the infimium $t^i_1$ and supremum  $t^f_1$
    of the set of entry times over $a\in A$ and $b\in B$. The \textit{initial}
    and \textit{final exit times} $t^i_2$ and $t^f_2$ are similarly defined from
    the set of exit times.
    If $\hat{\gamma}$ is a maximal common path under inclusion and $t_1^f<t_2^i$,
    we call $\hat{\gamma}$ a \textit{regular common path}.
\end{defn}

For arbitrary $A$ and $B$ there need not be a regular common path, but in many
simple cases either there is such a path or there is a natural way to decompose
$A$ and $B$ so as to obtain such paths between components. Indeed, it is easy to
see that there is a regular common path between disjoint cells that are
sufficiently small compared to their separation.  Using this, if $A$ and $B$ are
disjoint one may take a union of cells covering $A$ and another union of cells
covering $B$ so that any pair of cells, one from the first union and the other
from the second, admits a regular common path.  Of equal importance is the fact
that understanding regular common paths is sufficient for studying some aspects
of the transport of measure via the set $\tilde Z_t(A,B)$, at least for fairly
simple choices of $A$ and $B$.  One way to see this is as follows.  Begin by
deleting a nullset of $X\times X$ from Theorem~\ref{thm:nullset} so that
geodesics are unique, then observe that if $A$ and $B$ are separated by a
distance $3\epsilon>0$ then these geodesics must each contain one of finitely
many edges of size bounded below by $\epsilon$. It follows that $A\times B$ can
be decomposed into finitely many sets $A_j\times B_j$ so that geodesics from
$A_j$ to $B_j$ have some piece of common path.  See also the remark following
Definition~\ref{eta-def}.

When there is a common path it is natural to consider only that part of $\tilde{Z}_t$ that 
lies on the common path.
\begin{defn}
For $A$ and $B$ that admit a common path and $a\in A$, $b\in B$, define a 
modified interpolant $Z_t(a,b)=\hat\gamma_{a\to b}(t)$, i.e., the point in $\tilde{Z}_t$
which lies on the geodesic included in the common path.
\end{defn}

This modified interpolant is  is indeed a function, and for all $t\in[t_1^f,t_2^i]$ we have
$Z_t(A,B)\subset\hat{\gamma}$, as described in the following result.

\begin{prop} \label{interval-prop}
Let $A, B$ be connected subsets of $S_n$ for which  $\hat\gamma$ is a regular
common path. For each $t\in[t^f_1,t^i_2]$ there exists an interval $I_t
\subseteq [0,1]$ such that $Z_t(A,B) = \hat\gamma(I_t)$.
\end{prop}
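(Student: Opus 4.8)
The plan is to express, for each $t\in[t^f_1,t^i_2]$, the interpolant point $Z_t(a,b)$ as $\hat\gamma$ evaluated at a parameter $s_t(a,b)\in[0,1]$ depending continuously on $(a,b)$, and then to take $I_t=\{s_t(a,b):a\in A,\ b\in B\}$, which is an interval because it is the continuous image of the connected set $A\times B$.

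I would first set up notation and establish an orientation statement. Write $\ell=|\hat\gamma|$ and $p=\hat\gamma(0)$, $q=\hat\gamma(1)$; we may assume $\ell>0$, since otherwise $\hat\gamma$ is a point and the claim is trivial. For $a\in A$, $b\in B$ the geodesic $\hat\gamma_{a\to b}$ contains $\hat\gamma$, hence passes through both $p$ and $q$; since a subpath of a geodesic is a geodesic, $d\bigl(a,\hat\gamma_{a\to b}(\tau)\bigr)=\tau\,d(a,b)$ is strictly increasing in $\tau$, so exactly one of $p,q$ is met first and the portion of $\hat\gamma_{a\to b}$ joining them is a geodesic of length $\ell$; thus $d(a,p)-d(a,q)\in\{-\ell,\ell\}$. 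Because $a\mapsto d(a,p)-d(a,q)$ is continuous and $A$ is connected, this value is constant on $A$; since the desired conclusion is unchanged under replacing $\hat\gamma(s)$ by $\hat\gamma(1-s)$ (which merely replaces $I_t$ by $1-I_t$, again an interval in $[0,1]$), we may assume it equals $-\ell$, i.e.\ the geodesic from every $a\in A$ meets $\hat\gamma$ first at $p$. Then, as $\hat\gamma_{a\to b}$ has constant speed $d(a,b)$ on $[0,1]$, its entry and exit times (Definition~\ref{def:commonpath}) are $t_1=d(a,p)/d(a,b)$ and $t_2=\bigl(d(a,p)+\ell\bigr)/d(a,b)$, with $d(a,b)=d(a,p)+\ell+d(q,b)$.

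Granting this, fix $t\in[t^f_1,t^i_2]$ and $(a,b)\in A\times B$. Then $t_1\le t^f_1\le t\le t^i_2\le t_2$, so $Z_t(a,b)=\hat\gamma_{a\to b}(t)$ lies on $\hat\gamma$ at arclength $t\,d(a,b)-d(a,p)$ from $p$; since $\hat\gamma$ has constant speed $\ell$ on $[0,1]$ with $\hat\gamma(0)=p$,
\[
	Z_t(a,b)=\hat\gamma\bigl(s_t(a,b)\bigr),\qquad s_t(a,b):=\frac{t\,d(a,b)-d(a,p)}{\ell},
\]
and the inequalities $t_1\le t\le t_2$ say exactly that $s_t(a,b)\in[0,1]$ (so, incidentally, $Z_t(a,b)$ is well defined here even where $\hat\gamma_{a\to b}$ is not unique, as $s_t(a,b)$ depends only on $d(a,b)$ and $d(a,p)$). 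For fixed $t$ the map $s_t\colon A\times B\to[0,1]$ is continuous, because $d$ is continuous and $\ell>0$, and $A\times B$ is connected since $A$ and $B$ are; hence $I_t:=s_t(A\times B)$ is a connected subset of $[0,1]$, that is, an interval, and $Z_t(A,B)=\hat\gamma(I_t)$ by construction. I expect the orientation statement in the second paragraph to be the only genuine obstacle: it is what makes a single formula for $Z_t(a,b)$ valid for all $(a,b)$ at once, and it is where connectedness of $A$ and monotonicity of $d(a,\cdot)$ along geodesics are essential; everything after it is a routine continuity-and-connectedness argument.
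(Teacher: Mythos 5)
Your proof is correct and follows essentially the same route as the paper's: continuity of $Z_t(\cdot,\cdot)$ on the connected set $A\times B$ forces $Z_t(A,B)$ to be a connected subset of $\hat\gamma$, hence of the form $\hat\gamma(I_t)$. The paper's proof is a two-line version of this; your explicit formula $s_t(a,b)=\bigl(t\,d(a,b)-d(a,p)\bigr)/\ell$ and the orientation argument simply supply the details (why $Z_t(a,b)$ lands on $\hat\gamma$ for $t\in[t_1^f,t_2^i]$ and why a single formula works for all $(a,b)$) that the paper leaves implicit.
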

\begin{proof}
Fix $t\in[t^f_1,t^i_2]$. Continuity of $d(x,y)$ implies $Z_t(a,b)$ is continuous
on the connected set $A\times B$ and thus $Z_t(A,B)$ is a connected subset
of $\hat{\gamma}$. Such subsets have the stated form.
\end{proof}

\begin{defn}\label{def:Ht}
In the circumstances of Proposition~\ref{interval-prop}, let $H_t:(0,1)\to
Z_t(A,B)=\hat\gamma(I_t)$ be the parametrization obtained from the
increasing linear surjection $(0,1)\to I_t$, which may also be defined at
$0$ or $1$, followed by $\hat\gamma$.
\end{defn}

Motivated by the Brunn-Minkowski inequality, our basic object of study will be
the measure on $\hat{\gamma}$ that is induced by the natural measures on $A$ and
$B$ via the interpolant $Z_t$.  In the next two sections we consider two basic
cases: when $A$ is a  cell with measure $\mu_n$ and $B=\{b\}$ is a point with
Dirac mass, and when $A$ and $B$ are both cells with measure $\mu_n$.

\section{Cell-to-point Interpolation of Measure}
\label{sec:interp-point}

In this section we consider $Z_t(A,b)$, where $A$ is a cell not containing $b$,
for which we use the notation $Z_{t,b}(a)=Z_t(a,b)$.  We assume that all
geodesics from points $a\in A=\br{w}$ to $b$ pass through a single boundary
point $\dot a$, which we call the {\em  entry point} of $A$ and that the
geodesic $\hat{\gamma}$ from $\dot a$ to $b$ is unique.   Then  $\hat{\gamma}$
is a common path from $A$ to $b$, and we note that in this situation all exit
times coincide,  $t_2^i=t_2^f=1$, so $\hat{\gamma}$ is regular.  Then for
$t\in[t_1^f,1]$ we know from Proposition~\ref{interval-prop} that $Z_t(A,b)$ is
an interval and $Z_{t,b}$ is a function, permitting us to study interpolation by
considering the pushforward of $\mu_n$ under $Z_{t,b}$, as in the following
definition. Note, too,  that $t_1^f=(1+2^{|w|}d(\dot a,b))^{-1}$.  

\begin{defn} \label{eta-def}
For $t\in[0,1]$ let $\eta_t(X) = \mu_n(Z_{t,b}^{-1}(X))$,
for all Borel sets $X\subseteq\hat\gamma$.
\end{defn}

\begin{remark}
    The definition of $\eta_t$ depends on our assumptions regarding the common
    path, and one might think this could be avoided by instead studying
    something like $\eta'_t(X) = \mu_n\bigl(\{a:Z_t(a,b)\cap
    X\neq\emptyset\}\bigr)$.  However, a little thought shows that there is not
    much loss of generality in studying the simpler quantity $\eta_t$ instead.
    We made two assumptions: that all geodesics from points $a\in A=\br{w}$ to
    $b$ pass through a single boundary point $\dot a$ of $A$,  and that the
    geodesic from $\dot a$ to $b$ is unique. The latter can fail only if $b$ is
    from the easily described subset of $V_*$ for which the path $\dot a$ to $b$
    is non-unique, but in this case $\eta'_t$ is a sum of copies of $\eta_t$ is
    duplicated on each path, so it is enough to understand $\eta_t$.  To achieve
    the former we can decompose $A$ into subsets.  From the proof of
    Theorem~\ref{thm:nullset} the set of points in $A$ which are equidistant
    from $b$ along paths through distinct boundary points lies on a hyperplane
    orthogonal to the edge between these boundary points.  This is a measure
    zero set so can be deleted without affecting $\eta_t$ (or $\eta'_t$).
    Repeating this for each pair of boundary points we are left with finitely
    many open subsets of $A$, each of which is then a countable union of cells.
    Each cell obtained in this manner has both of our assumed properties, so
    $\eta'_t$ can be written as a countable (and locally finite) sum of measures
    of the type $\eta_t$.
\end{remark}

\begin{lemma}\label{lem:Z_tbydist}
Under the above assumptions, for $t\in[0,1]$,
\begin{equation*}
	Z_{t,b}^{-1}\bigl(\hat\gamma(s)\bigr) = \Bigl\{a\in A: \frac{d(a,\dot a)}{d(\dot a,b)} = \frac{1-s}{1-t} -1\Bigr\},
	\end{equation*}
which is non-empty when $1-\frac{\diam(A)}{d(\dot a,b)}\leq \frac{1-s}{1-t}\leq 1$.  This is illustrated in Figure~\ref{fig:preimofpt}.
\end{lemma}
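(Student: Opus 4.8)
The plan is to reduce the statement to the elementary geometry of the single geodesic through $\dot a$, together with one fact about the function $a\mapsto d(a,\dot a)$ on $A$. By the standing assumptions of this section, every geodesic from $a\in A=\br{w}$ to $b$ runs first along a geodesic from $a$ to the entry point $\dot a$ and then along the unique common path $\hat\gamma$ from $\dot a$ to $b$; the first piece may fail to be unique, but that is irrelevant since only the portion lying on $\hat\gamma$ enters the argument. Since a subpath of a geodesic is a geodesic, $d(a,b)=d(a,\dot a)+d(\dot a,b)$, so in the constant-speed parametrization of $\hat\gamma_{a\to b}$ on $[0,1]$ the curve reaches $\dot a$ at time $t_1(a)=d(a,\dot a)/d(a,b)$.

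First I would establish the reparametrization identity. Parametrize the common path $\hat\gamma\colon[0,1]\to S_n$ at constant speed with $\hat\gamma(0)=\dot a$ and $\hat\gamma(1)=b$. For $t\geq t_1(a)$ the point $Z_{t,b}(a)=\hat\gamma_{a\to b}(t)$ lies on $\hat\gamma$, and matching the two constant-speed parametrizations on $[t_1(a),1]$ gives $Z_{t,b}(a)=\hat\gamma\bigl(\frac{t-t_1(a)}{1-t_1(a)}\bigr)$; for $t<t_1(a)$ the point lies strictly between $a$ and $\dot a$, hence off $\hat\gamma$. Setting $\hat\gamma(s)=Z_{t,b}(a)$, solving $s=\frac{t-t_1(a)}{1-t_1(a)}$ for $t_1(a)$, substituting $t_1(a)=\frac{d(a,\dot a)}{d(a,\dot a)+d(\dot a,b)}$, and clearing denominators, a short computation yields $\frac{d(a,\dot a)}{d(\dot a,b)}=\frac{1-s}{1-t}-1$. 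This is exactly the condition cutting out the claimed set, so $Z_{t,b}^{-1}(\hat\gamma(s))$ equals it.

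It remains to see when this level set is non-empty, and the only real input is that $a\mapsto d(a,\dot a)$ takes every value in $[0,\diam(A)]$ on $A$. This map is continuous on the connected set $A$ and vanishes at $\dot a\in A$; writing $\dot a=\br{w\bar i}$, Proposition~\ref{dist-bary} gives $d(\dot a,a)=[\dot a]_i-[a]_i$, whose maximum over $a\in\br{w}$ is the side length $2^{-|w|}=\diam(A)$, attained at the opposite boundary vertex $\br{w\bar j}$ (the segment joining two boundary vertices of $\br{w}$ being a geodesic by Lemma~\ref{lem:geostaysincell}). By the intermediate value theorem the range of $d(\cdot,\dot a)$ is exactly $[0,\diam(A)]$, so $Z_{t,b}^{-1}(\hat\gamma(s))\neq\emptyset$ precisely when $\frac{1-s}{1-t}-1$ lies in the interval $[0,\diam(A)/d(\dot a,b)]$; one also checks in passing that when this holds the solutions $a$ automatically satisfy $t\geq t_1(a)$, so $Z_{t,b}(a)$ really does sit on $\hat\gamma$.

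There is no substantive obstacle here: the surjectivity of $d(\cdot,\dot a)$ onto $[0,\diam(A)]$ is immediate, and the rest is bookkeeping. The one place that demands care is the reparametrization step—fixing the orientation of $\hat\gamma$ and the position of $\dot a$ along $\hat\gamma_{a\to b}$—because a slip there changes a sign in the identity and hence flips the interval of non-emptiness, so the endpoint bounds should be double-checked against this computation. Once the lemma is in hand, combining it with Proposition~\ref{interval-prop} recovers the interval $I_t$ with endpoints read off from the two extreme values of $d(\cdot,\dot a)$ over $A$.
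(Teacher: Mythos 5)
Your argument is correct and is essentially the paper's own proof: both reduce to the additivity relations $d(a,b)=d(a,\dot a)+d(\dot a,b)$ and $d(a,x)=d(a,\dot a)+d(\dot a,x)$ through the entry point, followed by solving a linear equation; your packaging via the entry time $t_1(a)=d(a,\dot a)/d(a,b)$ and matching constant-speed parametrizations is only a cosmetic difference, and your extra check that $d(\cdot,\dot a)$ attains every value in $[0,\diam(A)]$ fills in a step the paper merely asserts. One remark: your non-emptiness condition, namely $0\leq \frac{1-s}{1-t}-1\leq \frac{\diam(A)}{d(\dot a,b)}$, i.e.\ $1\leq\frac{1-s}{1-t}\leq 1+\frac{\diam(A)}{d(\dot a,b)}$ (equivalently $t-(1-t)\frac{\diam(A)}{d(\dot a,b)}\leq s\leq t$), is the correct one; the range printed in the lemma has the inequalities reversed, so your instinct to double-check the endpoints against the identity was well placed.
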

\begin{proof}
Recall that $\hat\gamma$ has constant speed parametrization, so $x=\hat\gamma(s)$ implies $d(\dot a,x)=sd(\dot a,b)$ and $d(x,b)=(1-s)d(\dot a,b)$.  From~\eqref{eq:Zt} the set $Z_{t,b}^{-1}(x)$ consists of those $a\in A$ so $x\in Z_t(a,b)$, which means $d(a,x)=td(a,b)$ and $d(x,b)=(1-t)d(a,b)$.  However, $\dot a$ is on the geodesic from $a$ to $b$ and the geodesic from $a$ to $x$, so we have both $d(a,b)=d(a,\dot a)+d(\dot a,b)$ and  $d(a,x)=d(a,\dot a)+d(\dot a,x)$. From this
\begin{equation*}
	d(a,\dot a)+ sd(\dot a, b) = d(a,\dot a)+ d(\dot a,x)=d(a,x)=td(a,b)  = td(a,\dot a)+td(\dot a, b)
	\end{equation*}
which may be rearranged to obtain
\begin{equation*}
	d(a,\dot a) = \Bigl( \frac{1-s}{1-t} - 1\Bigr) d(\dot a, b)
	\end{equation*}
and therefore the desired expression for $Z_{t,b}^{-1}(\hat\gamma(s))$.  The condition for the set to be non-empty is a consequence of there being points $a\in A$ with $0\leq d(a,\dot a)\leq \diam (A)$.
\end{proof}

\begin{figure}[ht]
  \centering
  \begin{tikzpicture}[x=0.9*\sqTHREE in,y=.9in]
	\draw[black] (0,0) -- (0,1) -- (1/2,1/2) -- cycle;
	\draw[black, thick] (1/2,1/2) -- (2,1/2);
    \draw[black, dashed] (1/8,1/8) -- (1/8,7/8);
    \draw (1/8,1/8) node[below right]{$Z_{t,b}^{-1}(x)$}; 
    \filldraw (1,1/2) circle (1.5pt) node[above]{$x$}; 
    \draw[draw=none] (0,0) -- (0,1) -- (1,1/2) -- cycle;
    \filldraw (1/2,1/2) circle (1.5pt) node[above]{$\dot a$};
    \filldraw (2,1/2) circle (1.5pt) node[above]{$b$};
    \draw (5/4,1/2) node[below]{$\hat\gamma$};
    \draw (0,1/2) node[left]{$A$};
  \end{tikzpicture}
  \caption{The preimage of $x$ under $Z_{t,b}$ consists of points equidistant from $\dot a$.}
  \label{fig:preimofpt}
\end{figure}
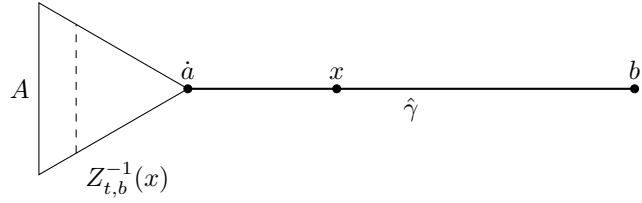  

From Proposition~\ref{dist-bary} the set of points in $A$ at a prescribed distance from $\dot a$ is a level set of the barycentric coordinate corresponding to $\dot a$, see Figure~\ref{fig:preimofpt}. We define an associated projection.

\begin{defn}\label{def:proj}
If $A=\br{w}$ and $\dot a=\br{w\bar i}$, let  $\varphi_{\dot a}(y)=[F_w^{-1}(y)]_i$, so $\varphi_{\dot a}:\br{w}\to[0,1]$ is the projection of $A$ on the scaled barycentric coordinate with $\varphi_{\dot a}(\dot a)=1$ and $\varphi_{\dot a}=0$ at the other boundary points of $A$. Note that $d(a,\dot a)=2^{-|w|}(1-\varphi_{\dot a}(a))$ for $a\in A$.
\end{defn}

In particular, the projection allows us to use the parametrization $H_t$ from Definition~\ref{def:Ht} to give a more convenient version of Lemma~\ref{lem:Z_tbydist} when $t\in[t_1^f,1]$.
\begin{lemma}\label{lem:ZtviaHt}
For $t\in[t_1^f,1]$, we have $Z_{t,b}^{-1}(s)=\varphi_{\dot a}^{-1}\circ H_t^{-1}$.
\end{lemma}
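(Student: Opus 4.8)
The plan is to compose three facts already available: Lemma~\ref{lem:Z_tbydist} identifies each fiber of $Z_{t,b}$ as a level set of $d(\cdot,\dot a)$; Definition~\ref{def:proj} rewrites such level sets through $\varphi_{\dot a}$; and Definition~\ref{def:Ht} describes the reparametrization $H_t$ of $\hat\gamma(I_t)$. Concretely, I would prove that $Z_{t,b}=H_t\circ\varphi_{\dot a}$ as maps $A\to\hat\gamma(I_t)$ and then read off the claimed identity of set-valued inverses by composing with $H_t^{-1}$ and $\varphi_{\dot a}^{-1}$, using that $f=g\circ h$ forces $f^{-1}=h^{-1}\circ g^{-1}$.

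First I would pin down $I_t$. Write $m=|w|$ and $D=d(\dot a,b)$, so $\diam(A)=2^{-m}$. By Lemma~\ref{lem:Z_tbydist} the point $\hat\gamma(s)$ lies in $Z_t(A,b)$ exactly when its fiber is nonempty, and translating that condition into a condition on $s$ shows this holds precisely for $s$ in the interval with endpoints $t$ and $t-(1-t)2^{-m}/D$. A one-line computation shows $t-(1-t)2^{-m}/D\ge 0$ if and only if $t\ge t_1^f=(1+2^m D)^{-1}$, so in the stated range $I_t=[\,t-(1-t)2^{-m}/D,\;t\,]\subseteq[0,1]$ and $Z_t(A,b)=\hat\gamma(I_t)$, in agreement with Proposition~\ref{interval-prop}; this is exactly where the hypothesis $t\ge t_1^f$ enters.

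Next I would unwind the two parametrizations. By Definition~\ref{def:Ht}, $H_t=\hat\gamma\circ\ell_t$ with $\ell_t\colon(0,1)\to I_t$ the increasing affine bijection, so $H_t(u)=\hat\gamma\bigl(t-(1-u)(1-t)2^{-m}/D\bigr)$; since $\hat\gamma$ is an injective geodesic, $H_t$ is a bijection of $(0,1)$ onto the interior of $\hat\gamma(I_t)$, extended to the endpoints as in Definition~\ref{def:Ht}. On the other side, fix $a\in A$ and apply Lemma~\ref{lem:Z_tbydist} to $Z_{t,b}(a)=\hat\gamma(s)$: the defining relation there is affine in $s$, and substituting $d(a,\dot a)=2^{-m}(1-\varphi_{\dot a}(a))$ from Definition~\ref{def:proj} turns it into $s=\ell_t(\varphi_{\dot a}(a))$, that is, $Z_{t,b}(a)=H_t(\varphi_{\dot a}(a))$. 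It suffices to match these two affine functions of $\varphi_{\dot a}(a)$ at the endpoint values: at $a=\dot a$, where $\varphi_{\dot a}=1$ and both sides equal $\hat\gamma(t)$, and at any $a$ on the face $\{\varphi_{\dot a}=0\}$, where both sides equal the other endpoint of $\hat\gamma(I_t)$. Taking preimages of a point of $Z_t(A,b)=\hat\gamma(I_t)$ on both sides of $Z_{t,b}=H_t\circ\varphi_{\dot a}$ then gives $Z_{t,b}^{-1}=\varphi_{\dot a}^{-1}\circ H_t^{-1}$.

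The argument is entirely elementary; the only thing that needs care is the bookkeeping among the three parameters involved — the arclength parameter $s$ on $\hat\gamma$, the parameter $u\in(0,1)$ carried by $H_t$, and the barycentric parameter $\varphi_{\dot a}(a)\in[0,1]$ on $A$ — and confirming that the affine map $\ell_t$ built into Definition~\ref{def:Ht} is precisely the one that makes them compatible; the harmless question of whether $0$ and $1$ are included is already addressed in Definition~\ref{def:Ht}.
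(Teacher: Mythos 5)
Your proposal is correct and follows essentially the same route as the paper's own (one-sentence) proof: the paper likewise observes that $Z_{t,b}$ and $H_t\circ\varphi_{\dot a}$ both map $A$ onto $Z_t(A,b)$, are constant on level sets of $\varphi_{\dot a}$, and are affine in the distance, hence coincide. You have simply made the affine reparametrizations and the endpoint matching explicit, and your identification of $I_t$ and of the role of the hypothesis $t\ge t_1^f$ agrees with Proposition~\ref{interval-prop} and the formula $t_1^f=(1+2^{|w|}d(\dot a,b))^{-1}$ given in the paper.
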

\begin{proof}
Since both $Z_{t,b}$ and $H_t\circ\varphi_{\dot a}$ map $A\to Z_t(A,b)$, are constant on level sets of $\varphi_{\dot a}$ and linear with respect to distance, they are equal.
\end{proof}

These considerations further suggest we consider a pushforward measure under the scaled barycentric projection.
\begin{defn}\label{def:nu}
Let  $\nu_n$ be the pushforward measure $\nu_n(X) = (\varphi_{\br{\bar 0}})_*\mu_n (X)= \mu_n\circ\varphi_{\br{\bar 0}}^{-1}(X)$ on Borel subsets of $[0,1]$.
\end{defn}
As $S_n$ is rotationally symmetric, we could have defined $\nu_n$ using  any boundary point map
$\varphi_{\br{\bar i}}$, and obtained the same measure.  Moreover, the fact that $\varphi_{\dot a}^{-1}
=F_w\circ\varphi_{\br{\bar i}}^{-1}$ implies that $\mu_n\circ \varphi_{\dot a}^{-1}=(n+1)^{-|w|}\nu_n$. 
 It is equally important that $\nu_n$ satisfies a simple self-similarity condition.

\begin{lemma}\label{lem:nunss}
If $\tilde F_i=\varphi\circ F_i\circ \varphi^{-1}$ then 
$\nu_n=\frac1{n+1}\nu_n\circ\tilde F_0^{-1} + \frac n{n+1} \nu_n\circ \tilde F_1^{-1}$. 
\end{lemma}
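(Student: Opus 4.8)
The plan is to turn the barycentric‑coordinate projection $\varphi=\varphi_{\br{\bar 0}}$ into an explicit one–dimensional iterated function system and then read off the self‑similarity of $\nu_n$ from Proposition~\ref{prop:general-pushforward}. Recall $\varphi$ sends $x$ to its $0$th barycentric coordinate $[x]_0$. The first step is the elementary computation: writing $x=\sum_k [x]_k q_k$, one has $F_i(x)=\sum_{k\neq i}\tfrac12[x]_k q_k+\bigl(\tfrac12[x]_i+\tfrac12\bigr)q_i$, and hence $[F_i(x)]_0=\tfrac12\bigl([x]_0+\delta_{i0}\bigr)$, where $\delta$ is the Kronecker symbol. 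In particular $\varphi\circ F_i$ depends on $x$ only through $\varphi(x)$, so $\tilde F_i=\varphi\circ F_i\circ\varphi^{-1}$ is a well-defined map $[0,1]\to[0,1]$, given explicitly by
\[ \tilde F_0(s)=\tfrac12(s+1),\qquad \tilde F_i(s)=\tfrac12 s\quad\text{for }1\le i\le n. \]
The key observations are that $\tilde F_1=\tilde F_2=\cdots=\tilde F_n$, all equal to $s\mapsto s/2$, and that $\varphi\circ F_i=\tilde F_i\circ\varphi$ by construction.

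With this in hand I would apply Proposition~\ref{prop:general-pushforward} to the attractor $S_n$, the IFS $\{F_i\}$, and the standard self-similar measure $\mu_n$ with all weights equal to $\tfrac1{n+1}$. Strictly, that proposition is phrased for a linear functional $x\mapsto\langle x,v\rangle$, while $\varphi_{\br{\bar 0}}$ is affine; but its proof uses only the self-similarity of $\mu_n$ together with the intertwining relation $\varphi\circ F_i=\tilde F_i\circ\varphi$ just verified, so it applies verbatim here (equivalently, $\varphi_{\br{\bar 0}}$ differs from a suitable unit-vector projection only by an affine homeomorphism of the target line, which does not affect the conclusion). It yields, for every Borel $X\subseteq[0,1]$,
\[ \nu_n(X)=\frac1{n+1}\sum_{i=0}^n \nu_n\bigl(\tilde F_i^{-1}(X)\bigr). \]
Finally, since $\tilde F_1,\dots,\tilde F_n$ coincide, the $n$ summands with $1\le i\le n$ are all equal to $\nu_n(\tilde F_1^{-1}(X))$, and grouping them gives $\nu_n(X)=\tfrac1{n+1}\nu_n(\tilde F_0^{-1}(X))+\tfrac{n}{n+1}\nu_n(\tilde F_1^{-1}(X))$, which is the assertion.

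I do not expect a genuine obstacle here: the whole content is the bookkeeping that turns $\varphi_{\br{\bar 0}}$ into the one-dimensional IFS $\{\tilde F_i\}$ and the observation that $\tilde F_1,\dots,\tilde F_n$ coincide, after which the lemma is just a regrouping of the $(n+1)$-term pushforward identity. The only minor care needed is in matching the hypotheses of Proposition~\ref{prop:general-pushforward}, i.e., passing from a linear functional to the affine projection $\varphi_{\br{\bar 0}}$, but as noted this changes nothing substantive.
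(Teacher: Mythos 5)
Your proof is correct and follows essentially the same route as the paper: both identify $\tilde F_0(s)=\tfrac12(s+1)$ and $\tilde F_j(s)=\tfrac12 s$ for $j\neq 0$ (the paper via Lemma~\ref{prop:general-IFS} with $\varphi(q_0)=1$, $\varphi(q_j)=0$; you by direct computation in barycentric coordinates), then invoke Proposition~\ref{prop:general-pushforward} with equal weights and group the $n$ coinciding maps. Your remark about matching the linear-functional hypothesis is a reasonable extra precaution but changes nothing of substance.
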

\begin{proof}
Recall that $\varphi(q_0)=1$ and $\varphi(q_j)=0$, for $j\ne 0$, while from Lemma~\ref{prop:general-IFS} we have $\tilde F_j(x)=\half(x+\varphi(q_j)$.  Thus
$\tilde F_0(x)=\half(x+1)$ and $\tilde F_j(x)=\half x$ if $j\neq 0$.  Proposition \ref{prop:general-pushforward} says that  $\nu_n$ is self-similar under the IFS $\{\tilde F_i\}$ with equal weights, and the result follows from the fact that $n$ of these maps are the same.
\end{proof}

See Figure \ref{fig:nu} for the approximate density of $\nu_2$, where the
weights are $\frac{1}{3}$ and $\frac{2}{3}$.

\begin{figure}[ht]
	\centering
	\includegraphics[width=5.0in]{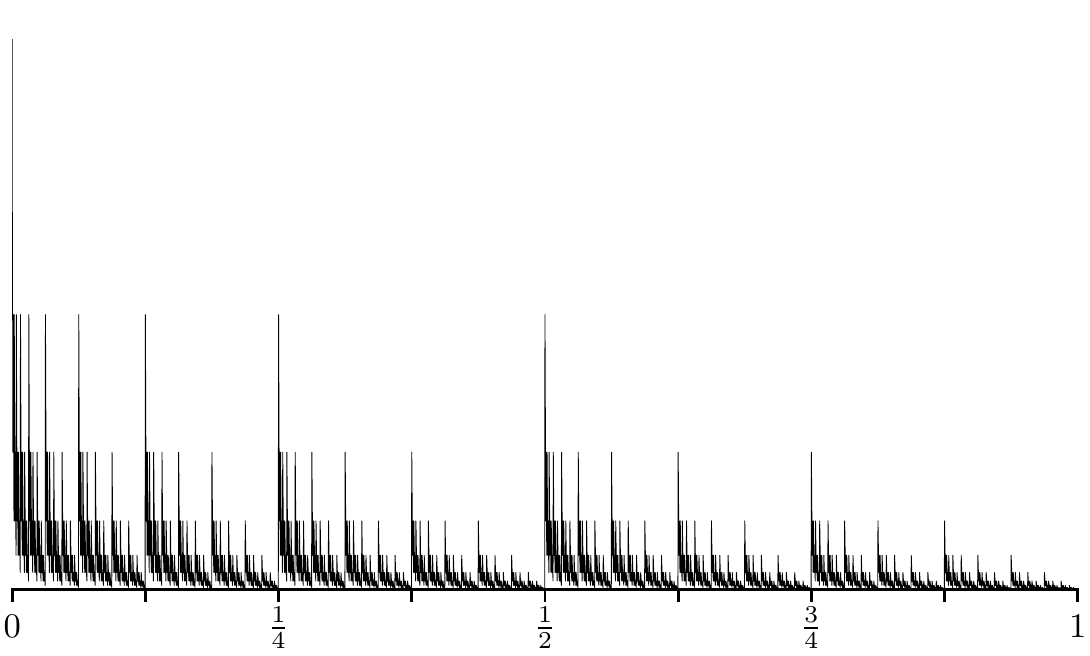}
    \caption{Approximate density of the self-similar measure $\nu_2$.}
    \label{fig:nu}
\end{figure}

Since the IFS $\{\tilde F_0,\tilde F_1\}$ satisfies the open set condition it is fairly elementary to compute the  Hausdorff dimension of $\nu_n$,   for example using the approach in Chapter~5.2 of~\cite{Edgar2}.  One expression for this dimension is $\inf\{\dim_{\text{Hausd}} (E):\nu_n(E)>0\}$.
\begin{prop}\label{prop:HdimZtb}
The Hausdorff dimension of  $\nu_n$ is $\frac{(n+1)\log(n+1)-n\log n}{(n+1)\log 2}$.  In particular it is singular with respect to Lebesgue measure on $[0,1]$.
\end{prop}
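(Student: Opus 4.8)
The plan is to recognise $\nu_n$ as a self-similar measure whose generating iterated function system satisfies the open set condition, and then to apply the standard formula for the Hausdorff dimension of such a measure (the argument of Chapter~5.2 of~\cite{Edgar2}, as already referenced above). By Lemma~\ref{lem:nunss}, $\nu_n$ is the self-similar measure on $[0,1]$ for the IFS $\{\tilde F_0,\tilde F_1\}$ with $\tilde F_0(x)=\tfrac12(x+1)$ and $\tilde F_1(x)=\tfrac12 x$, two similarities of ratio $r_0=r_1=\tfrac12$, carrying probability weights $p_0=\tfrac1{n+1}$ and $p_1=\tfrac n{n+1}$. First I would verify the open set condition with the open set $(0,1)$: indeed $\tilde F_0\bigl((0,1)\bigr)=(\tfrac12,1)$ and $\tilde F_1\bigl((0,1)\bigr)=(0,\tfrac12)$ are disjoint subsets of $(0,1)$, and the images of $[0,1]$ overlap only at the single point $\tfrac12$, which carries no $\nu_n$-mass because $\nu_n$ is non-atomic (established in the proof of Theorem~\ref{thm:nullset}).

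With these hypotheses in place the dimension is given by the ratio of entropy to Lyapunov exponent,
\[ \dim_H\nu_n=\frac{p_0\log p_0+p_1\log p_1}{p_0\log r_0+p_1\log r_1}=\frac{-\bigl(p_0\log p_0+p_1\log p_1\bigr)}{\log 2}, \]
and substituting $p_0=\tfrac1{n+1}$, $p_1=\tfrac n{n+1}$ yields $p_0\log p_0+p_1\log p_1=\tfrac{n\log n-(n+1)\log(n+1)}{n+1}$, hence $\dim_H\nu_n=\tfrac{(n+1)\log(n+1)-n\log n}{(n+1)\log 2}$, which is the claimed value. The reason the ratio formula computes $\inf\{\dim_H E:\nu_n(E)>0\}$ is the usual one: by the strong law of large numbers, $\nu_n$-almost every point $x$ has a coding over $\{0,1\}$ in which the two letters occur with asymptotic frequencies $p_0$ and $p_1$, so the level-$k$ cylinder around such an $x$ has diameter $2^{-k}$ and $\nu_n$-mass $\exp\bigl(k(p_0\log p_0+p_1\log p_1)+o(k)\bigr)$; since the open set condition (indeed strong separation away from the $\nu_n$-null point $\tfrac12$) makes these cylinders comparable to balls, the local dimension $\lim_{\rho\to0}\log\nu_n\bigl(B(x,\rho)\bigr)/\log\rho$ equals the displayed ratio for $\nu_n$-a.e.\ $x$, and Billingsley's lemma then identifies this common value with the dimension of the measure.

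For the singularity claim, note that the binary entropy satisfies $-(p_0\log p_0+p_1\log p_1)\le\log 2$ with equality only when $p_0=p_1=\tfrac12$; since $n\ge2$ forces $p_0=\tfrac1{n+1}\ne\tfrac12$, the entropy is strictly less than $\log 2$ and therefore $\dim_H\nu_n<1$. Consequently there is a Borel set of full $\nu_n$-measure (the set of $\nu_n$-typical points above) with Hausdorff dimension strictly less than $1$, hence with $\H^1$-measure, and so Lebesgue measure, equal to zero; thus $\nu_n$ lives on a Lebesgue-null set and is singular with respect to Lebesgue measure. (Equivalently, $\nu_n$-a.e.\ point has base-$2$ digit frequencies $(\tfrac1{n+1},\tfrac n{n+1})$, hence is not simply normal, whereas Lebesgue-a.e.\ point is.) Given the open set condition everything here is routine, so I do not expect a genuine obstacle; the points deserving a moment's care are precisely the ones flagged above — checking the open set condition and that the single overlap point is $\nu_n$-null, so that the cylinder-to-ball comparison underlying the dimension formula is legitimate, and verifying that the entropy is \emph{strictly} below $\log 2$, which is what forces $\nu_n$ to be genuinely singular rather than a rescaling of Lebesgue measure.
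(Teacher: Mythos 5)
Your argument is correct and follows exactly the route the paper indicates: the paper gives no proof of this proposition, only the remark that the IFS $\{\tilde F_0,\tilde F_1\}$ satisfies the open set condition so the dimension follows from the standard entropy-over-Lyapunov formula (Chapter~5.2 of~\cite{Edgar2}), and your computation with $p_0=\frac1{n+1}$, $p_1=\frac n{n+1}$, $r_0=r_1=\frac12$ fills in precisely those details, with the singularity deduced correctly from $\dim_H\nu_n<1$ for $n\geq2$. One small attribution slip: the non-atomicity established in the proof of Theorem~\ref{thm:nullset} concerns the edge projection $\pi_*\mu_n$ (a three-map IFS), not $\nu_n$; but the identical maximal-atom argument applies to the two-map self-similar identity of Lemma~\ref{lem:nunss}, and the paper itself asserts $\nu_n$ is non-atomic in Lemma~\ref{lem:fat-nu}, so this does not affect your proof.
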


With the pushforward measure $\nu_n$ in hand, we can give an elementary and concise description of the common path measure $\eta_t$ using Lemma~\ref{lem:ZtviaHt}; it is the main result of this section.

\begin{thm} \label{thm:eta-nu}
Let $A=\br{w}$ be a cell and $B=\{b\}$ with $b\not\in A$. If $t\in[t_1^f,1]$ then $\eta_t =  (n+1)^{-|w|} \nu_n\circ H_t^{-1}$, so is singular with respect to arc length and has dimension as in Proposition~\ref{prop:HdimZtb}.
\end{thm}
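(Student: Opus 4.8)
The proof is essentially bookkeeping with the results of this and the preceding section: I would read the identity $\eta_t=(n+1)^{-|w|}\,\nu_n\circ H_t^{-1}$ directly off the change of variables already packaged in Lemma~\ref{lem:ZtviaHt}, and then transport the singularity and dimension of $\nu_n$ across the reparametrization $H_t$ by a soft bi-Lipschitz argument.

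First I would fix a Borel set $X\subseteq\hat\gamma$ and unwind the definitions. By Definition~\ref{eta-def}, $\eta_t(X)=\mu_n\bigl(Z_{t,b}^{-1}(X)\bigr)$. For $t\in[t_1^f,1]$, Lemma~\ref{lem:ZtviaHt} gives $Z_{t,b}^{-1}=\varphi_{\dot a}^{-1}\circ H_t^{-1}$ as set-valued maps, so $\eta_t(X)=\mu_n\bigl(\varphi_{\dot a}^{-1}(H_t^{-1}(X))\bigr)=(\mu_n\circ\varphi_{\dot a}^{-1})\bigl(H_t^{-1}(X)\bigr)$. By the remark following Definition~\ref{def:nu} — which uses $\varphi_{\dot a}^{-1}=F_w\circ\varphi_{\br{\bar i}}^{-1}$, the rotational symmetry of $S_n$, and the scaling $\mu_n\circ F_w=(n+1)^{-|w|}\mu_n$ — one has $\mu_n\circ\varphi_{\dot a}^{-1}=(n+1)^{-|w|}\nu_n$. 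Substituting gives $\eta_t(X)=(n+1)^{-|w|}\,\nu_n\bigl(H_t^{-1}(X)\bigr)$, i.e., the claimed identity.

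For the remaining assertions, take $t\in[t_1^f,1)$; the boundary value $t=1$ is trivial, since then $Z_{1,b}^{-1}(\{b\})=A$ and both sides collapse to $(n+1)^{-|w|}\delta_b$. By Proposition~\ref{interval-prop}, $Z_t(A,b)=\hat\gamma(I_t)$ for an interval $I_t\subseteq[0,1]$, which is nondegenerate because $\varphi_{\dot a}$ takes every value in $[0,1]$ on the cell $A$ (Definition~\ref{def:proj}) and $I_t$ is the corresponding range of $s$ via Lemma~\ref{lem:Z_tbydist}. By Definition~\ref{def:Ht}, $H_t=\hat\gamma\circ\ell$ where $\ell:(0,1)\to I_t$ is an increasing affine bijection; $\hat\gamma$, being a geodesic, is injective, and by our constant-speed convention it traverses a finite union of Euclidean segments at constant nonzero speed $|\hat\gamma|$. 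Hence $H_t:(0,1)\to\hat\gamma(I_t)$ is a bi-Lipschitz bijection, multiplying the length of any subinterval by the fixed factor $|I_t|\,|\hat\gamma|$, so the pushforward of Lebesgue measure on $(0,1)$ under $H_t$ is a constant multiple of arc length (one-dimensional Hausdorff measure) on $\hat\gamma(I_t)$. Since Hausdorff dimension is a bi-Lipschitz invariant and bi-Lipschitz maps carry Lebesgue-null sets to arc-length-null sets and back, the properties asserted of $\nu_n$ in Proposition~\ref{prop:HdimZtb} — singularity with respect to Lebesgue measure on $[0,1]$ and Hausdorff dimension $\frac{(n+1)\log(n+1)-n\log n}{(n+1)\log 2}$ — transfer to $\nu_n\circ H_t^{-1}$, hence to the constant multiple $\eta_t$, with Lebesgue measure on $[0,1]$ replaced by arc length on $\hat\gamma$. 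The only point needing real care is confirming that $H_t$ is genuinely bi-Lipschitz rather than merely Lipschitz, which is exactly where injectivity of geodesics and the constant nonzero speed of $\hat\gamma$ enter; everything else is the standard invariance of Hausdorff dimension and of mutual singularity of measures under bi-Lipschitz maps, so there is no genuine obstacle.
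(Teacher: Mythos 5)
Your proof is correct and follows essentially the same route as the paper, whose entire argument is the chain $\eta_t=\mu_n\circ Z_{t,b}^{-1}=\mu_n\circ\varphi_{\dot a}^{-1}\circ H_t^{-1}=(n+1)^{-|w|}\nu_n\circ H_t^{-1}$ via Lemma~\ref{lem:ZtviaHt} and the identity $\mu_n\circ\varphi_{\dot a}^{-1}=(n+1)^{-|w|}\nu_n$. Your additional bi-Lipschitz argument for transferring singularity and dimension across $H_t$ is a sound elaboration of what the paper leaves implicit.
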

\begin{proof}
We have $ \eta_t= \mu_n\circ Z_{t,b} ^{-1} = \mu_n\circ \varphi_{\dot a}^{-1} \circ H_t^{-1} =  (n+1)^{-|w|} \nu_n \circ H_t^{-1}$.
\end{proof}

It should be remarked that we could have described $\eta_t$ for $t\in[0,1]$ rather than only $t\in[t_1^f,1]$ by using Lemma~\ref{lem:Z_tbydist} instead of  Lemma~\ref{lem:ZtviaHt}, but the notation is considerably less elementary and  the gain is minimal because in this case one can instead compute $\eta_t$ for the largest subcell $A'\subset A$ such that $t>t_1^f$ for $A'$.



\section{Interpolation of measures}
\label{sec:interp-general}

The general interpolation problem involves understanding $\{(a,b):Z_t(a,b)=x\}\subset A\times B$ and its product measure.  We slightly abuse notation by calling this measure $\eta_t$, as we did in the case $B=\{b\}$
\begin{defn}\label{def:etainhigherdim}
Let $A$ and $B$ be sets of nonzero $\mu_n$-measure, and suppose there is a regular common path
$\hat\gamma$ between them. Define a measure $\eta_t$ on $\hat\gamma$ to be the pushforward of $\mu_n\times\mu_n$ on $A\times B$, so that for each $t\in[0,1]$ and Borel set $X$,
\[	\eta_t(X) = (\mu_n\times\mu_n)\circ Z_t^{-1}(X).
\]
\end{defn}

As we did in the case of interpolation between a cell and a point, we take the viewpoint that interpolation between  sets $A$ and $B$ should be understood as a superposition of interpolation between pairs of cells.  This is by no means always possible, but it is possible for a large class of sets; for example, it is true  when $A$ and $B$ are both open.  Using the same considerations made when discussing point to set interpolation, we further note that from the proof of Theorem~\ref{thm:nullset} the product $A\times B$ may be decomposed into a $\mu_n\times\mu_n$-nullset, which is obtained as a finite union of sets of the type in~\eqref{eqn:nullseteqn}, and a countable union $A_j\times B_j$ in which $A_j$ and $B_j$ are disjoint cells joined by a unique common path.  Accordingly, we focus our investigation on $\eta_t$ when $A$ and $B$ are as in Definition~\ref{def:etainhigherdim}. 
 
We conclude with a discussion of interpolation when $\mu_n$ is replaced with an unequally distributed self-similar measure.

\subsection{Cell-to-cell interpolation} \label{subsec:cell-2-cell}

Let $A$ be a $k$-level cell and $B$ an $m$-level cell for which there is a common path $\hat\gamma$ which is the unique geodesic joining the boundary points $\dot a\in A$ and $\dot b\in B$.  From Lemma~\ref{lem:Z_tbydist} we know that $Z_t(a,b)=Z_t(a',b)$ if $\varphi_{\dot a}(a)=\varphi_{\dot a}(a')$ and similarly for the second coordinate using $\varphi_{\dot b}$, so it is natural to write $Z_t^{-1}(x)$ using these barycentric coordinates.  Note that they are scaled differently on $A$ and $B$, as in the following definition.

\begin{defn} \label{def:psi}
Define $\psi_t:[0,1]\times[0,1]\to[0,1]$ by
\[ 
  \psi_t(s,r) = \frac{2^{-k}(1-t)s+2^{-m}t(1-r)}{2^{-k}(1-t)+2^{-m}t}.
\]
\end{defn}

The following result is similar to Lemma~\ref{lem:ZtviaHt} and is illustrated in Figure \ref{fig:cell2cell}.  Recall from Definition~\ref{def:Ht} that $H_t$ parametrizes $Z_t(A,B)$ when the latter is contained in $\hat\gamma$.  

\begin{lemma} \label{commute-lemma}
For all $t\in[t^f_1,t^i_2]$  we have 
%
$	Z_t(a,b) = H_t \circ \psi_t(\varphi_{\dot{a}}(a),\varphi_{\dot{b}}(b)).
$
\end{lemma}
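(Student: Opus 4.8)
The plan is to verify the identity by computing both sides as functions on the geodesic $\hat\gamma$ and showing they agree. First I would recall that for $t\in[t_1^f,t_2^i]$, Proposition~\ref{interval-prop} guarantees $Z_t(A,B)=\hat\gamma(I_t)$ is contained in $\hat\gamma$, so $H_t$ is defined and the right-hand side makes sense. The key structural fact is Lemma~\ref{lem:Z_tbydist}: the point $Z_t(a,b)$ on $\hat\gamma$ depends on $a$ only through $d(a,\dot a)$ and on $b$ only through $d(b,\dot b)$, and by Definition~\ref{def:proj} these distances are $2^{-k}(1-\varphi_{\dot a}(a))$ and $2^{-m}(1-\varphi_{\dot b}(b))$ respectively. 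So it suffices to locate $Z_t(a,b)$ as a point of $\hat\gamma$ parametrized by arc length from $\dot a$, and check it matches $H_t\circ\psi_t$.

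The main computational step: fix $a,b$ and let $x=Z_t(a,b)$. Since $\dot a$ lies on the geodesic from $a$ to $b$ (the common path passes through it), $d(a,b)=d(a,\dot a)+d(\dot a,\dot b)+d(\dot b,b)$, and similarly $x$ lies on $\hat\gamma$ with $\dot a,\dot b$ on either side; the defining relations $d(a,x)=td(a,b)$, $d(x,b)=(1-t)d(a,b)$ combined with additivity of distance along the geodesic let me solve for the position of $x$ along $\hat\gamma$. Writing $\alpha=d(a,\dot a)=2^{-k}(1-s)$ with $s=\varphi_{\dot a}(a)$, $\beta=d(\dot b,b)=2^{-m}(1-r)$ with $r=\varphi_{\dot b}(b)$, and $\ell=d(\dot a,\dot b)=|\hat\gamma|$, I get $d(\dot a,x)=td(a,b)-\alpha=t(\alpha+\ell+\beta)-\alpha$, so the arc-length fraction along $\hat\gamma$ is $(d(\dot a,x))/\ell=(t(\alpha+\ell+\beta)-\alpha)/\ell$. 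A short rearrangement, using that $Z_t(A,B)$ is an interval $\hat\gamma(I_t)$ and that $H_t$ is by Definition~\ref{def:Ht} the affine reparametrization $(0,1)\to I_t$ followed by $\hat\gamma$, should identify the $H_t$-parameter of $x$ with exactly $\psi_t(s,r)=(2^{-k}(1-t)s+2^{-m}t(1-r))/(2^{-k}(1-t)+2^{-m}t)$; indeed plugging $\alpha=2^{-k}(1-s)$, $\beta=2^{-m}(1-r)$ into the fraction above and simplifying gives this form up to the affine normalization of $I_t$, which is precisely what $H_t$ absorbs.

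An alternative, slicker route I would actually prefer: observe that both $(a,b)\mapsto Z_t(a,b)$ and $(a,b)\mapsto H_t\circ\psi_t(\varphi_{\dot a}(a),\varphi_{\dot b}(b))$ map $A\times B$ into $\hat\gamma(I_t)$, are constant on the level sets of $(\varphi_{\dot a},\varphi_{\dot b})$, and depend on $(\varphi_{\dot a}(a),\varphi_{\dot b}(b))$ in a way that is affine (equivalently, linear with respect to arc length along $\hat\gamma$) — the first because of the distance relations solved above, the second by construction of $\psi_t$ and the linearity of $H_t$. Two such maps that agree at the corners $s,r\in\{0,1\}$ must coincide, so it is enough to check the four boundary cases, e.g. $s=r=1$ gives $Z_t(\dot a,\dot b)=\hat\gamma(t)$ and $\psi_t(1,1)=t$ composed appropriately, $s=1,r=0$ gives the cell-to-point situation already handled in Lemma~\ref{lem:ZtviaHt}, and so on. The mild obstacle is bookkeeping the affine normalization of $I_t$ versus $[0,1]$ correctly so that the denominator $2^{-k}(1-t)+2^{-m}t$ in $\psi_t$ matches the length of the relevant sub-geodesic; this is exactly the endpoint-matching computation and carries no real difficulty, but it is where sign and scaling errors would creep in, so I would do it carefully via the two extreme cases $\varphi_{\dot a}(a)=\varphi_{\dot b}(b)=1$ and $\varphi_{\dot a}(a)=\varphi_{\dot b}(b)=0$.
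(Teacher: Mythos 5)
Your main computation is essentially the paper's proof run in the opposite direction: the paper starts from $x=H_t\circ\psi_t(\varphi_{\dot a}(a),\varphi_{\dot b}(b))$ and verifies $d(a,x)=td(a,b)$, whereas you solve for the position of $Z_t(a,b)$ along $\hat\gamma$ and then normalize, but both arguments rest on the same ingredients --- additivity of distance through $\dot a$ and $\dot b$, the identity $d(a,\dot a)=2^{-k}(1-\varphi_{\dot a}(a))$, and the endpoint computation showing $d(x_1,x_2)=2^{-k}(1-t)+2^{-m}t$ is the denominator of $\psi_t$ --- so the proposal is correct. The only blemish is in your alternative sketch: $\psi_t(1,1)$ equals $2^{-k}(1-t)/(2^{-k}(1-t)+2^{-m}t)$, not $t$ (it is the $H_t$-parameter of $\hat\gamma(t)$, so the composed identity still holds), and the appeal to Lemma~\ref{lem:ZtviaHt} for the corner $s=1,r=0$ uses a differently normalized $H_t$, so the corner checks would need the same endpoint bookkeeping you already carry out in your primary route.
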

\begin{proof}
Recall that $Z_t(a,b)=x$ means $d(a,x)=td(a,b)$.  Suppose now that $x=H_t\circ\psi_t\circ(\varphi_{\dot{a}}(a),\varphi_{\dot{b}}(b))$.  We establish several points that together show $d(a,x)=td(a,b)$, proving the result.

Recall from Definition~\ref{def:proj} that $d(a,\dot a)=2^{-k}(1-\varphi_{\dot a}(a))$ and $d(\dot b,b)=2^{-m}(1-\varphi_{\dot b}(b))$.  Substituting into $\psi_t$ gives
\begin{equation}\label{eqn:psiteqn1}
	\psi_t \bigl(\varphi_{\dot{a}}(a),\varphi_{\dot{b}}(b)\bigr)
	= \frac{(1-t)(2^{-k}-d(a,\dot a))+td(\dot b,b)}{2^{-k}(1-t)+2^{-m}t}.
	\end{equation}

To proceed we need more information about $H_t$, the parametrization of $Z_t(A,B)$.  Using Lemma~\ref{lem:Z_tbydist} we find that the extreme points of $Z_t(A,B)$ are $x_1$ and $x_2$ satisfying $d(\bar a,x_1)=td(\bar a,\dot b)$ and $d(x_2,\bar b)=(1-t)d(\dot a,\bar b)$, where $\bar a\neq \dot a$ is a boundary point of $A$ and $\bar b\neq \dot b$ is a boundary point of $B$.  Since $d(\bar a,\dot a)=2^{-k}$ and $d(\dot b,\bar b)=2^{-m}$ this yields $d(\bar a,x_1)=t2^{-k} + td(\dot a,\dot b)$ and  $d(x_2,\bar b)=(1-t)2^{-m}+(1-t)d(\dot a,\dot b)$.  Moreover $t\in[t_1^f,t_2^i]$ implies that for any $a\in A$ and $b\in B$ the geodesic from $a$ to $b$ contains the following points in order: $a,\dot a,x_1,x,x_2,\dot b, b$.   We use this and the side lengths of the cells $A$ and $B$ to determine that
\begin{align*}
	d(x_1,x_2) & = d(\bar a,\bar b)- d(\bar a,x_1)-d(x_2,\bar b)\\
			& = 2^{-k}+2^{-m}+d(\dot a,\dot b) - td(\bar a, \dot b) - (1-t)d(\dot a,\bar b)\\
			&=  2^{-k}+2^{-m}+d(\dot a,\dot b) - t2^{-k} -td(\dot a,\dot b) - (1-t)2^{-m} - (1-t)d(\dot a,\dot b)\\
			&= 2^{-k}(1-t) + 2^{-m} t
	\end{align*}
which is the denominator in $\psi_t$.

Now $H_t$ is the linear parametrization of the path from $x_1$ to $x_2$, so $x=H_t(q)$ means $d(x_1,x)=qd(x_1,x_2)$.  Substituting $q=\psi_t \bigl(\varphi_{\dot{a}}(a),\varphi_{\dot{b}}(b)\bigr)$ from~\eqref{eqn:psiteqn1} we have
\begin{equation*}
	d(x_1,x)=  d(x_1,x_2)\psi_t \bigl(\varphi_{\dot{a}}(a),\varphi_{\dot{b}}(b)\bigr)= (1-t)(2^{-k}-d(a,\dot a))+td(\dot b,b).
	\end{equation*}
We can then compute $d(a,x)$ as follows, using $2^{-k}+d(\dot a,x_1)=d(\bar a,x_1)=t2^{-k}+td(\dot a,\dot b)$.
\begin{align*}
	d(a,x)&=d(a,\dot a)+d(\dot a,x_1)+d(x_1,x)\\
	& = d(a,\dot a)+d(\dot a,x_1)+(1-t)(2^{-k}-d(a,\dot a))+td(\dot b,b)\\
	&= td(\dot a,\dot b) +t d(a,\dot a) + td(\dot b,b)=td(a,b)
	\end{align*}
from which $x=Z_t(a,b)$ as required.
\end{proof}

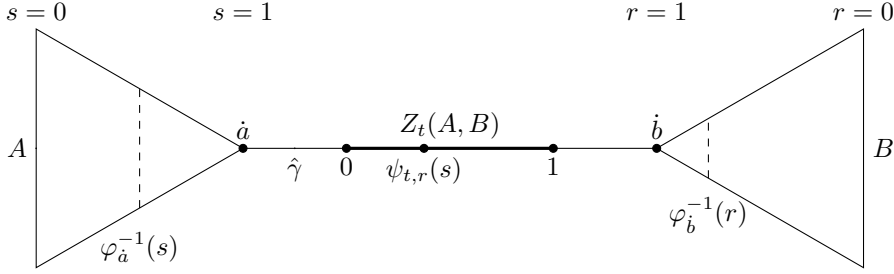
\begin{figure}[ht] 
  \centering
  \begin{tikzpicture}[x=0.625*\sqTHREE in,y=1.25in]
	\draw[black] (0,0) -- (0,1) -- (1,1/2) -- cycle;
	\draw[black] (1,1/2) -- (3,1/2);
    \draw[black] (3,1/2) -- (4,0) -- (4,1) -- cycle;
    \draw[black, dashed] (1/2,1/4) -- (1/2,3/4);
    \draw[black, dashed] (13/4,3/8) -- (13/4,5/8);
    \filldraw (0,1/2) circle (0.1pt) node[left]{$A$};
    \filldraw (0,1) node[above]{$s=0$};
    \filldraw (1,1) node[above]{$s=1$};
    \filldraw (3,1) node[above]{$r=1$};
    \filldraw (4,1) node[above]{$r=0$};
    \filldraw (4,1/2) node[right]{$B$};
    \filldraw (1,1/2) circle (1.5pt) node[above]{$\dot{a}$};
    \filldraw (3,1/2) circle (1.5pt) node[above]{$\dot{b}$};
    \filldraw (1/2,1/2) node[below, yshift=-28]{$\varphi_{\dot{a}}^{-1}(s)$};
    \filldraw (13/4,1/2) node[below, yshift=-15]{$\varphi_{\dot{b}}^{-1}(r)$};
    \draw (5/4,1/2) circle (0.1pt) node[below]{$\hat\gamma$};
    \draw[black, very thick] (3/2,1/2) -- (5/2,1/2);
    \filldraw (3/2,1/2) circle (1.5pt)node[below]{$0$};
    \filldraw (5/2,1/2) circle (1.5pt)node[below]{$1$};
    \draw (2,1/2) node[above]{$Z_t(A,B)$};
    \filldraw (15/8,1/2) circle (1.5pt) node[below]{$\psi_{t,r}(s)$};
  \end{tikzpicture}
  \caption{A schematic of cell-to-cell interpolation on a common path
  $\hat\gamma$.  The function $\psi_{t,r}$ (Definition \ref{def:psi}) describes at a given $t$ 
  where in the interval $Z_t(A,B)$ a point lying on the line $\varphi_{\dot{a}}^{-1}(s)$ is as 
  it is interpolated to a point lying on the line $\varphi_{\dot{b}}^{-1}(r)$.}
\label{fig:cell2cell}
\end{figure}



We can now prove an analogue of 
Theorem \ref{thm:eta-nu} for cell-to-cell interpolation.

\begin{thm} \label{thm:eta-conv}
If $A$ is a $k$-level cell and $B$ an $m$-level cell that are joined by a regular common path $\hat \gamma$ that is the unique geodesic between boundary points $\dot a\in A$ and $\dot b\in B$, then for all $t\in[t^f_1,t^i_2]$ 
\begin{equation*}
  \eta_t =  (n+1)^{-k-m} (\nu_n \times \nu_n)\circ \psi_t^{-1}\circ H^{-1}_t.
  \end{equation*}
\end{thm}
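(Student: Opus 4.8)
The plan is to string together identities that are already available. By Definition~\ref{def:etainhigherdim} we have $\eta_t(X)=(\mu_n\times\mu_n)(Z_t^{-1}(X))$ for Borel $X\subseteq\hat\gamma$, so everything hinges on understanding $Z_t^{-1}$. For $t\in[t_1^f,t_2^i]$, Lemma~\ref{commute-lemma} gives the pointwise factorization $Z_t(a,b)=H_t\circ\psi_t\bigl(\varphi_{\dot a}(a),\varphi_{\dot b}(b)\bigr)$, i.e. $Z_t = H_t\circ\psi_t\circ g$ on $A\times B$, where $g\colon A\times B\to[0,1]\times[0,1]$ is the product map $g(a,b)=(\varphi_{\dot a}(a),\varphi_{\dot b}(b))$. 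Taking preimages reverses the order of composition, so $Z_t^{-1}(X)=g^{-1}\bigl(\psi_t^{-1}(H_t^{-1}(X))\bigr)$ and hence $\eta_t = g_*(\mu_n\times\mu_n)\circ\psi_t^{-1}\circ H_t^{-1}$, where $g_*$ denotes pushforward. Thus the theorem reduces to identifying $g_*(\mu_n\times\mu_n)$ as a multiple of $\nu_n\times\nu_n$.

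First I would record the elementary fact that the pushforward of a product measure under a product map is the product of the pushforwards: since $g = \varphi_{\dot a}\times\varphi_{\dot b}$, one has $g_*(\mu_n\times\mu_n)=(\varphi_{\dot a})_*\mu_n\times(\varphi_{\dot b})_*\mu_n$, which is checked on measurable rectangles and extended by uniqueness of the product measure. Next I would invoke the observation made immediately after Definition~\ref{def:nu}: because $A=\br w$ with $|w|=k$ and $\dot a=\br{w\bar i}$ forces $\varphi_{\dot a}^{-1}=F_w\circ\varphi_{\br{\bar i}}^{-1}$, and $\mu_n$ is self-similar with equal weights so $\mu_n\circ F_w = (n+1)^{-k}\mu_n$, we get $(\varphi_{\dot a})_*\mu_n = \mu_n\circ\varphi_{\dot a}^{-1} = (n+1)^{-k}\nu_n$; identically, since $B$ is an $m$-level cell, $(\varphi_{\dot b})_*\mu_n = (n+1)^{-m}\nu_n$. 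Multiplying these gives $g_*(\mu_n\times\mu_n)=(n+1)^{-k-m}(\nu_n\times\nu_n)$, and substituting into the formula for $\eta_t$ from the previous paragraph yields $\eta_t=(n+1)^{-k-m}(\nu_n\times\nu_n)\circ\psi_t^{-1}\circ H_t^{-1}$, as claimed.

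I do not expect a genuine obstacle here; the substantive work was already done in Lemma~\ref{commute-lemma}, and what remains is measure-theoretic bookkeeping. The points needing a sentence of care are: that $\psi_t$ is continuous hence Borel measurable, and that $H_t$ (Definition~\ref{def:Ht}) is the composition of an affine surjection with the continuous injective path $\hat\gamma$, hence Borel and invertible onto $Z_t(A,B)$, so the preimages $\psi_t^{-1}$ and $H_t^{-1}$ of Borel sets are Borel; and that the hypothesis $t\in[t_1^f,t_2^i]$ is precisely what makes Lemma~\ref{commute-lemma} applicable and guarantees $Z_t(A,B)=\hat\gamma(I_t)\subseteq\hat\gamma$ is the interval parametrized by $H_t$ (for $t\notin[t_1^f,t_2^i]$ one would instead pass to a suitable subcell, as remarked after Theorem~\ref{thm:eta-nu}). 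Finally, as in Theorem~\ref{thm:eta-nu}, I would note that since $\psi_t$ and $H_t$ are affine reparametrizations, this representation exhibits $\eta_t$ as a pushforward of the singular measure $\nu_n\times\nu_n$, though quantifying its dimension is not part of the present statement.
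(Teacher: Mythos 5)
Your argument is correct and is essentially identical to the paper's proof: both apply Lemma~\ref{commute-lemma} to factor $Z_t^{-1}$, pull the product map $(\varphi_{\dot a},\varphi_{\dot b})$ into the product measure, and invoke the identity $\mu_n\circ\varphi_{\dot a}^{-1}=(n+1)^{-k}\nu_n$ (and its analogue for $B$) from the discussion after Definition~\ref{def:nu}. The extra measurability remarks you add are harmless but not needed beyond what the paper already assumes.
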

\begin{proof}
This is an immediate consequence of Lemma~\ref{commute-lemma} applied to the definition of $\eta_t$, because the functions $\varphi_{\dot a}^{-1}$ and $\varphi_{\dot b}^{-1}$ may be pulled into the product measure as follows:
\begin{equation*}
	\eta_t= (\mu_n\times\mu_n) \circ Z_t^{-1}= (\mu_n\times\mu_n)\circ ( \varphi_{\dot a},\varphi_{\dot b})^{-1}\circ \psi_t^{-1}\circ H_t^{-1}
	= (\mu_n\circ \varphi_{\dot a}^{-1} \times \mu_n\circ\varphi_{\dot b}^{-1}) \circ \psi_t^{-1}\circ H_t^{-1}
	\end{equation*}
so we can use $\mu_n\circ \varphi_{\dot a}^{-1}=(n+1)^{-k}\nu_n$ and similarly $\mu_n\circ\varphi_{\dot b}^{-1}=(n+1)^{-m}\nu_n$.
\end{proof}

Since it is a product of self-similar measures, the measure $\nu_n\times\nu_n$ is self-similar.  This is recorded in Proposition~\ref{prop:nuproductss} after defining notation for the two-dimensional IFS.  It is illustrated in Figure~\ref{fig:unit-square}.

\begin{defn} \label{def:g}
Let $q_{00}=(0,0), q_{01}=(0,1), q_{10}=(1,0)$, and $q_{11}=(1,1)$.  
For $i,j\in\{0,1\}$, define $G_{ij}:[0,1]^2 \to [0,1]^2$ by \[
  G_{ij}(x) = \frac{1}{2}(x + q_{ij}), 
\] and fix weights $w_{ij}=w_iw_j$, where $w_0=\nnpo$ and $w_1=\onpo$.
\end{defn}

The functions $G_{ij}$ are an IFS generating the unit square 
and are 
related to the functions $\tilde F_i$ by \[
  G_{ij}\mqty(x \\ y) = \mqty(\tilde F_{1-i}(x) \\ \tilde F_{1-j}(y)).
\]

\begin{figure}[ht]
\begin{tikzpicture}[x=1.5in, y=1.5in]
  \draw[black, thick] (0,0) rectangle (1,1);
  \draw (1/2,0) -- (1/2,1);
  \draw (0,1/2) -- (1,1/2);
  \draw (-0.169,1/2) node {\includegraphics[width=1.5in,height=0.5in,
    angle=90,origin=c]{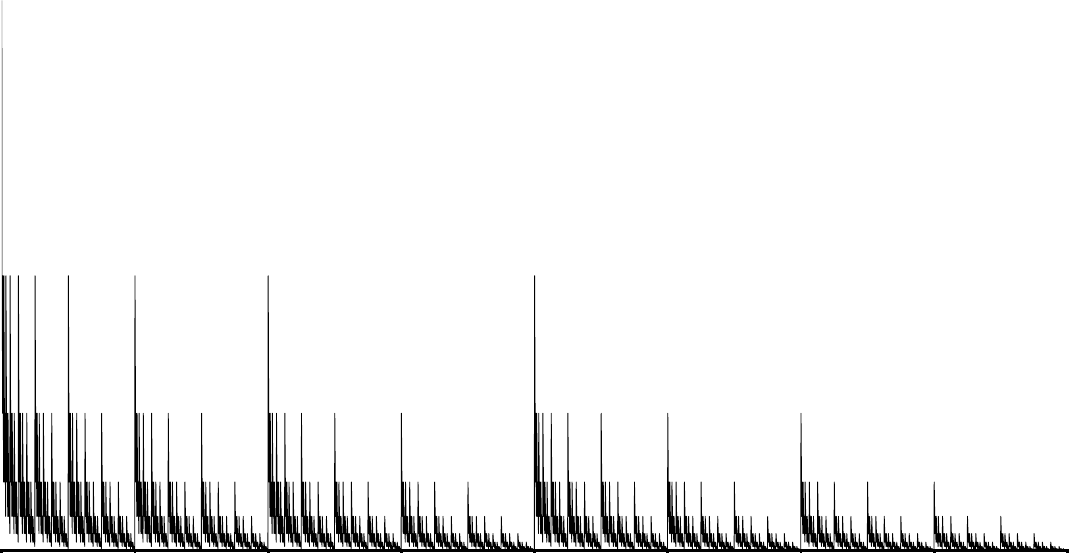}};
  \draw (1/2,-0.169) node {\reflectbox{\includegraphics[width=1.5in,height=0.5in,
  	angle=180,origin=c]{nu_crop}}};
  \draw (1/4,1/4) node{$\frac{n^2}{(n+1)^2}$};
  \draw (3/4,1/4) node{$\frac{n}{(n+1)^2}$};
  \draw (1/4,3/4) node{$\frac{n}{(n+1)^2}$};
  \draw (3/4,3/4) node{$\frac{1}{(n+1)^2}$};
\end{tikzpicture}
\caption{A diagram of the product measure $\nu_n\times\nu_n$.}
\label{fig:unit-square}
\end{figure}

\begin{prop}\label{prop:nuproductss} The measure $\nu_n\times \nu_n$ satisfies the self-similar relation
 \[
	\nu_n\times\nu_n = \sum_{i,j} w_{ij}(\nu_n\times\nu_n)\circ G_{ij}^{-1}.
\]
\end{prop}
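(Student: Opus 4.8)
The plan is to obtain the two-dimensional self-similar relation directly from the one-dimensional one in Lemma~\ref{lem:nunss} by forming a product. First I would record two elementary facts about measures: push-forward is linear in the measure, so the product of two finite sums of measures expands bilinearly; and push-forward commutes with products of maps, i.e. for measures $\lambda_1,\lambda_2$ and measurable maps $f_1,f_2$ one has $(\lambda_1\circ f_1^{-1})\times(\lambda_2\circ f_2^{-1})=(\lambda_1\times\lambda_2)\circ(f_1\times f_2)^{-1}$. Neither requires the open set condition or uniqueness of the self-similar measure; we are only verifying that $\nu_n\times\nu_n$ satisfies the stated identity.

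Next I would apply Lemma~\ref{lem:nunss} to each factor, writing $\nu_n=a_0\,\nu_n\circ\tilde F_0^{-1}+a_1\,\nu_n\circ\tilde F_1^{-1}$ with $a_0=\frac1{n+1}$ and $a_1=\frac n{n+1}$. Multiplying the two copies and using the two facts above gives
\[
  \nu_n\times\nu_n=\sum_{i,j\in\{0,1\}} a_i a_j\,(\nu_n\times\nu_n)\circ(\tilde F_i\times\tilde F_j)^{-1}.
\]
Then I would invoke the identity $G_{ij}(x,y)=(\tilde F_{1-i}(x),\tilde F_{1-j}(y))$ from Definition~\ref{def:g}, which says exactly that $\tilde F_i\times\tilde F_j=G_{(1-i)(1-j)}$, and reindex the sum by $k=1-i$, $l=1-j$. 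Since $a_1=\frac n{n+1}=w_0$ and $a_0=\frac1{n+1}=w_1$, we have $a_{1-k}=w_k$, hence $a_i a_j=a_{1-k}a_{1-l}=w_k w_l=w_{kl}$, and the sum becomes $\sum_{k,l} w_{kl}(\nu_n\times\nu_n)\circ G_{kl}^{-1}$, as desired.

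I do not expect a genuine obstacle here: the argument is bilinearity plus a change of variables. The only point needing care is the bookkeeping between the index convention of Definition~\ref{def:g} (which swaps the roles of $0$ and $1$ relative to Lemma~\ref{lem:nunss}) and the weights, and I would sanity-check the outcome against Figure~\ref{fig:unit-square} — for instance that $G_{00}$, corresponding to $\tilde F_1\times\tilde F_1$, indeed carries weight $\frac{n^2}{(n+1)^2}=w_{00}$. As an alternative one could bypass Lemma~\ref{lem:nunss} entirely and apply Proposition~\ref{prop:general-pushforward} to the two-dimensional IFS $\{G_{ij}\}$ viewed as a projection of the product IFS on $S_n\times S_n$, but routing through the one-dimensional relation is the shortest path.
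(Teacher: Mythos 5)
Your argument is correct and is essentially identical to the paper's own proof: both expand the product of the two one\mbox{-}dimensional self\mbox{-}similar relations bilinearly, use that push\mbox{-}forward commutes with products of maps, and identify $\tilde F_{1-i}\times\tilde F_{1-j}$ with $G_{ij}$ so that the weights match $w_{ij}=w_iw_j$. Your extra care with the index swap between Lemma~\ref{lem:nunss} and Definition~\ref{def:g} is exactly the bookkeeping the paper handles by writing $\nu_n=\sum_i w_i\,\nu_n\circ\tilde F_{1-i}^{-1}$ from the outset.
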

\begin{proof}
We compute from the self-similarity of $\nu_n$ that
\begin{align*}
  \nu_n \times \nu_n 
   &= \Bigl(\sum_i w_i  \nu_n\circ \tilde F_{1-i}^{-1}\Bigr)\Bigl(  \sum_j w_j \nu_n\circ \tilde F_{1-j}^{-1}\Bigr) \\
  &= \sum_{i,j}w_iw_j (\nu_n\times\nu_n)\circ (\tilde F_{1-i}^{-1}\times \tilde F_{1-j}^{-1})\\
  &= \sum_{i,j}w_{ij} (\nu_n \times \nu_n)\circ G_{ij}^{-1}.\qedhere
\end{align*}
\end{proof}

Theorem~\ref{thm:eta-conv} establishes that $\eta_t$ depends only on the linear parametrization $H_t$ of $Z_t(A,B)$ and the pushforward measure
\begin{equation}\label{tildenu}
	\tilde \nu_n^t =(\nu_n\times\nu_n)\circ\psi_t^{-1}.
	\end{equation}
This measure has a simple geometric meaning. Observe that $\psi_t$ is a scaled projection from the unit square to the unit interval along lines
of slope $2^{k-m}(\frac{t}{1-t})$.  The corresponding pushforward is then a generalization of a convolution; the usual convolution $\nu_n*\nu_n$ occurs when the lines have slope $-1$.  From Proposition~\ref{prop:general-pushforward} we also find that $\tilde \nu_n^t $ is self-similar.

\begin{thm}\label{thm:eta-general}
For $t\in[0,1]$ let $\tilde G_{ij}=\psi_t\circ G_{ij}\circ\psi_t^{-1}:[0,1]\to[0,1]$.  Then 
\[
 \tilde \nu_n^t = \sum_{i,j}w_{ij} \tilde \nu_n^t\circ \tilde G_{ij}^{-1}. \]
\end{thm}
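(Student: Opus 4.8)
The plan is to derive the self-similar relation for $\tilde\nu_n^t$ from the self-similarity of $\nu_n\times\nu_n$ established in Proposition~\ref{prop:nuproductss}, pushing it forward through $\psi_t$ in exactly the manner of Proposition~\ref{prop:general-pushforward}. The only substantive point is to check that $\psi_t$ conjugates the planar IFS $\{G_{ij}\}$ to the interval maps $\tilde G_{ij}$; granting that, the measure identity is a formal computation.

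First I would establish the conjugation identity. Writing $\alpha=2^{-k}(1-t)$, $\beta=2^{-m}t$ and $D=\alpha+\beta$, so that $\psi_t(s,r)=D^{-1}\bigl(\alpha s+\beta(1-r)\bigr)$ is a convex combination of $s$ and $1-r$ and hence a surjection of $[0,1]^2$ onto $[0,1]$, I substitute $G_{ij}(s,r)=\tfrac12\bigl((s,r)+q_{ij}\bigr)=\bigl(\tfrac{s+i}{2},\tfrac{r+j}{2}\bigr)$ and rearrange to get
\[
  \psi_t\bigl(G_{ij}(s,r)\bigr)=\tfrac12\Bigl(\psi_t(s,r)+\psi_t(q_{ij})\Bigr).
\]
Since the right side depends on $(s,r)$ only through $u:=\psi_t(s,r)$, the composition $\tilde G_{ij}=\psi_t\circ G_{ij}\circ\psi_t^{-1}$ is a well-defined affine contraction of $[0,1]$, namely $\tilde G_{ij}(u)=\tfrac12\bigl(u+\psi_t(q_{ij})\bigr)$, and the displayed identity is precisely $\tilde G_{ij}\circ\psi_t=\psi_t\circ G_{ij}$. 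This plays the role, for the affine surjection $\psi_t$, of Lemma~\ref{prop:general-IFS}.

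Given this, $\tilde G_{ij}\circ\psi_t=\psi_t\circ G_{ij}$ yields the identity of preimages $G_{ij}^{-1}\circ\psi_t^{-1}=\psi_t^{-1}\circ\tilde G_{ij}^{-1}$, and then for a Borel set $X\subseteq[0,1]$, using \eqref{tildenu} and Proposition~\ref{prop:nuproductss},
\begin{align*}
  \tilde\nu_n^t(X)
  &=(\nu_n\times\nu_n)\bigl(\psi_t^{-1}(X)\bigr)
   =\sum_{i,j}w_{ij}\,(\nu_n\times\nu_n)\bigl(G_{ij}^{-1}\bigl(\psi_t^{-1}(X)\bigr)\bigr)\\
  &=\sum_{i,j}w_{ij}\,(\nu_n\times\nu_n)\bigl(\psi_t^{-1}\bigl(\tilde G_{ij}^{-1}(X)\bigr)\bigr)
   =\sum_{i,j}w_{ij}\,\tilde\nu_n^t\bigl(\tilde G_{ij}^{-1}(X)\bigr),
\end{align*}
which is the asserted relation. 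The main, and essentially only, obstacle is the first step: verifying that the affine map $\psi_t$, which is not of the unit-vector form treated in Lemma~\ref{prop:general-IFS}, nonetheless conjugates each $G_{ij}$ to a contraction of ratio $\tfrac12$, equivalently that $\psi_t\circ G_{ij}$ is constant on the fibres of $\psi_t$. This is exactly the content of the displayed conjugation identity, and once it is in place the argument is the same formal pushforward computation that proves Proposition~\ref{prop:general-pushforward}; the degenerate cases $t=0,1$, where $\psi_t$ reduces to a coordinate projection, are covered by the same computation and merely recover the self-similarity of $\nu_n$.
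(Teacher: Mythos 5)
Your proof is correct and follows the same route the paper intends: the paper obtains this theorem by appeal to Proposition~\ref{prop:general-pushforward} applied to the IFS $\{G_{ij}\}$ with weights $w_{ij}$, and your argument is exactly the unpacking of that appeal --- first verifying that $\psi_t$ conjugates each $G_{ij}$ to the affine contraction $\tilde G_{ij}(u)=\tfrac12\bigl(u+\psi_t(q_{ij})\bigr)$ (the analogue of Lemma~\ref{prop:general-IFS} for the affine surjection $\psi_t$), and then running the formal pushforward computation against Proposition~\ref{prop:nuproductss}. The conjugation identity you check is the one genuinely nontrivial point, since $\psi_t$ is not injective, and your verification of it is correct.
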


The maps in the IFS $\{\tilde G_{ij}\}$ take $[0,1]$ to overlapping segments in $[0,1]$, with overlaps that depend on $t$.  Figure \ref{fig:g} shows these overlapping segments, along with their corresponding weights, for one choice of $t$, and Figure \ref{fig:nu-star-nu} shows the approximate densities of $\tilde\nu_n^t$ for 
several $t$ values.

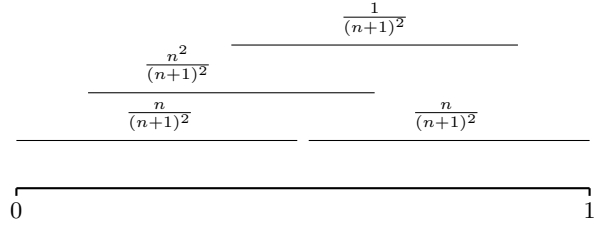
\begin{figure}[ht]\label{self-sim-fig}
\begin{tikzpicture}[x=3in,y=2in]
  \draw[black, thick] (0,0) -- (1,0);
  \draw (0,0) node[below, yshift=-2]{\small $0$};
  \draw (1,0) node[below, yshift=-2]{\small $1$};
  \draw[black] (0,1/8) -- (0.49, 1/8);
  \draw (1/4,1/8) node[above]{\small $\frac{n}{(n+1)^2}$};
  \draw[black] (0.51,1/8) -- (1, 1/8);
  \draw (3/4,1/8) node[above]{\small $\frac{n}{(n+1)^2}$};
  \draw[black] (1/8,1/4) -- (5/8,1/4);
  \draw (9/32,1/4) node[above]{\small $\frac{n^2}{(n+1)^2}$};
  \draw[black] (3/8,3/8) -- (7/8,3/8);
  \draw (5/8,3/8) node[above]{\small $\frac{1}{(n+1)^2}$};
  \draw [black, thick] (0,0) -- (0,-0.02);
  \draw [black, thick] (1,0) -- (1,-0.02);
\end{tikzpicture}
\caption{Distribution of self-similar weights of $\tilde\nu_n^t$ for some $t$.} 
\label{fig:g}
\end{figure}

\begin{figure}[ht]\label{conv-densities}
	\centering
	\begin{subfigure}[b]{0.45\textwidth}
        \centering
		\includegraphics[width=2in]{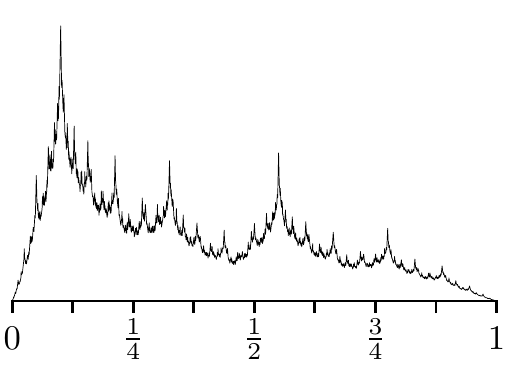}	
        \caption{$t=0.10$}
	\end{subfigure}
	\begin{subfigure}[b]{0.45\textwidth}
        \centering
		\includegraphics[width=2in]{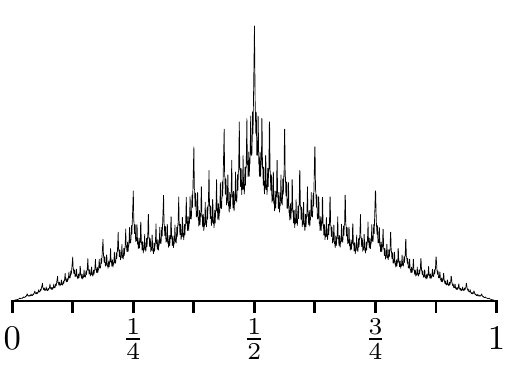}	
        \caption{$t=0.50$}
	\end{subfigure}
	\begin{subfigure}[b]{0.45\textwidth}
        \centering
		\includegraphics[width=2in]{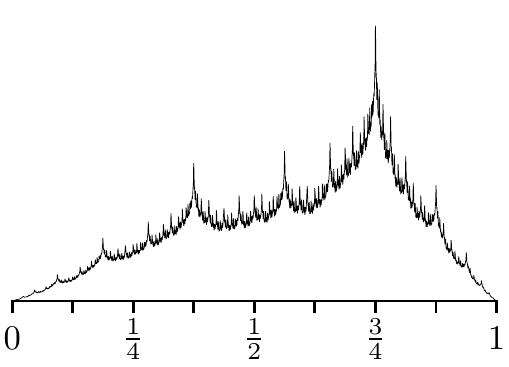}	
        \caption{$t=0.75$}
	\end{subfigure}
	\begin{subfigure}[b]{0.45\textwidth}
        \centering
		\includegraphics[width=2in]{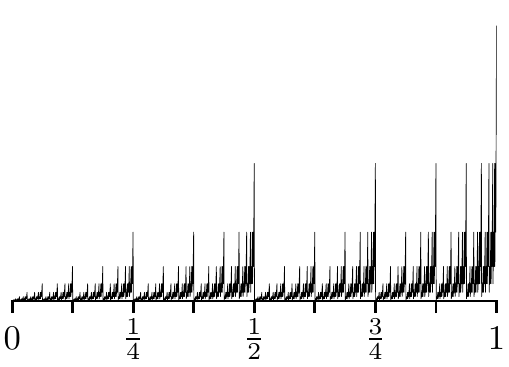}	
        \caption{$t=1.00$}
	\end{subfigure}
    \caption{Approximate densities of the convolution measure $\tilde\nu_n^t$ at 
    various values of $t$.}
    \label{fig:nu-star-nu}
\end{figure}

It is generally difficult to compute the dimensions of measures from overlapping IFS, but we may deduce some results from the Marstrand projection theorem~\cite{marstrand1954}.  First note that the dimension of $\nu_n\times\nu_n$ is twice that of $\nu_n$, and is given by the formula
\begin{equation} \label{lower-hausdorff-gen}
 2\frac{(n+1)\log(n+1)-n\log n}{(n+1)\log 2}.
 \end{equation}
This expression is decreasing with limit zero as $n$ increases. In particular, it is less that $1$ for $n\geq9$ and greater than $1$ for $2\leq n\leq 8$, from which we deduce the following using Theorems~6.1 and~6.3 in~\cite{hu1994fractal}.
\begin{thm}\label{thm:celltocellabscty}
If $2\leq n\leq 8$ then for almost all $t\in[0,1]$ the measure $\tilde\nu_n^t$ is absolutely continuous with respect to Lebesgue measure on $[0,1]$.  For $n\geq9$, it is  singular with respect to Lebesgue measure, and in fact has lower Hausdorff dimension given by~\eqref{lower-hausdorff-gen}.
\end{thm}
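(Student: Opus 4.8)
The plan is to realize each $\tilde\nu_n^t$ as an orthogonal projection (up to an affine change of variable on the line) of the planar self-similar measure $\nu_n\times\nu_n$, and then to read off its measure class and dimension from the Marstrand projection theorem, in the form of Theorems~6.1 and~6.3 of~\cite{hu1994fractal}. By Definition~\ref{def:psi} the map $\psi_t$ is affine, $\psi_t(s,r)=\frac{a}{a+b}\,s-\frac{b}{a+b}\,r+\frac{b}{a+b}$ with $a=2^{-k}(1-t)$ and $b=2^{-m}t$, and its linear part is nonzero for every $t\in(0,1)$; hence $\psi_t$ agrees, after composing with an affine bijection of $\mathbb R$, with an orthogonal projection $\pi_{\theta(t)}$ of $[0,1]^2$ onto a line, the fibres of the projection being the level sets of $\psi_t$, which are lines of slope $2^{m-k}(1-t)/t$. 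Because an affine bijection of $\mathbb R$ preserves Hausdorff dimension and the class of Lebesgue-null sets, $\tilde\nu_n^t=(\nu_n\times\nu_n)\circ\psi_t^{-1}$ is absolutely continuous with respect to $\mathcal L^1$ (respectively singular with respect to $\mathcal L^1$, respectively of lower Hausdorff dimension $\alpha$) if and only if $(\pi_{\theta(t)})_*(\nu_n\times\nu_n)$ is. Moreover $t\mapsto 2^{m-k}(1-t)/t$ is a strictly decreasing real-analytic bijection of $(0,1)$ onto $(0,\infty)$, so $t\mapsto\theta(t)$ is a diffeomorphism of $(0,1)$ onto an open arc of directions; in particular it maps Lebesgue-null subsets of $(0,1)$ to Lebesgue-null sets of directions and conversely, so any statement holding for Lebesgue-a.e.\ direction holds for Lebesgue-a.e.\ $t\in[0,1]$, the two excluded endpoints forming a null set.

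Next I would compute the dimension of the projected measure. Since $\nu_n$ is the self-similar measure of the IFS $\{\tilde F_0,\tilde F_1\}$ (Lemma~\ref{lem:nunss}), which satisfies the open set condition, $\nu_n$ is exact-dimensional with the value of $\dim_H\nu_n$ from Proposition~\ref{prop:HdimZtb}; hence $\nu_n\times\nu_n$ is exact-dimensional with $\dim_H(\nu_n\times\nu_n)=2\dim_H\nu_n$, which is exactly the quantity~\eqref{lower-hausdorff-gen}. This quantity equals $\frac{2}{\log 2}$ times the entropy $\frac1{n+1}\log(n+1)+\frac{n}{n+1}\log\frac{n+1}{n}$, which is strictly decreasing in $n$ with limit $0$, and evaluating at $n=8$ and at $n=9$ shows that it exceeds $1$ precisely when $2\le n\le8$ and is strictly less than $1$ when $n\ge9$, as asserted in the discussion preceding the theorem.

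Finally I would apply the projection theorem in each regime. When $n\ge9$ we have $\dim_H(\nu_n\times\nu_n)<1$, and the Marstrand projection theorem for exact-dimensional measures (Theorem~6.3 of~\cite{hu1994fractal}) shows that $(\pi_\theta)_*(\nu_n\times\nu_n)$ has, for Lebesgue-a.e.\ $\theta$, lower Hausdorff dimension equal to~\eqref{lower-hausdorff-gen}; being of dimension strictly below $1$ it is carried by a Borel set of zero $\mathcal L^1$-measure, hence singular. When $2\le n\le8$ we have $\dim_H(\nu_n\times\nu_n)>1$, and Theorem~6.1 of~\cite{hu1994fractal} gives $(\pi_\theta)_*(\nu_n\times\nu_n)\ll\mathcal L^1$ for Lebesgue-a.e.\ $\theta$. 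Transporting both conclusions back through the diffeomorphism $t\mapsto\theta(t)$ of the first paragraph completes the proof. I expect the delicate point to be the use of Theorem~6.1: for a general planar measure, absolute continuity of almost every projection is controlled not by $\dim_H$ but by the smaller correlation ($L^2$-energy) dimension, which here is strictly less than $\dim_H(\nu_n\times\nu_n)$, so the argument must genuinely exploit the self-similar product structure of $\nu_n\times\nu_n$ — equivalently, the self-similarity of each $\tilde\nu_n^t$ under the ratio-$\frac12$ system $\{\tilde G_{ij}\}$ with weights $w_{ij}$ of Theorem~\ref{thm:eta-general}, whose similarity dimension is again the quantity~\eqref{lower-hausdorff-gen}. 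That is precisely the structured setting in which the cited theorems yield absolute continuity.
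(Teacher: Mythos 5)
Your proposal is correct and follows essentially the same route as the paper: the paper likewise observes that $\psi_t$ is (up to an affine change of variable) the orthogonal projection of $\nu_n\times\nu_n$ along lines whose slope sweeps out $(0,\infty)$ as $t$ runs over $(0,1)$, that $\dim(\nu_n\times\nu_n)=2\dim\nu_n$ crosses $1$ between $n=8$ and $n=9$, and then cites Theorems~6.1 and~6.3 of~\cite{hu1994fractal}; you simply supply more of the bookkeeping (the diffeomorphism $t\mapsto\theta(t)$ carrying null sets to null sets, exact-dimensionality of the product). Your closing worry about the correlation dimension is unfounded for the absolute-continuity claim: a measure whose lower Hausdorff dimension exceeds $1$ decomposes, off a null set, into countably many pieces satisfying a Frostman condition of exponent greater than $1$, each of which has finite $t$-energy for some $t>1$, so the potential-theoretic projection theorem applies without invoking the self-similar structure (the correlation dimension would only be needed to put the projected density in $L^2$).
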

We note that recent results of Shmerkin and Solomyak~\cite{shmerkin2016} show the set of exceptional $t$ in this theorem is not just zero measure but zero Hausdorff dimension.

\subsection{Alternate weightings of self-similar measures on the gasket} \label{subsec:alt-weight-meas}

We can generalize our previous results to self-similar measures other than the standard measure on $S_n$. 
Consider a self-similar measure $\mu_n'$ on $S_n$ given by weights $\{\mu_n^i\}_{i=0}^n$.  In this
case, the pushforward measure $\nu_n' = \varphi_*\mu_n'$ is self-similar, but has weights dependent on 
the reference point of the projection $\varphi$.  In particular, if we consider the projection with 
respect to a vertex $\br{w\bar i}$, the self-similarity relation is given by: \[
    \nu_n'(X) = \mu_n^i\nu_n'\circ \tilde F_1^{-1}(X) 
    + \qty(\sum_{j\ne i}\mu_n^j)\nu_n'\circ \tilde F_0^{-1}(X).
\] This follows from Lemma~\ref{prop:general-IFS} and Proposition~\ref{prop:general-pushforward}. 
Self-similarity also carries over to $\tilde\nu_n^t$, but
the self-similarity weights depend on orientations of both the starting and ending cells with respect
to the common path; if $\br{w\bar i}$ is the entry point and $\br{v\bar j}$ the exit point of a 
common path, then $\tilde\nu_n^t$ has self-similarity relations as in Theorem~\ref{thm:eta-general}, but with $w_{00}=\mu_n^i\mu_n^j$, $w_{01}=\mu_n^i(\sum_{k\ne j}\mu_n^k)$, $w_{10}
=(\sum_{k\ne i}\mu_n^k)\mu_n^j$, and $w_{11}=(\sum_{k\ne i}\mu_n^k)(\sum_{k\ne j}\mu_n^k)$.


\section{An Interpolation Inequality} \label{sec:inequality}
The model for an inequality involving the interpolant set $Z_t$ is the classical Brunn--Minkowski inequality, which says that for sets in $\mathbb{R}^n$ the Euclidean volume $|\cdot|$ satisfies $\bigl|Z_t(A,B)|^{1/n}\ge (1-t)|A|^{1/n}+t|B|^{1/n}$.  We have already noted that this inequality cannot be valid for $\mu_n$ because $Z_t(A,B)$ for $t\in (0,1)$ has Hausdorff dimension at most $1$, and thus $\mu_n$-measure zero.  When seeking alternative inequalities it is not entirely clear which measures to use: $\mu_n$ is natural for $A$ and $B$, and is equivalent to $\nu_n$ for the barycentric projection of these sets, at least under the conditions considered in the previous sections, but $\nu_n$ is not natural for $Z_t(A,B)$ because it is defined on $[0,1]$, not on the common path.   Since $\cup_{t\in(0,1)}Z_t$ is at most one-dimensional (from Proposition~\ref{prop:onedim}), the geometrically defined natural measures to consider would seem to be Hausdorff measures of dimension at most one. In light of the work done in the previous sections, natural choices of dimension are that of $\nu_n$, that of $\nu_n\times\nu_n$, and $1$. The first is likely to be uninteresting, because if $\nu_n(A)\nu_n(B)>0$ then $Z_t(A,B)$ has dimension at least $\min\{1,2\dim(\nu_n)\}$  as seen in Theorem~\ref{thm:celltocellabscty}. But the others also present some issues with optimality, validity or both: for example, if $A$ and $B$ are connected then $Z_t$ contains an interval and therefore has infinite measure in dimensions less than one, so any inequality for dimension less than one will be trivially true on connected sets, while Theorem~\ref{thm:celltocellabscty} ensures that for $n\geq9$ one could have $\nu_n(A)=\nu_n(B)=1$ and yet $Z_t(A,B)$ has dimension less than one, so no inequality for Hausdorff $1$-measure can be true for general $A$ and $B$ in this case.

In this section we derive an inequality in the case where $A$ and $B$ are connected sets, so $Z_t(A,B)$ contains an interval and therefore the correct measure to use for $Z_t$ is the one-dimensional Hausdorff measure $\mathcal{H}^1$.  There is an easy bound if $A$ and $B$ are cells.

\begin{prop}\label{prop:ineqcellcase}
Suppose $A=\br{v}$ and $B=\br{w}$ are disjoint cells. Then we have the sharp inequality
\begin{equation*}
	\mathcal{H}^1(Z_t(A,B))\geq  (1-t) \mu_n(A)^{\log 2/\log(n+1)} + t \mu_n(B)^{\log 2/\log(n+1)}.
	\end{equation*}
\end{prop}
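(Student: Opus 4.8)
The plan is to reduce the right‑hand side to a sum of edge lengths and then to exhibit a single geodesic arc inside $Z_t(A,B)$ whose $\mathcal{H}^1$-measure equals that sum. After rescaling we may assume the common cell of $A$ and $B$ is $S_n$, so $A=\br v$ is a $k$-level cell and $B=\br w$ an $m$-level cell; then $\mu_n(A)=(n+1)^{-k}$, $\mu_n(B)=(n+1)^{-m}$, and since $(n+1)^{\log 2/\log(n+1)}=2$ we get $\mu_n(A)^{\log 2/\log(n+1)}=2^{-k}=\diam A$ and $\mu_n(B)^{\log 2/\log(n+1)}=2^{-m}=\diam B$ (recall $\diam S_n=1$ from the proof of Lemma~\ref{lem:1or2bridge}). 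So it suffices to prove $\mathcal{H}^1(Z_t(A,B))\ge(1-t)2^{-k}+t2^{-m}$.

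Write $A\subseteq\br i$ and $B\subseteq\br j$ with $i\ne j$, and put $\bar a=\br{v\bar i}$, $\bar b=\br{w\bar j}$. By the classification of geodesics between distinct maximal cells (Lemma~\ref{lem:1or2bridge}, Definition~\ref{P1-P2}), $d(\bar a,\bar b)$ is realized either by a $P_1$ geodesic through the bridge point $\br{i\bar j}$, or by a $P_2$ geodesic through bridge points $\br{i\bar l}$, $\br{j\bar l}$ with $l\ne i,j$. In the first case set $\dot a=\br{v\bar j}$, $\dot b=\br{w\bar i}$; in the second, $\dot a=\br{v\bar l}$, $\dot b=\br{w\bar l}$. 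These corners are chosen so that $\dot a$ differs from $\bar a$, and $\dot b$ from $\bar b$, in exactly one barycentric coordinate, by $2^{-k}$ and $2^{-m}$ respectively; comparing the barycentric length formula of Proposition~\ref{dist-bary} for the pairs $(\bar a,\bar b)$ and $(\dot a,\dot b)$ then shows both that the same path type ($P_1$ or $P_2$) is strictly shortest for $(\dot a,\dot b)$ and that
\[
d(\bar a,\dot a)+d(\dot a,\dot b)+d(\dot b,\bar b)=2^{-k}+d(\dot a,\dot b)+2^{-m}=d(\bar a,\bar b).
\]
Since $[\bar a,\dot a]$ and $[\dot b,\bar b]$ are the geodesic edges of $A$ and of $B$ joining those corners (Lemma~\ref{lem:geostaysincell}), the concatenation $\Gamma=[\bar a,\dot a]\cup\hat\gamma\cup[\dot b,\bar b]$ of these edges with a geodesic $\hat\gamma$ from $\dot a$ to $\dot b$ of the shortest type is itself a geodesic of $S_n$ from $\bar a$ to $\bar b$, meeting $A$ and $B$ in full edges and passing through $\dot a$, then $\dot b$.

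The rest is an affine computation on $\Gamma$. Parametrize $\Gamma$ by arc length $\sigma\colon\Gamma\to[0,|\Gamma|]$ with $\sigma(\bar a)=0$, so that $\sigma(\dot a)=2^{-k}$ and $\sigma(\dot b)=2^{-k}+d(\dot a,\dot b)$. For $a$ on the edge $[\bar a,\dot a]$ and $b$ on $[\dot b,\bar b]$ the sub-arc of $\Gamma$ between them is a geodesic from $a$ to $b$, so $d(a,b)=\sigma(b)-\sigma(a)$ and $Z_t(a,b)$ contains the point of $\Gamma$ with $\sigma$-value $(1-t)\sigma(a)+t\sigma(b)$. As $(a,b)$ ranges over $[\bar a,\dot a]\times[\dot b,\bar b]$, the values $\sigma(a)\in[0,2^{-k}]$ and $\sigma(b)\in[2^{-k}+d(\dot a,\dot b),|\Gamma|]$ vary independently, so $(1-t)\sigma(a)+t\sigma(b)$ sweeps out an interval of length $(1-t)2^{-k}+t(|\Gamma|-2^{-k}-d(\dot a,\dot b))=(1-t)2^{-k}+t2^{-m}$. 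Thus $Z_t(A,B)$ contains a sub-arc of the geodesic $\Gamma$ of that length, and as $\sigma$ is an isometry this gives $\mathcal{H}^1(Z_t(A,B))\ge(1-t)2^{-k}+t2^{-m}$ for every $t\in[0,1]$. Sharpness follows from the cell-to-cell analysis: when $A$ and $B$ are additionally far enough apart to admit a regular common path $\hat\gamma$, the proof of Lemma~\ref{commute-lemma} identifies $Z_t(A,B)$ for $t\in[t_1^f,t_2^i]$ with the interval $[x_1,x_2]\subseteq\hat\gamma$ of length exactly $d(x_1,x_2)=(1-t)2^{-k}+t2^{-m}$, so equality holds there.

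I expect the one step needing genuine care to be the construction of the spine $\Gamma$, namely checking that the $P_1$- or $P_2$-type minimizing $d(\bar a,\bar b)$ also minimizes $d(\dot a,\dot b)$ and that the concatenation has length $d(\bar a,\bar b)$. This is a short but slightly fiddly computation with the length formula of Proposition~\ref{dist-bary} and the barycentric coordinates of the corners and bridge points involved; everything after that is the elementary bookkeeping above.
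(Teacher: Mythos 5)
Your proof is correct and follows essentially the same route as the paper's: both exhibit a single geodesic running from a far corner of $A$ to a far corner of $B$ whose intersections with $A$ and $B$ are full edges of lengths $2^{-|v|}$ and $2^{-|w|}$, observe that the interpolants of pairs of points on these edges sweep out a subinterval of that geodesic of length $(1-t)2^{-|v|}+t2^{-|w|}$, and then convert to measures via $\mu_n(A)^{\log 2/\log(n+1)}=2^{-|v|}$. The only difference is that the paper obtains such a geodesic at once by taking a pair $(a,b)\in A\times B$ maximizing $d(a,b)$, whereas you build it explicitly from the $P_1$/$P_2$ classification; the sharpness argument is the same in both.
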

\begin{proof}
Take $a\in A$ and $b\in B$ so that $d(a,b)$ is maximal.  The geodesic from $a$ to $b$ passes through boundary points $\dot a\in A$ and $\dot b\in B$, with $d(a,\dot a)= 2^{-|v|}$ and $d(b,\dot b)= 2^{-|w|}$.  Then $Z_t(A,B)$ contains an interval along this geodesic, and it is easy to compute a lower bound for its length, which gives
\begin{equation*}
	\mathcal{H}_1(Z_t(A,B))\geq (1-t)2^{-|v|}+t2^{-|w|}.
	\end{equation*}
However $\mu_n(A)=(n+1)^{-|v|}$ and $\mu_n(B)= (n+1)^{-|w|}$, from which the assertion is immediate.  Sharpness occurs when $Z_t$ is equal to this interval, which is true provided $t_1^f<t<t_2^i$; this can be arranged by suitably choosing $A$ and $B$.
\end{proof}

If $A$ and $B$ are connected but are not cells then we can take minimal cells $\br{v}\supset A$ and $\br{w}\supset B$. Provided $\br{v}$ and $\br{w}$ are disjoint and joined by a common path from $\dot a$ to $\dot b$ our reasoning from the the proof of Proposition~\ref{prop:ineqcellcase} is still useful, but the lower bound for the $\mathcal{H}_1$ measure must now also involve the sizes of the intervals $\varphi_{\dot a}(A)$ and $\varphi_{\dot b}(B)$ obtained by barycentric projection. (Note that these are intervals because $A$ and $B$ are connected.)  Indeed, the geodesic between the maximally separated points $a\in A$ and $b\in B$ begins with a path of length at least $2^{-|v|}\H_1(\varphi_{\dot a}(A))$ in $A$ and ends with one of length at least $2^{-|w|}\H_1(\varphi(B))$ in $B$, so that
\begin{equation}\label{eqn:lowerboundonH1Zt}
	\mathcal{H}^1(Z_t(A,B))\geq (1-t)2^{-|v|}\H_1(\varphi_{\dot a}(A))+t2^{-|w|}\H_1(\varphi_{\dot b}(B))
\end{equation}
and in order to proceed we must bound  $\H_1(\varphi_{\dot a}(A))$ from below using $\mu_n(F_v^{-1}A)$. It is obvious that $\mu_n(F_v^{-1}(A))\leq \nu_n(\varphi_{\dot a}(A))$.   To compare  $\H_1(\varphi_{\dot a}(A))$ and $\nu_n(\varphi_{\dot a}(A))$  we establish some lemmas; the conclusion of our reasoning regarding a lower bound for $\H_1(Z_t(A,B))$ when $A$ and $B$ are connected is in Theorem~\ref{thm:inequality}.

\begin{lemma} \label{lem:fat-nu}
	If $[a,a+x]\subseteq[0,1]$ then $\nu_n([0,x])\geq \nu_n([a,a+x])\geq \nu_n([1-x,1])$.
\end{lemma}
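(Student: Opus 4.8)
\emph{Proof proposal.} The plan is to prove the slightly more symmetric statement that among all subintervals of $[0,1]$ of a fixed length $\ell$, the interval $[0,\ell]$ carries the largest $\nu_n$-measure and $[1-\ell,1]$ the smallest; the lemma is then immediate. Write $p=\tfrac{n}{n+1}$, $q=\tfrac1{n+1}$, so $p\ge q$, and recall from Lemma~\ref{lem:nunss} that $\nu_n$ is the sum of $p\,(\tilde F_1)_*\nu_n$, supported on $[0,\tfrac12]$, and $q\,(\tilde F_0)_*\nu_n$, supported on $[\tfrac12,1]$, where $\tilde F_1(x)=x/2$ and $\tilde F_0(x)=(x+1)/2$. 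Thus for $I\subseteq[0,\tfrac12]$ one has $\nu_n(I)=p\,\nu_n(2I)$, for $I\subseteq[\tfrac12,1]$ one has $\nu_n(I)=q\,\nu_n(2I-1)$, and for $I=[a,b]$ with $a<\tfrac12<b$ one has $\nu_n(I)=p\,\nu_n([1-u,1])+q\,\nu_n([0,v])$ with $u=1-2a$, $v=2b-1$, $u+v=2|I|$. I would first reduce to intervals with dyadic rational endpoints: since $\nu_n$ is non-atomic --- by the maximal-atom argument from the proof of Theorem~\ref{thm:nullset} applied to the self-similarity relation of Lemma~\ref{lem:nunss}, or because $\nu_n$ is the law of a biased binary expansion with independent digits --- the function $t\mapsto\nu_n([0,t])$ is continuous, so $\nu_n([a,a+\ell])$ depends continuously on $(a,\ell)$ and the general inequalities follow from the dyadic case by approximation.

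For $I=[a,b]$ with $a,b\in2^{-N}\mathbb Z$ I would induct on the least such $N$, proving $\nu_n(I)\le\nu_n([0,|I|])$ and $\nu_n(I)\ge\nu_n([1-|I|,1])$ \emph{simultaneously}; the base case $N=0$ is trivial. For the inductive step one splits into three cases by the position of $I$ relative to $\tfrac12$. If $I\subseteq[0,\tfrac12]$ then $2I$ has strictly smaller depth, and combining the inductive hypothesis for $2I$ with the identities $\nu_n([0,|I|])=p\,\nu_n([0,2|I|])$ and $\nu_n([1-|I|,1])=q\,\nu_n([1-2|I|,1])$ (valid since $|I|\le\tfrac12$) together with $p\ge q$ yields both inequalities for $I$; the case $I\subseteq[\tfrac12,1]$ is symmetric, using $q\le p$ and $\nu_n(I)=q\,\nu_n(2I-1)$.

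The decisive case is $I=[a,b]$ with $a<\tfrac12<b$, where $\nu_n(I)=p\,\nu_n([1-u,1])+q\,\nu_n([0,v])$ as above. The measure $\nu_n([0,|I|])$ of the comparison interval $[0,\tfrac{u+v}2]$ equals $p\,\nu_n([0,u+v])$ when $u+v\le1$ and $p+q\,\nu_n([0,u+v-1])$ when $u+v>1$, and $\nu_n([1-|I|,1])$ behaves analogously; in each of the four resulting subconfigurations one bounds the two summands $p\,\nu_n([1-u,1])$ and $q\,\nu_n([0,v])$ by applying the inductive hypothesis --- \emph{both} the ``largest'' and the ``smallest'' inequality, at smaller depth --- to appropriately positioned subintervals of length $u$, $v$, or $1-u$, and then adds, absorbing the discrepancy using $p\ge q$. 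For instance, when $u+v\le1$ the ``smallest'' inequality gives $\nu_n([1-u,1])\le\nu_n([v,v+u])$, whence $\nu_n(I)\le p\,\nu_n([v,v+u])+q\,\nu_n([0,v])\le p\,\nu_n([v,v+u])+p\,\nu_n([0,v])=p\,\nu_n([0,u+v])=\nu_n([0,|I|])$; the remaining three estimates use the same mechanism.

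I expect this straddling case to be the main obstacle: it is the only place where the ``largest'' and ``smallest'' assertions genuinely depend on each other --- hence must travel through the induction as a pair --- and it requires some care to verify, in each subconfiguration, that the auxiliary intervals invoked really have strictly smaller dyadic depth and really lie inside $[0,1]$. The only other point needing attention is the reduction to dyadic endpoints, which rests on the routine non-atomicity of $\nu_n$.
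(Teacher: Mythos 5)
Your proposal is correct and follows essentially the same route as the paper's proof: reduce to dyadic endpoints via non-atomicity, induct on the dyadic scale proving both inequalities simultaneously, dispatch the cases $I\subseteq[0,\tfrac12]$ and $I\subseteq[\tfrac12,1]$ by one application of self-similarity, and in the straddling case split at $\tfrac12$ and apply both halves of the inductive hypothesis to the two pieces (your estimate $\nu_n([1-u,1])\le\nu_n([v,v+u])$ is literally the paper's $\nu_n([2a,1])\le\nu_n([2a+2x-1,2x])$). The only difference is organizational: the paper first normalizes so that $[a,a+x]$ is disjoint from $[0,x]$ and $[1-x,1]$, forcing $x\le\tfrac12$ and collapsing your four subconfigurations of the straddling case into one.
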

\begin{proof}
As $\nu_n$ is non-atomic, $\nu_n([a,a+x])$ is continuous in $a$ and $x$, and it
suffices to consider dyadic rationals of arbitrary scale $m$, so $a=\sum_{i=1}^m a_i2^{-i}$ and $x=\sum_{i=1}^m x_i 2^{-i}$.

Observe that  we can assume $x\leq a$ and $a+x\leq 1-x$, because if the intervals intersect then it suffices to prove the inequality for the complement of the intersection (for example, if $x>a$ the first inequality may be proved by showing $\nu_n([0,a])\geq \nu_n([x,a+x])$ because then $\nu_n([0,x])=\nu_n([0,a])+\nu_n([a,x])\geq \nu_n([a,x])+\nu_n([x,a+x])=\nu_n([a,a+x])$). Note in particular that this assumption provides $x\leq\frac12$.

We induct on $m$, with the easily verifiable base case $m=1$.  Supposing it is true to scale $m-1$, take $a$ and $x$ at dyadic scale $m$ and use the  self-similarity of $\nu_n$ from Lemma~\ref{lem:nunss}. If both $a$ and $a+x$ are in $[0,\frac12]$ then the scaling map is $F_0^{-1}(y)=2y$ and thus $\nu_n([a,a+x])=\frac n{n+1} \nu_n([2a,2a+2x)])$. Both $2a$ and $2a+2x$ are dyadic of scale $m-1$, so that $\nu_n([0,2x])\geq \nu_n([2a,2a+2x]) \geq \nu_n([1-2x,1])$ from the inductive assumption. We can then use the self-similarity a second time, in the reverse direction, to obtain the desired inequality. (This latter uses $x\leq\frac12$.)  The proof if both $a$ and $a+x$ are in $[\frac12,1]$ follows the same reasoning but uses $F_1^{-1}(y)=2y-1$ on $y\in[\half,1]$.

For the remaining case we have $x\leq a<\frac12<a+x\leq 1-x$, so we separate at
    $\frac12$ and use the self-similarity to write
\begin{equation}\label{eq:boundsfornunofinterval}
	\nu_n([a,a+x])
	=\nu_n\qty(\qty[a,\frac12])+\nu_n\qty(\qty[\frac12,a+x])
	=\frac n{n+1}\nu_n([2a,1])  + \frac1{n+1}\nu_n([0,2a+2x-1]).
	\end{equation}
Since $2a$ and $2a+2x-1$ are dyadic of scale $m-1$ we can apply the inductive assumption to obtain $\nu_n([2a,1])\leq \nu_n([2a+2x-1,2x])$ and $\nu_n([0,2a+2x-1])\geq \nu_n([1-2x ,2a])$.   Thus
\begin{align*}
	\nu_n([0,x])
	 = \frac n{n+1} \nu_n([0,2x])
	 &= \frac n{n+1} \Bigl( \nu_n([2a+2x-1,2x])+ \nu_n([0,2a+2x-1]) \Bigr)\\
	&\geq \frac n{n+1}\nu_n([2a,1])  + \frac1{n+1}\nu_n([0,2a+2x-1]) \\
	&\geq \frac 1{n+1} \Bigl( \nu_n([2a,1]) +  \nu_n([1-2x ,2a]) \Bigr)\\
	&= \frac1{n+1} \nu_n([1-2x,1]) =\nu_n([1-x,1]),
	\end{align*}
where the beginning and end inequalities again use the self-similarity in reverse, which uses the earlier established fact that $x\leq\frac12$. Comparing the middle term to~\eqref{eq:boundsfornunofinterval} establishes the desired inequality.
\end{proof}

Having determined that $\nu_n([a,a+x])\leq \nu_n([0,x])$, we next look for a minimal concave bounding function having the form found in the classical Brunn-Minkowski inequality.  The fact that the following function bounds $\nu_n([0,x])$ is proved in Corollary~\ref{g-ineq} and illustrated in Figure~\ref{fig:nu-cuml}.  Note that Figure~\ref{fig:nu-cuml} makes it clear this is not the minimal concave bounding function, but only the minimal one having the classical Brunn-Minkowski form.
\begin{defn} \label{def: Phi}
	Let $d_n=\frac{\log2}{\log\frac{n+1}n}$, and   $\Phi_n(x)=\qty(1-(1-x)^{d_n})^{1/d_n}$.
\end{defn}

\begin{lemma} \label{lem: Phi-ineq}
\begin{gather}
	n\Phi_n(2x) \leq (n+1) \Phi_n(x) \text{ if $x\in[0,1/2]$,} \label{eq:Phieq1}\\
	\Phi_n(2x-1)\leq (n+1)\Phi_n (x)  -n  \text{ if $x\in[1/2,1]$.} \label{eq:Phieq2}
	\end{gather}
\end{lemma}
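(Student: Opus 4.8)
The plan is to prove both inequalities by the same two moves: exponentiate to clear the outer $(\cdot)^{1/d_n}$, using the defining identity of $d_n$; then reduce to a one–variable calculus statement. Write $\alpha=d_n$. Note $\alpha\ge1$ always, with $\alpha>1$ for $n\ge2$ (which I assume; the case $n=1$ gives $\Phi_1=\mathrm{id}$ and equality throughout). The one algebraic fact I use repeatedly is that, by definition of $d_n$, $\bigl(\tfrac{n+1}{n}\bigr)^{\alpha}=2$, i.e.\ $(n+1)^{\alpha}=2n^{\alpha}$ and $\bigl(\tfrac{n}{n+1}\bigr)^{\alpha}=\tfrac12$. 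From $\Phi_n(x)^{\alpha}=1-(1-x)^{\alpha}$ one sees $\Phi_n$ is a strictly increasing bijection of $[0,1]$ fixing $0$ and $1$; and since $\alpha\ge1$ gives $2^{-\alpha}\le\tfrac12$, we get $\Phi_n(\tfrac12)^{\alpha}=1-2^{-\alpha}\ge\tfrac12=\bigl(\tfrac{n}{n+1}\bigr)^{\alpha}$, hence the estimate $\Phi_n(\tfrac12)\ge\tfrac{n}{n+1}$, which is needed in both parts.

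For \eqref{eq:Phieq1}: both sides are nonnegative and $t\mapsto t^{\alpha}$ is increasing on $[0,\infty)$, so the claim is equivalent to $n^{\alpha}\Phi_n(2x)^{\alpha}\le(n+1)^{\alpha}\Phi_n(x)^{\alpha}$. Substituting $\Phi_n^{\alpha}=1-(1-\cdot)^{\alpha}$ and dividing by $n^{\alpha}$ via $(n+1)^{\alpha}=2n^{\alpha}$, this becomes $2(1-x)^{\alpha}-(1-2x)^{\alpha}\le1$ on $[0,\tfrac12]$. Putting $u=1-x\in[\tfrac12,1]$, so $1-2x=2u-1$, the claim reads $h(u):=2u^{\alpha}-(2u-1)^{\alpha}\le1$. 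Now $h(1)=1$, and $h'(u)=2\alpha\bigl(u^{\alpha-1}-(2u-1)^{\alpha-1}\bigr)\ge0$ on $[\tfrac12,1]$ because $0\le2u-1\le u$ and $\alpha-1>0$ make $t\mapsto t^{\alpha-1}$ increasing; hence $h$ is nondecreasing and $h(u)\le h(1)=1$.

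For \eqref{eq:Phieq2}: the right side $(n+1)\Phi_n(x)-n$ is increasing in $x$ and at $x=\tfrac12$ equals $(n+1)\Phi_n(\tfrac12)-n\ge0$, so both sides are nonnegative on $[\tfrac12,1]$ and I may again raise to the power $\alpha$. Using $\Phi_n(2x-1)^{\alpha}=1-(2-2x)^{\alpha}$ and the substitution $\phi=\Phi_n(x)$, for which $(1-x)^{\alpha}=1-\phi^{\alpha}$ and hence $(2-2x)^{\alpha}=2^{\alpha}(1-\phi^{\alpha})$, the inequality becomes
\[ g(\phi):=\bigl((n+1)\phi-n\bigr)^{\alpha}+2^{\alpha}\bigl(1-\phi^{\alpha}\bigr)\ge1, \qquad \phi\in[\Phi_n(\tfrac12),1], \]
and $g(1)=1$. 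I would show $g$ is nonincreasing on this interval, giving $g(\phi)\ge g(1)=1$. Differentiating, $g'(\phi)\le0$ is equivalent to $(n+1)\bigl(\tfrac{(n+1)\phi-n}{\phi}\bigr)^{\alpha-1}\le2^{\alpha}$; on $[\Phi_n(\tfrac12),1]$ one has $0\le(n+1)\phi-n\le\phi$ (the right bound since $\phi\le1$, the left since $\phi\ge\Phi_n(\tfrac12)\ge\tfrac{n}{n+1}$), so the bracketed quantity lies in $[0,1]$ and, as $\alpha-1>0$, its $(\alpha-1)$-st power is $\le1$; thus the left side is $\le n+1$, and it suffices to know that $2^{\alpha}\ge n+1$.

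So the argument reduces to the inequality $2^{d_n}\ge n+1$, which is the hard part. In logarithmic form it says $(\log2)^2\ge\log(n+1)\log\tfrac{n+1}{n}$, and with $s=\tfrac1{n+1}\in(0,\tfrac12]$ this is $q(s):=\log s\,\log(1-s)\le(\log2)^2=q(\tfrac12)$. I would prove this by showing $q$ is nondecreasing on $(0,\tfrac12]$: one computes $q'(s)=\frac{(1-s)\log(1-s)-s\log s}{s(1-s)}$, so $\operatorname{sgn}q'=\operatorname{sgn}\Lambda$ where $\Lambda(s):=(1-s)\log(1-s)-s\log s$; and $\Lambda(0^{+})=\Lambda(\tfrac12)=0$, while $\Lambda'(s)=-\log\bigl(s(1-s)\bigr)-2$ vanishes only where $s(1-s)=e^{-2}$, which has a single root in $(0,\tfrac12)$, so $\Lambda$ rises from $0$ and returns to $0$, i.e.\ $\Lambda\ge0$ on $(0,\tfrac12]$. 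Hence $q$ is nondecreasing there, $q(s)\le q(\tfrac12)=(\log2)^2$, and unwinding gives $2^{d_n}\ge n+1$ (with equality at $n=1$, consistent with $\Phi_1=\mathrm{id}$). The only real subtleties are bookkeeping — verifying that the quantities whose $\alpha$-th powers we take are nonnegative and that each reduction is reversible — together with this last elementary but slightly delicate estimate on $q$.
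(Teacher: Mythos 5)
Your proof is correct and takes essentially the same route as the paper's: raise each inequality to the power $d_n$ using $\bigl(\tfrac{n+1}{n}\bigr)^{d_n}=2$, reduce to the monotonicity of a one-variable function anchored at an endpoint where equality holds, and invoke $d_n\ge 1$ together with $2^{d_n}\ge n+1$. The only substantive addition is your complete proof of $2^{d_n}\ge n+1$ via the monotonicity of $\log s\,\log(1-s)$ on $(0,\tfrac12]$ --- a fact the paper merely asserts (and, in its proof of the second inequality, writes with the inequality reversed as $2^{d_n}\le n+1$, an apparent typo, since the justification cited there, that $2^{-d_n}(n+1)$ is decreasing in $n$ and equals $1$ at $n=1$, gives exactly $2^{d_n}\ge n+1$).
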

\begin{proof}
Dividing both sides of~\eqref{eq:Phieq1} and taking the $d_n$ power we find it is equivalent to
\[    A_1(x)= 1-(1-2x)^{d_n}\leq  2\bigl(1-\qty(1-x)^{d_n}\bigr)=A_2(x).\]
We have $A_1(0)=0=A_2(0)$. Moreover $A_1'(x)=2d_n(1-2x)^{d_n-1}\leq 2d_n(1-x)^{d_n-1}=A_2'(x)$ because  $0\leq1-2x\leq 1-x$ and $d_n-1\ge 0$. The inequality~\eqref{eq:Phieq1} follows.

The inequality~\eqref{eq:Phieq2} is equivalent to
\begin{equation*}
	A_3(1-x) = (1-(2(1-x))^{d_n})^{1/d_n} \leq (n+1)(1-(1-x)^{d_n})^{1/d_n}-n =A_4(1-x)
	\end{equation*}
for $y=1-x\in[0,1/2]$. We have $A_3(0)=1=A_4(0)$ and compare derivatives as follows:
\begin{equation*}
	A_3'(y) = ( 1-(2y)^{d_n})^{\frac1{d_n}-1} 2^{d_n}y^{d_n-1}
	\leq (n+1) (1-y^{d_n})^{\frac1{d_n}-1} y^{d_n-1}
	=A_4'(y)
	\end{equation*}
because $2^{d_n}\leq(n+1)$ and $d_n\geq1$ gives both $0\leq 1-y^{d_n}\leq 1-(2y)^{d_n}$ on $[0,1/2]$ and $\frac1{d_n}-1\leq 0$. The former    is easily checked using the fact that $2^{-d_n}(n+1)$ is decreasing in $n$ and equal to $1$ when $n=1$.  Thus $A_3(y)\leq A_4(y)$ on $[0,1/2]$ and this establishes~\eqref{eq:Phieq2}.
\end{proof}

\begin{cor} \label{g-ineq}
$\nu_n([0,x]) \le \Phi_n(x) = \qty(1-(1-x)^{d_n})^{1/d_n}$ on $[0,1]$.
\end{cor}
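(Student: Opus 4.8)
The plan is to read $\nu_n([0,x])$ as the distribution function of the self-similar measure $\nu_n$ and to compare it with $\Phi_n$ by a maximum principle, using the self-similarity of Lemma~\ref{lem:nunss} together with the functional inequalities for $\Phi_n$ in Lemma~\ref{lem: Phi-ineq}. Set $g(x)=\nu_n([0,x])$; since $\nu_n$ is non-atomic, $g$ is continuous on $[0,1]$ with $g(0)=0$ and $g(1)=1$. Evaluating the identity $\nu_n=\frac1{n+1}\nu_n\circ\tilde F_0^{-1}+\frac n{n+1}\nu_n\circ\tilde F_1^{-1}$ on $[0,x]$, and recalling that $\tilde F_0(y)=\frac12(y+1)$ has image $[\frac12,1]$ while $\tilde F_1(y)=\frac12 y$ has image $[0,\frac12]$, one finds: for $x\in[0,\frac12]$ the $\tilde F_0$-term vanishes, since $\tilde F_0^{-1}([0,x])$ is at most a point and $\nu_n$ is non-atomic, so $g(x)=\frac n{n+1}g(2x)$; for $x\in[\frac12,1]$, splitting $[0,x]$ at $\frac12$ and treating $[\frac12,x]$ the same way gives $g(x)=\frac n{n+1}+\frac1{n+1}g(2x-1)$.

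Next I would note that $\Phi_n$ satisfies these same two relations with $\ge$ replacing $=$. Rearranging Lemma~\ref{lem: Phi-ineq}, inequality~\eqref{eq:Phieq1} reads $\Phi_n(x)\ge\frac n{n+1}\Phi_n(2x)$ on $[0,\frac12]$ and inequality~\eqref{eq:Phieq2} reads $\Phi_n(x)\ge\frac n{n+1}+\frac1{n+1}\Phi_n(2x-1)$ on $[\frac12,1]$; moreover $\Phi_n$ is continuous with $\Phi_n(0)=0$ and $\Phi_n(1)=1$, matching $g$ at the endpoints. Thus $\Phi_n$ is a continuous supersolution of the recursion solved by $g$.

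Finally I would close the argument by a maximum principle. Let $h=g-\Phi_n$, continuous on $[0,1]$ with $h(0)=h(1)=0$, and put $M=\max_{[0,1]}h$. Suppose $M>0$ and choose $x_\ast$ with $h(x_\ast)=M$; since $h$ vanishes at the endpoints we have $x_\ast\in(0,1)$. If $x_\ast\le\frac12$, then $g(x_\ast)=\frac n{n+1}g(2x_\ast)$ and $\Phi_n(x_\ast)\ge\frac n{n+1}\Phi_n(2x_\ast)$ give $M=h(x_\ast)\le\frac n{n+1}h(2x_\ast)\le\frac n{n+1}M$, forcing $M\le0$; if $x_\ast\ge\frac12$, the other pair of relations gives $M\le\frac1{n+1}h(2x_\ast-1)\le\frac1{n+1}M$, again forcing $M\le0$. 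Either outcome contradicts $M>0$, so $M\le0$ and hence $g\le\Phi_n$ on $[0,1]$, as claimed. I expect the only delicate step to be the first one: correctly extracting the two functional equations for $g$ from the self-similar identity (where non-atomicity of $\nu_n$ discards the cross terms) and checking that the rearranged inequalities of Lemma~\ref{lem: Phi-ineq} point the right way for the comparison; the contraction estimates in the maximum-principle step are then immediate.
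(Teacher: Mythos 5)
Your argument is correct, and it shares its two essential ingredients with the paper's proof: the self-similar functional equations $g(x)=\tfrac n{n+1}g(2x)$ on $[0,\tfrac12]$ and $g(x)=\tfrac n{n+1}+\tfrac1{n+1}g(2x-1)$ on $[\tfrac12,1]$ for $g(x)=\nu_n([0,x])$ (which you extract correctly from Lemma~\ref{lem:nunss}, using non-atomicity to kill the cross terms), and the rearranged inequalities of Lemma~\ref{lem: Phi-ineq} showing $\Phi_n$ is a supersolution of the same recursion. Where you diverge is in how the comparison $g\le\Phi_n$ is globalized. The paper inducts on the level of dyadic rationals, starting from the base case $\{0,1\}$ and propagating the inequality one scale at a time via $\nu_n([0,x])=\tfrac n{n+1}\nu_n([0,2x])\le\tfrac n{n+1}\Phi_n(2x)\le\Phi_n(x)$ (and its counterpart for $x>\tfrac12$), then passes to all $x$ by continuity. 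You instead run a maximum principle on $h=g-\Phi_n$: if the maximum $M$ were positive, the functional equation and supersolution inequality at a maximizer would give $M\le\tfrac n{n+1}M$ or $M\le\tfrac1{n+1}M$, a contradiction. Your route avoids the dyadic approximation and restriction to a dense set entirely, at the cost of needing continuity of $g$ up front (to guarantee the maximum is attained) rather than only at the end; the paper's induction is marginally more elementary in that it never invokes an extremal point. Both are clean; the underlying mechanism --- a supersolution dominates the exact solution of a contractive self-similar recursion --- is the same.
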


\begin{proof}
As in the proof of Lemma~\ref{lem:fat-nu} it is sufficient (by continuity of the functions) to prove this for dyadic rational $x$, which we do by induction on the degree $m$ of the dyadic rational. The base case is $m=0$ where the equalities $\nu_n(\{0\})=0=\Phi_n(0)$ and $\nu_n([0,1])=1=\Phi_n(1)$ are immediate. If $x=k2^{-m}$ is a dyadic rational we use the self-similarity of $\nu_n$ from Lemma~\ref{lem:nunss}, then the fact that $2x$ and $2x-1$ are dyadic rationals of lower degree so satisfy the inequality by induction, and finally Lemma~\ref{lem: Phi-ineq}, to compute
\begin{equation*}	
	\nu_n([0,x])= \begin{cases}
		\frac n{n+1}\nu_n([0,2x]) \leq\frac n{n+1} \Phi_n(2x)\leq \Phi_n(x) &\text{ if $x\leq\frac12$, }\\
		\frac n{n+1}+ \frac1{n+1}\nu_n([0,2x-1]) \leq \frac n{n+1}+ \frac1{n+1}\Phi_n(2x-1)\leq  \Phi_n(x)  &\text{ if $x>\frac12$. }
		\end{cases}\qedhere
	\end{equation*}
\end{proof}

\begin{figure}[ht]
	\centering
	\includegraphics[width=2.5in]{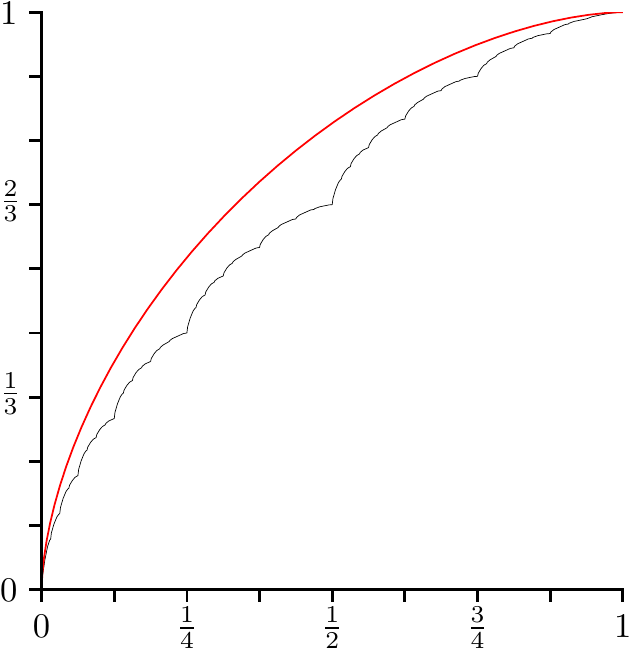}
    \caption{The cumulative measure $\nu_2([0,x])$ (in black) is bounded by $\Phi_2(x)$ (in red).}
    \label{fig:nu-cuml}
\end{figure}

\begin{thm}\label{thm:inequality}
Let $A,B\subset S_n$ be connected sets contained in minimal disjoint cells, $A\subset\br{v}$, $B\subset\br{w}$, and suppose there is a common path between boundary points $\dot a\in\br{v}$ and $\dot b\in\br{w}$.  For all $t\in(0,1)$,
\begin{equation*}
	1- \bigl(1-\H_1(Z_t(A,B))\bigr)^{d_n}
	\geq (1-t)\mu_n(A)^{\log 2/\log (n+1)} + t\mu_n(B)^{\log 2/\log(n+1)}.
	\end{equation*}
\end{thm}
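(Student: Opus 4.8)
The plan is to run the argument whose first step is already isolated in the paragraphs preceding the statement: begin from the geometric lower bound \eqref{eqn:lowerboundonH1Zt},
\[
	\H^1(Z_t(A,B))\ \ge\ (1-t)\,2^{-|v|}\H_1(\varphi_{\dot a}(A))\ +\ t\,2^{-|w|}\H_1(\varphi_{\dot b}(B)),
\]
and then convert each factor $2^{-|v|}\H_1(\varphi_{\dot a}(A))$ into the term $\mu_n(A)^{\log 2/\log(n+1)}$ that must appear on the right, absorbing the distortion into $\Phi_n$. Two elementary facts do the conversion: first, $(n+1)^{\log 2/\log(n+1)}=2$ together with $\mu_n(A)=(n+1)^{-|v|}\mu_n(F_v^{-1}A)$ gives $\mu_n(A)^{\log 2/\log(n+1)}=2^{-|v|}\mu_n(F_v^{-1}A)^{\log 2/\log(n+1)}$; second, the chain $\mu_n(F_v^{-1}A)\le\nu_n(\varphi_{\dot a}(A))\le\nu_n\bigl([0,\H_1(\varphi_{\dot a}(A))]\bigr)\le\Phi_n\bigl(\H_1(\varphi_{\dot a}(A))\bigr)$, whose first step is the observation recorded just before Lemma~\ref{lem:fat-nu}, whose middle step is Lemma~\ref{lem:fat-nu} (applicable because $A$ connected forces $\varphi_{\dot a}(A)$ to be an interval), and whose last step is Corollary~\ref{g-ineq}. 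Since $\Phi_n$ is strictly increasing this yields $\H_1(\varphi_{\dot a}(A))\ge\Phi_n^{-1}\bigl(\mu_n(F_v^{-1}A)\bigr)$, equivalently $1-(1-\H_1(\varphi_{\dot a}(A)))^{d_n}\ge\mu_n(F_v^{-1}A)^{d_n}$, and similarly for $B$.

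With this bookkeeping in place the problem reduces to a one–dimensional inequality: writing $z=\H^1(Z_t(A,B))$, $x=\H_1(\varphi_{\dot a}(A))$, $y=\H_1(\varphi_{\dot b}(B))$, it suffices to show that $z\ge(1-t)2^{-|v|}x+t\,2^{-|w|}y$ together with the two $\Phi_n$–relations forces $1-(1-z)^{d_n}\ge(1-t)\mu_n(A)^{\log 2/\log(n+1)}+t\,\mu_n(B)^{\log 2/\log(n+1)}$. Writing $f(z)=1-(1-z)^{d_n}$ and noting $d_n>1$, I would use that $f$ is concave with $f(0)=0$ — so $f(z)\ge z$ on $[0,1]$ and $(1-t)f(u)+t f(v)\le f\bigl((1-t)u+t v\bigr)$, with weights that may even be shrunk by $2^{-|v|}$ and $2^{-|w|}$ — and dually that $f^{-1}\colon P\mapsto 1-(1-P)^{1/d_n}$ is convex, in order to split the estimate into an $A$-part and a $B$-part. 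This reduces matters to a single scalar inequality of the shape $2^{-|v|}x\ge 1-(1-\mu_n(A)^{\log 2/\log(n+1)})^{1/d_n}$, which I would deduce from $\Phi_n(x)\ge(n+1)^{|v|}\mu_n(A)$ using monotonicity of $\Phi_n$ and the arithmetic relating $d_n$ to $\log 2/\log(n+1)$ — in particular the inequality $2^{d_n}\le n+1$ already exploited in the proof of Lemma~\ref{lem: Phi-ineq}, and the fact that $|v|,|w|\ge 1$ (forced, since disjoint cells cannot both equal $S_n$). As a sanity check, when $A=\br{v}$ and $B=\br{w}$ are cells one has $\H_1(\varphi_{\dot a}(A))=1$ and $\mu_n(A)=(n+1)^{-|v|}$, so $2^{-|v|}\H_1(\varphi_{\dot a}(A))=\mu_n(A)^{\log 2/\log(n+1)}$, hence \eqref{eqn:lowerboundonH1Zt} already gives $z\ge(1-t)\mu_n(A)^{\log 2/\log(n+1)}+t\,\mu_n(B)^{\log 2/\log(n+1)}$, and $f(z)\ge z$ closes the case, recovering Proposition~\ref{prop:ineqcellcase}.

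The delicate point is this last scalar inequality. The bound \eqref{eqn:lowerboundonH1Zt} is essentially sharp — for $t\in[t_1^f,t_2^i]$ the set $Z_t(A,B)$ is literally an interval of length $(1-t)2^{-|v|}x+t\,2^{-|w|}y$ on the common path — so there is no geometric slack to spend: every bit of gain must come from the concavity of $f$ on the left being strong enough to absorb both the distortion that $\Phi_n$ introduces when passing from $\mu_n(A)$ to $\H_1(\varphi_{\dot a}(A))$ and the rescaling by $2^{-|v|}$. I expect the cleanest route is to prove the scalar inequality first at $t=0$ and $t=1$, where it becomes a pure statement about $\Phi_n$ and the exponent $\log 2/\log(n+1)$, and then interpolate in $t$ using the concavity/convexity above, after checking that the two endpoint cases leave enough room.
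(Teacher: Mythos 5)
You follow the paper's own route essentially step for step: start from \eqref{eqn:lowerboundonH1Zt}, run each term through the chain $(n+1)^{|v|}\mu_n(A)\le\nu_n(\varphi_{\dot a}(A))\le\nu_n\bigl([0,\mathcal{H}_1(\varphi_{\dot a}(A))]\bigr)\le\Phi_n\bigl(\mathcal{H}_1(\varphi_{\dot a}(A))\bigr)$ (Lemma~\ref{lem:fat-nu} plus Corollary~\ref{g-ineq}, exactly as in the paper), and combine using concavity and monotonicity of $f(z)=1-(1-z)^{d_n}$. Your reduction to the single scalar inequality $f\bigl(2^{-|v|}x\bigr)\ge\mu_n(A)^{\log 2/\log(n+1)}$, with $x=\mathcal{H}_1(\varphi_{\dot a}(A))$, is correct and is precisely where the paper also lands. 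The gap is that you never prove this scalar inequality (``I would deduce\dots'', ``I expect the cleanest route\dots''), and it does not follow from the ingredients you list. Set $M=(n+1)^{|v|}\mu_n(A)$ and $d_n'=\log 2/\log(n+1)$, so the target is $f(2^{-|v|}x)\ge 2^{-|v|}M^{d_n'}$. Your chain yields only $M\le\nu_n([0,x])\le\Phi_n(x)$, from which (using $\nu_n([0,2^{-|v|}x])=(\tfrac{n}{n+1})^{|v|}\nu_n([0,x])$ and $(\tfrac{n}{n+1})^{d_n}=\tfrac12$) the best available bound is $f(2^{-|v|}x)\ge 2^{-|v|}M^{d_n}$. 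Since $M\le 1$ and $d_n>1>d_n'$, one has $M^{d_n}\le M^{d_n'}$ — the comparison runs the wrong way, and this is an exponent mismatch ($f(2^{-|v|}x)=O(x)$ for small $x$, while $2^{-|v|}\Phi_n(x)^{d_n'}$ is of order $x^{d_n'/d_n}$), so it cannot be absorbed by $2^{d_n}$ versus $n+1$ or by $|v|\ge 1$.

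The obstruction is not an artifact of the bookkeeping: the extremal configuration ($x$ small, $M=\nu_n([0,x])$ exactly) is realized by admissible sets, and there the statement itself fails. For $n=2$ take $\br{v}=\br{11}$, $\dot a=\br{11\bar 2}$, and $A=\bigcup_{u\in\{0,1\}^k}\br{11u}$: this is connected (it contains the edge from $\br{11\bar 0}$ to $\br{11\bar 1}$), has minimal cell $\br{11}$, $\mathcal{H}_1(\varphi_{\dot a}(A))=2^{-k}$ and $\mu_2(A)=2^k3^{-2-k}$. With $B$ the mirror image in $\br{22}$ and $t=\tfrac12$ one computes $\mathcal{H}_1(Z_{1/2}(A,B))=2^{-2-k}$, so the left side of the theorem is at most $d_2\,2^{-2-k}$ while the right side is $2^{-2-k(1-d_2')}$, which is strictly larger already for $k=2$ ($0.104$ versus $0.150$). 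So the scalar inequality you defer is genuinely false, and your endpoint-plus-interpolation plan would discover this at $t=0$. For what it is worth, the paper's own proof stalls at the very same point: it reduces to comparing $\mu_n(\br{v})^{d_n'-d_n}\mu_n(A)^{d_n}$ with $\mu_n(A)^{d_n'}$ and asserts $d_n'-d_n>0$, but in fact $d_n'<1<d_n$, so that step also reverses the inequality unless $\mu_n(A)=\mu_n(\br{v})$. The argument is sound exactly in the case you verify, $A=\br{v}$ and $B=\br{w}$, which is Proposition~\ref{prop:ineqcellcase}; beyond that, your (correct) instinct that ``the delicate point is this last scalar inequality'' should be taken as a sign that the inequality, as stated for general connected sets, needs a weaker right-hand side (for instance one phrased in terms of $\mu_n(\br{v})$ and $\mu_n(\br{w})$) rather than a cleverer proof.
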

\begin{proof}
The function $\Phi_n^{d_n}$ is concave, so applying it to both sides of~\eqref{eqn:lowerboundonH1Zt} gives
\begin{equation*}
	\Phi_n^{d_n}\bigl( \mathcal{H}_1(Z_t(A,B)) \bigr)
	\geq (1-t) \Phi_n^{d_n}\bigl( 2^{-|v|}\mathcal{H}_1(\varphi_{\dot a} (A))\bigr) + 
		t \Phi_n^{d_n}\bigl( 2^{-|w|} \mathcal{H}_1(\varphi_{\dot b} (B))\bigr).
	\end{equation*}
However we saw in Corollary~\ref{g-ineq} that $\Phi_n(x)$ bounds $\nu_n([0,x])$ and in Lemma~\ref{lem:fat-nu} that $\nu_n([0,x])$ bounds $\nu_n$ of any interval of this length. The latter bound applies to $\varphi_{\dot a}(A)$, which is an interval by the connectedness of $A$.  Also using that $\nu_n([0,2^{-m}x])=\bigl(\frac n{n+1}\bigr)^{m}\nu_n([0,x])$ from the self-similarity of $\nu_n$, we have
\begin{equation*}
	\Phi_n\bigl(2^{-|v|}\mathcal{H}_1(\varphi_{\dot a} (A))\bigr)
	\geq \nu_n\bigl(\bigl[0,2^{-|v|}\mathcal{H}_1(\varphi_{\dot a} (A))\bigr]\bigr)
	\geq \Bigl(\frac n{n+1}\Bigr)^{|v|} \nu_n( \varphi_{\dot a} (A)).
	\end{equation*}
But we also know $\nu_n( \varphi_{\dot a} (A))\geq (n+1)^{|v|} \mu_n(A) =\mu_n(A)/\mu_n(\br{v})$ because the discussion following Definition~\ref{def:nu} showed $\nu_n=(n+1)^{|v|}\mu_n\circ \varphi_{\dot a}^{-1}$.  Using this and the definition of $d_n$ we obtain
\begin{equation*}
	\Phi_n^{d_n}\bigl(2^{-|v|}\mathcal{H}_1(\varphi_{\dot a} (A))\bigr)
	\geq  \Bigl(\frac n{n+1}\Bigr)^{|v|d_n }\Bigl(\frac{\mu_n(A)}{\mu_n(\br{v})}\Bigr)^{d_n}
	=2^{-|v|} \Bigl(\frac{\mu_n(A)}{\mu_n(\br{v})}\Bigr)^{d_n}.
	\end{equation*}
 A similar bound applies for $B$, so we have
\begin{align*}
	\Phi_n^{d_n}\bigl( \mathcal{H}_1(Z_t(A,B)) \bigr)
	&\geq (1-t) 2^{-|v|}\Bigl( \frac{\mu_n(A)}{\mu_n(\br{v})}\Bigr)^{d_n} + 
		t2^{-|w|} \Bigl( \frac{\mu_n(B)}{\mu_n(\br{w})}\Bigr)^{d_n}\\
	&= (1-t) \mu(\br{v})^{d_n'-d_n} \mu_n(A)^{d_n} + 
		t  \mu_n(\br{w})^{d_n'-d_n} \mu_n(B)^{d_n}
	\end{align*}
with $d_n'=\frac{\log 2}{\log (n+1)}$. The fact that $d_n'-d_n=\frac{\log2\log n}{\log (\frac{n+1}n)\log(n+1)}>0$ and both $\mu_n(\br{v})\geq\mu_n(A)$ and  $\mu_n(\br{w})\geq\mu_n(B)$ then gives the result.
\end{proof}

\section{Acknowledgements}
The authors are grateful for constructive discussions with Malcolm Gabbard,
Gamal Mograby, Sweta Pandey, and Alexander Teplyaev.  This project was supported
by the REU site grant NSF DMS~1659643.

  
\bibliography{refs}

\end{document}